\newtheorem{theorem}{Theorem}[section]
\newtheorem{lemma}[theorem]{Lemma}
\newtheorem{proposition}{Proposition}[section]
\newtheorem{remark}{Remark}[section]
\newtheorem{definition}{Definition}[section]
\newtheorem{example}{Example}[section]
\def\ddefloop#1{\ifx\ddefloop#1\else\ddef{#1}\expandafter\ddefloop\fi}
\def\ddef#1{\expandafter\def\csname bb#1\endcsname{\ensuremath{\mathbb{#1}}}}
\def\ddef#1{\expandafter\def\csname ff#1\endcsname{\ensuremath{\mathfrak{#1}}}}
\def\ddef#1{\expandafter\def\csname cc#1\endcsname{\ensuremath{\mathcal{#1}}}}
\newcommand{\cart}{\ar@{}[dr]|{\Box}}
\newcommand{\cs}{{h^1}/{h^0}}
\newcommand{\si}{\sigma}
\newcommand{\bC}{\mathbb{C}}
\newcommand{\bE}{\mathbb{E}}
\newcommand{\bZ}{\mathbb{Z}}
\newcommand{\bA}{\mathbb{A}}
\newcommand{\bT}{\mathbb{T}}
\newcommand{\bL}{\mathbb{L}}
\newcommand{\bP}{\mathbb{P}}
\newcommand{\Ad}{\mathrm{Ad}}
\newcommand{\End}{\mathrm{End}}
\newcommand{\Ext}{\mathrm{Ext}}
\newcommand{\Hom}{\mathrm{Hom}} 
\newcommand{\pr}{\mathrm{pr}}
\newcommand{\Id}{\mathrm{Id}}
\newcommand{\Crit}{\mathrm{Crit}}
\newcommand{\vir}{ {\mathrm{vir}} }
\newcommand{\Ob}{ \mathrm{Ob}}
\newcommand{\ev}{\mathrm{ev}} 
\newcommand{\Spec}{\mathrm{Spec}}
\newcommand{\Sym}{\mathrm{Sym}}
\newcommand{\Vb}{\mathrm{Vb}}
\newcommand{\fix}{\mathrm{fix}}
\newcommand{\mov}{\mathrm{mov}}
\newcommand{\tw}{\mathrm{tw}}
\newcommand{\age}{\mathrm{age}}
\renewcommand{\deg}{\mathrm{deg}}
\newcommand{\Aut}{\mathrm{Aut}}
\newcommand{\fgl}{\mathfrak{gl}}
\newcommand{\fB}{\mathfrak{B}}
\newcommand{\fC}{\mathfrak{C}}
\newcommand{\fE}{\mathfrak{E}}
\newcommand{\fM}{\mathfrak{M}}
\newcommand{\fS}{\mathfrak{S}}
\newcommand{\fP}{\mathfrak{P}}
\newcommand{\fV}{\mathfrak{V}}
\newcommand{\fX}{\mathfrak{X}}
\newcommand{\fZ}{\mathfrak{Z}}
\newcommand{\Bun}{\mathfrak{B}}
\newcommand{\fN}{\mathfrak{N}}
\newcommand{\cE}{\mathcal{E}}
\newcommand{\cO}{\mathcal{O}}
\newcommand{\cP}{\mathcal{P}}
\newcommand{\cZ}{\mathcal{Z}}
\newcommand{\cL}{\mathcal{L}}
\renewcommand{\ccX}{\mathcal{X}}
\newcommand{\tW}{\widetilde{W}}
\newcommand{\tE}{\widetilde{E}}
\newcommand{\lra}{\longrightarrow}
\newcommand{\fixd}{\mathrm{fix}}
\newcommand{\movn}{\mathrm{mov}}
\title{Moduli of stable maps with fields}
\author{Renata Picciotto}
\begin{document}
\maketitle
\begin{abstract}
Given  a triple $(X,E,s)$ of a smooth projective variety, a rank $r$ vector bundle and a regular section, we construct a moduli of stable maps to $X$ with fields together with a cosection localized virtual class. We show the class coincides up to a sign with the virtual fundamental class on the moduli space of stable maps to $Z=Z(s)\subset X$. We show that this gives a generalization of the Quantum Lefschetz hyperplane principle. We further generalize this result by considering $(\ccX,\ccE,s)$ where $\ccX$ is a smooth Deligne--Mumford stack with projective coarse moduli space. In this setting, we can construct a moduli space of twisted stable maps to $\ccX$ with fields, with components of constant virtual dimension indexed by $n$-tuples of components of the inertia stack of $\ccX$. This generalizes similar comparison results of Chang--Li, Kim--Oh and Chang--Li and presents a different approach from Chen--Janda--Webb. 

\end{abstract}

\tableofcontents

\section{Introduction}

The Quantum Lefschetz (QL) principle, as conjectured by Cox--Katz--Lee \cite{conj} and proved by Kim--Kresch-- Pantev in \cite{KKP}, is a statement about the functoriality of virtual fundamental classes. Let $Z=\{s=0\}\subset X$ be smooth projective varieties with $s$ a regular section of a convex vector bundle $E$ on $X$, that is one for which $H^1(C,f^*E)=0$ for any $f:C\to X$ for any genus 0 stable map, then the $g=0$ Gromov--Witten invariants of $Z$ can be obtained from those of $X$ by computing the Euler class of a vector bundle derived from $E$. More precisely:
\[
\sum_{i_*\beta'=\beta}\iota_{\beta' *}[\overline{\ccM}_{0,n}(Z,\beta')]^\vir=c_{\mathrm{top}}(\ccV_{\beta,n})\cap[\overline{\ccM}_{0,n}(X,\beta)]^{\vir}.
\]
The bundle $\ccV_{\beta,n}$ on $\overline{\ccM}_{0,n}(X,d)$ is obtained by pulling back $E$ through the universal evaluation map $ev_{n+1}: \overline{\ccM}_{0,n+1}(X,\beta)\to X$ and pushing it forward by the forgetful morphism $\pi_{n+1}:\overline{\ccM}_{0,n+1}(X,\beta)\to \overline{\ccM}_{0,n}(X,\beta)$.

Some similar results in genus 1 have been formulated for reduced Gromov--Witten invariants in \cite{li2005genus} and \cite{chang2015algebraic}, but a straight-forward generalization the above ``hyperplane property" fails for higher genus Gromov--Witten invariants because in general $\ccV_{d,n}$  fails to be a vector bundle.  

The idea for a different approach originated in the physics literature due to the work of Guffin--Sharpe \cite{GS}.  At the level of target spaces, the section $s$ of $E$ gives a cosection $W:E^{\vee}\to\bC$ with critical locus $Z$. They consider a theory of $A$-twisted Landau--Ginzburg model defined by $(X,E,s)$ and compare it to the usual Gromov--Witten theory of $Z$. Mathematically, we would like to consider some non-proper moduli space $\fX^E$ playing the role of $\Vb(E^{\vee})$ and a virtual fundamental class that is the analog of the class of the degeneracy locus $\bbD(\sigma)\in A_*(Z)$. The difficult part is to compare $[\fX^E]^{\vir}_{\sigma}$ to the usual fundamental class of $\fZ$.
This was undertaken by Chang--Li for the case of the quintic threefold $Q$. In \cite{CL} they propose a new moduli space 
\[
\overline{\ccM}_g(\bP^4,d)^p = \{ [u,C,p] | [u,C]\in\overline{\ccM}_g(\bP^4,d), p\in\Gamma(C,u^*\cO_{\bP^4}(-5)\otimes\omega_C)\}/\sim
\]
of stable maps to $\bP^4$ with $p$-fields. This moduli space is a cone over the usual stable maps to $\bP^4$ and, although not proper, contains the proper moduli space $\overline{\ccM}_g(Q,d)$ as the degeneracy locus of a ``superpotential'' determined by the polynomial $s$. Using the techniques developed by Kiem--Li in \cite{KL}, the authors construct a cosection localized virtual class $[\overline{\ccM}_g(\bP^4,d)^p]^\vir_\sigma$ supported on $\overline{\ccM}_g(Q,d)$. By degeneration to the normal cone, they prove the degree of this class agrees with that of the usual virtual class $[\overline{\ccM}_g(Q,d)]^\vir$ up to a sign determined by $g$ and $d$. 

This result has since been variously generalized. In \cite{chang2020invariants} Chang and M.-L. Li consider a complete intersection $Z$ in a projective space $\bbP^m$ defined by some polynomials $(f_1,\dots , f_r)$, sections of line bundles $\ccO(d_1)\dots, \ccO(d_r)$. They consider the moduli of sections $(u_1,\dots ,u_{m+1})$ where $u_i\in H^0(C,L)$ and $L$ is a line bundle on a nodal curve $C$. The family of stability conditions that can be imposed to make sure this stack is Deligne-Mumford is parametrized by $\epsilon\in\bbQ_{>0}\cup \{0+\}$, where $1/\epsilon$ controls the allowed lengths of common zeros of $(u_1,\dots , u_{m+1})$. Each stability condition produces a (not necessarly different) moduli space of $\epsilon$-stable quasi-maps to $\bbP^m$, $\overline{\ccM}^{\epsilon}_{g,n}(\bbP^m,d)$. Requiring that $(u_1,\dots , u_{m+1})$ satisfy the polynomials $(f_1,\dots , f_r)$ gives the moduli spaces $\overline{\ccM}^{\epsilon}_{g,n}(Z,d)$ of  $\epsilon$-stable quasi-maps to $Z$. Adding a section $p\in H^0(C,\bigoplus_{i=1}^rL^{\otimes -d_i}\otimes\omega_C)$ to quasi-maps to $\bbP^m$ they obtain the stack of quasi-maps to $\bbP^m$ with $p$-fields, $\overline{\ccM}^{\epsilon}_{g,n}(\bbP^m,d)^p$. This has a cosection localized virtual class $[\overline{\ccM}^{\epsilon}_{g,n}(\bbP^m,d)^p]_\sigma^\vir$, where the cosection is determined by the gradients of the polynomials $(f_1,\dots ,f_r)$. They prove that for any choice of stability conditions $\epsilon$, the virtual classes $[\overline{\ccM}^{\epsilon}_{g,n}(\bbP^m,d)^p]_\sigma^\vir$ and $[\overline{\ccM}^{\epsilon}_{g,n}(Z,d)]^\vir$ agree up to a sign determined by $d_1,\dots, d_r, g$ and $d$. In particular, for the Gromov--Witten stability conditions corresponding to $\epsilon=\infty$, this generalizes the result of \cite{CL} and rephrases it in terms of virtual classes, not just degrees, by a proving a more powerful result about virtual pullback for cosection localized classes. 

Quasi-maps can be defined in a more general settings, for example with target $X=V_1/\!\!/G$, the GIT quotient of a vector space $V_1$ by a reductive algebraic group $G$ \cite{ciocan2014stable}. In this setting a quasi-map from a prestable curve $C$ with a fixed $G$-torsor $P$ is a $G$-equivariant morphism $P\to V_1$. As in the $\bbP^m$ case above, there is a family of stability conditions parametrized by $\epsilon$ which for large values recovers the Gromov--Witten stability conditions. In \cite{kim2018localized}, Kim--Oh consider a smooth codimension $r$ subvariety $Z$ of such a $X$ given by the vanishing of a section $s$ of a vector bundle $E=[V_1^{\mathrm{ss}}(\theta)\times V_2^{\vee}/G]$. They prove via localized Chern complexes that the cosection localized class of the moduli space of $\epsilon$-stable quasi-maps to $X$, denoted $[\overline{\ccM}^\epsilon(X,\beta)^p]^\vir_\sigma$, agrees up to a sign with $\sum_{i_*\beta'=\beta}[\overline{\ccM}^\epsilon_{g,n}(Z,\beta')]^\vir$.

A further generalization of \cite{CL} has appeared in the work of Chen--Janda--Webb \cite{chen2019virtual}, who consider the moduli space of maps to $Z$ which is the smooth vanishing locus of a regular section $s$ of a vector bundle $E$ on a smooth projective DM stack $X$. In contexts where the theory of quasi-maps is well defined, such as when $X=W/\!\!/G$ for an l.c.i. affine variety $W$ and reductive algebraic group $G$, they prove a generalization of the quasi-maps results of \cite{kim2018localized} and \cite{chang2020invariants}. In the general set-up where quasi-maps are not defined, they show that the Gromov--Witten virtual classes $[\overline{\ccM}(X,\beta)^p]^\vir_\sigma$ and $\sum_{i_*\beta'=\beta}[\overline{\ccM}_{g,n}(Z,\beta')]^\vir$ agree up to sign.  Their more general result is proven by considering the moduli spaces of stable (quasi-)maps as moduli of sections relative to the stack of prestable curves $\mathfrak{M}_{g,n}$. Our approach avoids some of the technical difficulties they encounter and gives a more explicit formula for the perfect obstruction theory and the cosection by working relative to the moduli of prestable curves with a fixed rank $r$ degree $d$ vector bundle $\mathfrak{B}_{g,n, r, d}$. This leads to considering the deformation theory of stable maps $C\to X$ which pull back $E$ on $X$ to a fixed bundle $V$ on $C$. 

Section 2 will cover some of the background of virtual fundamental classes and cosection localized virtual classes. We will recall some functoriality properties that we will use throughout.

In Sections 3--6 we will closely follow and generalize \cite{CL} and \cite{chang2020invariants} by replacing the ambient space $\bP^m$ by a smooth projective variety $X$ without the choice of an ample line bundle and a fixed projective embedding. We fix a vector bundle $E$ on $X$ and a section $s$ with smooth vanishing locus $Z$ of the expected dimension. From the data of the triple $(X,E,s)$, and having fixed genus, degree and number of marked points $(g,n,\beta)$, we construct the moduli of stable maps to $X$ with fields in $E$, denoted $\fX^E$:
\[
\fX^E=\{[(C,\underline{x}),f,p]|[(C,\underline{x}),f]\in\overline{\ccM}_{g,n}(X,\beta),p\in H^0(C,f^*E^{\vee}\otimes\omega_C)\}/\sim
\]
 The triple $(X,E,s)$ gives rise to the data of the moduli space $\fX^E$ together with a perfect obstruction theory and a superpotential, or cosection, with critical locus 
\[
\fZ:=\bigcup_{\beta'|i_*\beta'=\beta}\overline{\ccM}_{g,n}(Z,\beta').
\]
Then there is a cosection localized virtual fundamental class $[\fX^E]^{\vir}_{\sigma}$.

The technique of deformation to the normal cone applied in \cite{CL} and subsequent papers works well in this set-up and allows to pass from $\fX^E$ to the moduli space $\fN^N$ which is an abelian cone of virtual dimension 0 over $\fZ$ defined as
\[
\fN^N=\{[(C,\underline{x}),f,p,q]|[(C,\underline{x}),f]\in\fZ, p\in\Gamma(C,f^*N^\vee\otimes\omega_C), q\in\Gamma(C,f^*N)\}/\sim.
\]
The obstruction sheaf of $\fN^N$ relative to its zero-section $\fZ$ has fiber $(\Ob_{\fN^N/\fZ})_{[C,f,p,q]}=H^1(C,f^*N^\vee\otimes\omega_C)\oplus H^1(C,f^*N)$ corresponding to the obstructions to deforming given sections $p$ and $q$. The cosection of $\fX^E$ deforms to the cosection of $\fN^N$ given by Serre duality:
\begin{align*}
(\Ob_{\fN^N/\fZ})_{[C,f,p,q]}=H^1(C,f^*N^\vee\otimes\omega_C)\oplus H^1(C,f^*N)&\lra  H^1(C,\omega_C)\cong\bC\\
(\dot{p},\dot{q})&\mapsto \langle \dot{p},q\rangle+\langle\dot{q}p\rangle.
\end{align*}
This cosection is invariant under the $\bC^*$ action on the cone $\fN^N$, so allows for cosection localized virtual localization to the fixed locus which is the 0-section $\fZ$. The result is that the cosection localized class $[\fN^N]_\sigma^\vir$ agrees up to a sign with the virtual class of $\fZ$. The sign is determined by the virtual normal bundle of $\fZ$ in $\fN^N$ which is $R^\bullet\pi_{\fZ *}\cE_{\fZ}\oplus\cE_{\fZ}^{\vee}\otimes\omega_{\pi_{\fZ}}$. So the main result is that the cosection localized virtual class on $\fX^E$ agrees with the usual virtual class on the moduli space of stable maps to $\fZ$. 
\begin{theorem}
\[
[\fX^E]^{\vir}_{\sigma}=(-1)^{\int_{\beta}c_1(E)+r(1-g)}[\fZ]^{\vir}.
\]

\end{theorem}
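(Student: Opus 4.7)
The strategy is the two-step one pioneered in \cite{CL} and refined in \cite{chang2020invariants}: first deform along the normal cone of $Z \subset X$ to reduce $\fX^E$ to the simpler model $\fN^N$, then perform a cosection-localized virtual torus localization on $\fN^N$ using the natural $\bC^*$-action on the $(p,q)$-fibers. Working relative to $\fB_{g,n,r,d}$ separates the deformations of $f$ from those of the $p$-field, which is what makes the virtual normal bundle of $\fZ \subset \fN^N$ decompose into the explicit Serre-dual form recalled in the introduction.

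For Step 1, I would construct a one-parameter family $\ffY \to \bA^1$ carrying a relative perfect obstruction theory and relative cosection whose generic fiber is $(\fX^E,\sigma)$ and whose central fiber is $(\fN^N,\sigma)$. This is produced by applying the standard deformation to the normal cone to the target $(X,E,s)$ and taking stable maps with fields of the resulting family of triples. Since the critical locus of the cosection coincides with $\fZ$ on every fiber, the proper-support hypothesis of Kiem--Li holds throughout, and deformation invariance of cosection-localized virtual classes \cite{KL} yields
\[
[\fX^E]^{\vir}_\sigma = [\fN^N]^{\vir}_\sigma \quad \text{in } A_*(\fZ).
\]

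For Step 2, the $\bC^*$-action $(p,q) \mapsto (tp, t^{-1}q)$ leaves $\sigma$ invariant, has $\fZ$ as its fixed locus, and $\sigma$ vanishes identically along $\fZ$, so the cosection-localized class of the fixed locus is just $[\fZ]^{\vir}$. Cosection-localized virtual torus localization then gives
\[
[\fN^N]^{\vir}_\sigma = \frac{[\fZ]^{\vir}}{e\bigl(N^{\vir,\mov}_{\fZ/\fN^N}\bigr)}.
\]
By the description recalled in the introduction, $N^{\vir,\mov}_{\fZ/\fN^N}$ is $R\pi_{\fZ *}(\cE_\fZ^\vee \otimes \omega_{\pi_\fZ})$ of weight $+1$ plus $R\pi_{\fZ *}\cE_\fZ$ of weight $-1$, where $\cE_\fZ$ restricts on the universal curve to $f^*N$; relative Serre duality identifies the two summands as formal duals. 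Resolving $R\pi_{\fZ *}\cE_\fZ$ as $[V_0 \to V_1]$ and computing the equivariant Euler class in Chern roots, the paired factors cancel up to a sign, leaving
\[
e\bigl(N^{\vir,\mov}_{\fZ/\fN^N}\bigr) = (-1)^{\chi(f^*N)}.
\]
Riemann--Roch gives $\chi(f^*N) = \deg f^*N + r(1-g)$, and the projection formula combined with $N = E|_Z$ yields $\deg f^*N = \int_\beta c_1(E)$. Combining this with Step 1 and the identity $1/(-1)^n = (-1)^n$ completes the proof.

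The principal obstacle is Step 1: producing a single relative perfect obstruction theory and cosection on $\ffY$ whose restrictions to $t \neq 0$ and $t = 0$ agree with the ones on $\fX^E$ and $\fN^N$ respectively, and verifying that the support of the cosection-localized class remains in $\fZ$ uniformly, so that the Kiem--Li deformation invariance applies. Once this is in place, Step 2 is essentially a bookkeeping exercise, but it relies crucially on the exact Serre duality between the two halves of $N^{\vir,\mov}_{\fZ/\fN^N}$ together with their opposite $\bC^*$-weights.
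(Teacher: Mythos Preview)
Your proposal is correct and follows essentially the same route as the paper: a deformation-to-the-normal-cone family of triples over $\bA^1$ to pass from $\fX^E$ to $\fN^N$ via Kiem--Li deformation invariance, followed by cosection-localized virtual torus localization on $\fN^N$ with the scaling $\bC^*$-action on the $(p,q)$-fibers. Your identification of Step~1 as the main technical point is exactly right; the paper handles it by working with the relative Atiyah bundle $A=\bT_{V/BGL_r\times\bA^1}$, and your weight convention on $(p,q)$ is swapped relative to the paper's but this is immaterial for the sign.
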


Section 7 will follow the approach of the previous sections to further generalize the result to the case where $\ccX$ is a smooth Deligne--Mumford (DM) stack with projective coarse moduli space and $s$ is a regular section of some bundle $\ccE$ (not necessarly pulled back from the coarse space). Due to the subtleties of working with stable maps to a DM stack, which require allowing some orbifold structure on the domain curves, we have chosen to treat this case separately to avoid further encumbering the notation of the previous sections. The spaces we will consider are those of twisted stable maps with a fix trivialization of the marked gerbes. The main difference from the previous case is that we wish to keep track of the actions of the isotropy groups of the marked points on $f^*\ccE$. Doing this decomposes our moduli spaces into (possibly disconnected) components of constant virtual dimension, $\fX^{\ccE}(\lambda_1,\dots,\lambda_n)$ and $\fZ^{\tw}(\lambda_1,\dots,\lambda_n)$. The main result of this section will then be that for each of these components we have
\begin{theorem}
\[
[\fX^{\ccE}(\lambda_1,\dots,\lambda_n)]^{\vir}_{\sigma}=(-1)^{r(1-g)+d-\sum_{i=1}^n\age_{\ccX_{\lambda_i}}(\ccE)}[\fZ^{\tw}(\lambda_1,\dots,\lambda_n)]^{\vir}.
\]
\end{theorem}
The formula for the sign comes from the Riemann--Roch theorem for orbi-curves.
Although it is well-known that the QL principle can fail in higher genus, it is still somewhat surprising that it can also fail for the genus 0 Gromov--Witten theory of a hypersurface in orbifold projective space. Some examples of this failure are detailed in \cite{MR3039825}. In the last section of we will show how our construction applies to their specific examples.

\subsection{Acknowledgments}
I am grateful to my advisor Chiu-Chu Melissa Liu for suggesting this project to me and for patiently helping me through the stages of this work. I would also like to thank Johan de Jong for the very helpful conversations and Barbara Fantechi for her inspiring course on virtual fundamental classes at Berkeley University in the spring of 2018. My gratitude also goes to Henry Liu and Andrea Dotto for their helpful comments and suggestions, as well as Qile Chen, Felix Janda and Rachel Webb for the talks they gave at Columbia explaining their work as well as the conversations that followed. 
\section{Notation and background}
\subsection{Notation}
We work over $\bC$. We introduce some notation.
\begin{enumerate}
\item Let $GL_r$ denote the general linear group of $r\times r$ invertible matrices, and let $\fgl_r$ denote the Lie algebra of $GL_r$.
\item  If a group $G$ acts on a scheme $M$, we let $[M/G]$ denote the quotient stack and let $M/G$ denote the coarse
moduli space. There is a projection $[M/G]\lra M/G$, which is an isomorphism when the $G$-action on $M$ is free.
\item For a locally free sheaf $\cE$ of constant rank on $X$, we adopt the convention $\Vb(\cE)=\Spec_X(\Sym\cE^\vee)$. 
\item Given a locally free sheaf $\ccE$ over $X$ of rank $r$, we denote by $P_{\cE}$ the corresponding $GL_r$-torsor, where we fix the fundamental representation of $GL_r$. 
\item For any scheme or stack $M$, let $\Id_M: M\to M$ denote the identity map. 
\item Let $\bullet$ denote a point.
\item $\fM_{g,n}$ is the Artin stack of prestable genus $g$, $n$-pointed curves with universal family $\fC_{\fM_{g,n}}$. For a stack $\fX$ over $\fM_{g,n}$ we denote the universal curve of $\fX$ by $\fC_{\fX}$ and the projection $\fC_{\fX}\to\fX$ by $\pi_{\fX}$. If $\fX$ is a moduli of stable maps to a target, we denote the universal evaluation morphism by $ev_{\fX}$.
\item We will often denote morphism of stacks which come from forgetful functors by $\pi_{\fX/\fM}:\fX\to \fM$. If $\fM$ has a universal curve $\fC_{\fM}$, the corresponding morphism $\fC_{\fX}\to \fC_{\fM}$ will be denoted by $\overline{\pi_{\fX/\fM}}$.
\item $\ccD_{M}$ will denote the derived category of quasicoherent sheaves on $M$. $\bL_M$ will denote the cotangent complex of $M$ and $\bT_M$ its derived dual. 
\end{enumerate}
\subsection{Background}
We collect here some well-known results about virtual fundamental classes and cosection localized classes.

Let $f: M\to S$ be a morphism of DM-type between algebraic stacks and $(\phi_{M/S},\bE_{M/S})$ {a perfect obstruction theory} in the sense of \cite{BF}. Manolache \cite{VPB} constructs a virtual pull-back
\[
f^!_{\bE_{M/S}}: A_*(S)\to A_{*-n}(M).
\]
If $S$ is an Artin stack of pure dimension, we get $[M]^{\vir}=f^!_{\bE_{M/S}}[S]$ . Let $g:N\to M$ be a DM-type morphism with perfect obstruction theory $\bE_{N/M}$. Let $h=f\circ g:N\to S$ have some perfect obstruction theory $\bE_{N/S}$. We say the triple $(\bE_{N/S},\bE_{M/S},\bE_{N/M})$ is a \textit{compatible triple} of perfect obstruction theories if there is a morphism of distinguished triangles
\[
\xymatrix{
g^*\bE_{M/S}\ar[r]\ar[d]^{g^*\phi_{M/S}} & \bE_{N/S}\ar[r]\ar[d]^{\phi_{N/S}} & \bE_{N/M}\ar[d]^{\phi_{N/M}}\ar[r] & \\
g^*\bL_{M/S}\ar[r] & \bL_{N/S}\ar[r] & \bL_{N/M}\ar[r] & 
}
\]
then we have the following functoriality result 
\begin{theorem}{\cite[Theorem 4.8]{VPB}}
For $\alpha\in A_*(S)$,
 \[
h^!_{\bE_{N/S}}(\alpha)=g^!_{\bE_{N/M}}(f^!_{\bE_{M/S}}(\alpha)).
\]
\end{theorem}

We also summarize here some constructions of Chang--Li \cite[Section 2]{CL}, which we will use throughout. Let $S$ be an Artin stack with $\pi:\ccC\to S$ a flat family of connected, nodal, arithmetic genus $g$ curves. Let $\ccL$ a locally free sheaf on $\ccC$. The \textit{direct image cone} is
\[
C(\pi_*\ccL):=\Spec_{S}\Sym^{\bullet}( R^1\pi_*\ccL^{\vee}\otimes\omega_{\pi}).
\]
For any scheme $f:T\to S$ the fiber $C(\pi_*\ccL)(T)$ is the collection of $p\in H^0(C_T,f^*\ccL)$ where $C_T=T\times_{S}\ccC$ and $f^*\ccL=\ccL\times_{\cO_{\ccC}}\cO_{C_T}$. This follows by Serre duality and by the fact that in this case $R^1\pi_*$ commutes with pullbacks. 
More generally, the \textit{moduli of sections} of a representable, smooth, quasi-projective morphism $Z\to \ccC$ is the groupoid
\[
\Gamma(Z/\ccC)(T):=\{\mbox{sections of } Z_T\to C_T \}.
\]
which is an algebraic stack, representable and quasi-projective over $S$. 
For example, $C(\pi_*\ccL)$ above is the moduli of sections of $\Vb(\ccL)$.

\begin{theorem}{\cite[Proposition 2.5]{CL}} The morphism
$\Gamma(Z/\ccC)\to S$ has a relative perfect obstruction theory given by
\[
\phi_{\Gamma(Z/\ccC)/S}:\bL_{\Gamma(Z/\ccC)/S}\to \bE_{\Gamma(Z/\ccC)/S}=\left(R^{\bullet}\pi_{\Gamma(Z/\ccC)*}\mathfrak{e}^* T_{Z/\ccC} \right),
\]
where $\mathfrak{e}:\ccC_\Gamma(Z/\ccC)\to Z$ is the tautological evaluation map.The morphism $\phi_{\Gamma(Z/\ccC)/S}$ is induced by the differential of $\mathfrak{e}$.
\end{theorem}

To construct the virtual class of a degeneracy locus in a non-proper space, the quantum analog of $\Vb(E^{\vee})$, we will use the technique developed by Kiem--Li \cite{KL} and Chang--Kiem--Li \cite{CKL}. We briefly discuss the construction and summarize its main properties.

\begin{theorem}\cite{KL}\label{thm:coslocvir}
Let $M$ be a Deligne--Mumford stack with a perfect obstruction theory $\bE_{M}$ of virtual dimension $d^{\vir}_M$ and a cosection of the obstruction sheaf
\[
\sigma:h^1(\bE_M^{\vee})\to\cO_M.
\]
There is a cycle $[M]^{\vir}_{\sigma}\in A_{d^{\vir}_M}(D(\sigma))$ supported on the degeneracy locus 
$D(\sigma)$
 of the cosection. Moreover, if $[M]^\vir$ exists, then $i_*[M]^\vir_{\sigma}=[M]^{\vir}$ for $i:D(\sigma)\xhookrightarrow{} M$.
\end{theorem}
Note that the theorem is stated for an absolute perfect obstruction theory and obstruction sheaf. However, we often consider $M$ relative to some smooth Artin stack of pure dimension, say $\pi:M\to S$. Let $\phi_{M/S}:\bE_{M/S}\to\bL_{M/S}$ be a relative perfect obstruction theory. We can recover an absolute obstruction sheaf for $M$ as follows. Consider the distinguished triangle
\[
\bL_{M}\to\bL_{M/S}\to\pi^*\bL_{S}[1]\to
\]
Composing $\phi_{M/S}$ with the last morphism gives $g: \bE_{M/S}\to \pi^*\bL_{S}[1]$. Then the shifted mapping cone $\bE_M:=c(g)[-1]$ gives an absolute perfect obstruction theory for $M$ (see, for example \cite[Proposition 3]{KKP}). Then we take \[
\Ob_{M}=h^1(\bE_{M}^{\vee})=\mathrm{Coker}\{(h^1(g^{\vee})):h^0(\pi^*\bT_{S})\to\Ob_{M/S}\}.
\]
Note that the absolute perfect obstruction theory depends on the relative perfect obstruction theory. 
For the relative version of \cref{thm:coslocvir} to hold, we require a relative cosection
\[
 \sigma_{M/S}:\Ob_{M/S}\to\ccO_{M}
\]
which factors through $\Ob_{M/S}\to\Ob_{M}$. Note that the latter morphism is surjective, so the degeneracy locus is unchanged.
\begin{remark}\label{rklift}
All our cosections will arise as $\sigma_{M/S} = h^1(\sigma^{\bullet})$ from some morphism $\sigma^{\bullet}:\bE^{\vee}_{M/S}\to \bbF$, where $\bbF$ will be some perfect complex concentrated in degrees $[0,1]$ with $h^1(\bbF)=\cO_M$. To check such a cosection factors, it suffices to check that the composition $h^1(\sigma^{\bullet}\circ \phi^{\vee}_{M/S})$ is trivial.
\end{remark}

\begin{definition}
For relative perfect obstruction theories $\bE_{M/S}$ and $\bE_{M/T}$ relative to different smooth Artin stacks $S$ and $T$, we will call the obstruction theories compatible if they induce the same absolute obstruction theory. We will say that cosections of the relative obstruction sheaves are compatible if they both factor through $\Ob_M$ as the same absolute cosection. 
\end{definition}

The construction of this cosection localized cycle has two main steps. First Kiem--Li define a cosection localized Gysin map
\[
s^!_{\bE_{M},\sigma}: A_*(\underline{E}(\sigma))\to A_{*+ d^{\vir}_M}(D(\sigma))
\]
where 
\[
\underline{E}(\sigma):=\cs(\bE_{M})|_{D(\sigma)}\sqcup \mathrm{ker}(\sigma|_{M\setminus D(\sigma)}).
\]
Then they show that the instrinsic normal cone $\ccC_M$ of $M$ is supported on $\underline{E}(\sigma)$, if the cosection lifts to a cosection of the absolute obstruction bundle. They define
\[
[M]^{\vir}_{\sigma}=s^!_{\bE_{M},\sigma}([\ccC_M]).
\] 
For cycles supported on $\cs(\bE_{M})|_{D(\sigma)}$, $s^!_{\bE_{M},\sigma}$ is just given by intersecting with the zero-section. More generally, suppose for simplicity that there is a locally-free sheaf $E_1\to\Ob_{M}\to 0$. For any cycle $[B]\in Z_*(E_1(\sigma))$ represented by a closed integral substack, they show one can always construct a \textit{regularizing morphism} $\nu:\tilde{M}\to M$ such that there is a short exact sequence of locally-free sheaves
\[
0\to K\to \nu^*E_1\to \cO_{\tilde{M}}(D)\to 0
\]
where the last morphism is induced by $\sigma$ and $D$ is a Cartier divisor. In this case, they show that there is a closed integral substack $\tilde{B}\subset K$ such that $\nu_*[\tilde{B}]=k[B]$ for some integer $k$. Then
\[
s^!_{\bE_{M},\sigma}[B]:=\frac{1}{k}\nu_*([D]\cdot 0^!_{K}[\tilde{B}]).
\]

We will mainly use the following three theorems about cosection localized virtual fundamental classes. The first is a functoriality result. 
\begin{theorem}{\cite[Theorem 2.6]{CKL}}\label{theoremfun}
Let $f:M\to N$ be a virtually smooth morphism of DM stacks. Suppose that, for some smooth Artin stack $S$ of pure dimension, there is a compatible triple of perfect obstruction theories $(\bE_{N/S},\bE_{M/S},\bE_{M/N})$ and that $\sigma:\Ob_{N}\to \cO_N$ is an absolute cosection. This induces a cosection on $M$ given by 
\[
\sigma': \Ob_M\to f^*\Ob_N\to f^*\cO_N=\cO_M.
\]
Then
\[
f^!_{\bE_{M/N}}[N]^{\vir}_{\sigma}=[M]^{vir}_{\sigma'}.
\]
where $f^!_{\bE_{M/N}}$ is the virtual pullback of Manolache \cite{VPB}.
\end{theorem}

The second is a deformation-invariance result.
Although this result was established first, and is used in the proof of \cref{theoremfun}, it is useful to view it as a special case of the functoriality statement. 
Let $N$ be a DM stack with a morphism $N\to \bA^1\times S$ where $S$ is a smooth Artin stack. Let $N_t$ be the fiber of $N$ over $t\in\bA^1$:
\[
\xymatrix{
N_t\ar[r]^{\iota_t}\ar[d]& N\ar[d]\\
t\ar[r]^{i_t} & \bA^1.
}
\] 
Let $\bE_{N/S}$ and $\bE_{N/S\times\bA^1}$ be compatible relative perfect obstruction theories on $N$ in the sense that we have a morphism of distinguished triangles
\begin{equation}\label{a1inv}
\xymatrix{
\ccO_N[-1]\ar[r]& \bE^{\vee}_{N/\bA^1\times S} \ar[r] & \bE^{\vee}_{N/S}\ar[r]^{+1} & \\
\ccO_N[-1]\ar[r]\ar[u]^{\cong}& \bT_{N/\bA^1\times S} \ar[u]^{\phi^{\vee}_{N/\bA^1\times S}}\ar[r] &\bT_{N/S}\ar[u]^{\phi^{\vee}_{N/S}}\ar[r]^{+1} &
}
\end{equation}
 In particular, the absolute obstruction sheaf $\Ob_N$ induced by either is the same. Let $\sigma$ be an absolute cosection of $\Ob_N$. Then we can define a perfect obstruction theory for $N_t$ by 
\[
\bE_{N_t/S}:=\iota_t^*\bE_{N/S\times\bA^1}\to\iota_t^*\bL_{N/S\times\bA^1}\to\bL_{N_t/S}.
\]
  Let $\bE_{N_t/N}=\cO_{N_t}[1]$. The triple $(\bE_{N/S},\bE_{N_t/S},\bE_{N_t/N})$ is compatible. Let $\sigma_t$ induced by $\sigma$ as in the previous section. Then by \cref{theoremfun} we have
\begin{theorem}{\cite[Theorem 5.2]{KL}}\label{deformationthm}
\[
[N_t]^{\vir}_{\sigma_t}=i_t^![N]^{\vir}_{\sigma}
\]
where $i_t^!$ is a Gysin map of the divisor $t\in \bA^1$.
\end{theorem}
Note that the two different obstruction theories for $N$ are both needed for this construction: the one relative to $S\times\bA^1$ to induce a perfect obstruction theory on the fiber and the one relative to $S$ to obtain the compatible triple needed for the functoriality statement.

Finally, there is a cosection localized version of the usual torus localization result by Graber--Pandharipande \cite{Loc}.
Let $T=\bC^*$ and $N$ be a DM stack with a $T$-action and $T$-equivariant perfect obstruction theory $\phi_N:\bE_{N}\to\bL_{N}$. Suppose $\sigma:\Ob_N\to\cO_N$ is a $T$-equivariant cosection. Let $M=(N)^{T}$ be the fixed locus, let $\iota$ denote its inclusion in $N$. The fixed and moving parts of $\bE_N$ give respectively a perfect obstruction theory and a virtual normal bundle for $M$.
\[
\bE_M:=\bE_N^{\fix}|_M\qquad N^{\vir}:=(\bE_N^{\mov}|_M)^{\vee}.
\]
It's standard to check that $\bE_M$ is indeed a perfect obstruction theory for $M$. By $T$-equivariance of $\sigma$, it induces a cosection $\sigma':\Ob_M=h^1(\bE_{M})\to\cO_M$ so there is a cosection localized virtual class on $M$. Suppose there is a global resolution
\[
N^{\vir}\cong [N_0\to N_1]
\]
where $N_0$ and $N_1$ are locally-free sheaves on $M$. 
Then Chang--Kiem--Li prove the following virtual torus localization result, extending the result of Graber--Pandharipande \cite{Loc}.
\begin{theorem}\label{cosloc}\cite[Theorem 3.4]{CKL} 

\[
[N]_{\sigma}^\vir=\iota_* \frac{[M]^\vir_{\sigma'}}{e_T(N^{\vir})}\in A^T_*(M)\otimes_{\bbQ}\bbQ[t,t^{-1}].
\]
\end{theorem}

\section{Classical Version}
In this section, $X$ is a smooth projective variety. 
\subsection{The pair $(X,E)$}
Let $E\to X$ be a rank $r$ vector bundle over $X$. Let $P_E\to X$ be the frame bundle of $E$, which is a principal
$GL_r$-bundle over $X$. Then $X =[ P_E/GL_r]$ where $GL_r$ acts freely on the right on $P_E$, and 
$E = P_E\times_{\rho} \bC^r$, where $\rho=\Id_{GL_r}:  GL_r \to GL_r$ is the fundamental representation. Let $\End(E)\cong E\otimes E^\vee$ be the bundle of endomorphisms of $E$. Then $\End(E)= P_E\times_{\Ad}\fgl_r$, where
$\Ad: GL_r \to GL(\fgl_r)$ is the adjoint representation:
$\Ad(g)(\xi)= g^{-1}\xi g$ for $g\in GL_r$ and $\xi\in \fgl_r$. 
We have the following short exact sequence of $GL_r$-equivariant vector bundles over $P_E$:
\begin{equation}\label{eqn:TP}
0\to T_{P_E/X} \cong P_E\times \fgl_r \lra T_{P_E} \lra (\pi_{P_E/X})^*T_X\to 0,
\end{equation}
where $GL_r$-action on $P_E\times \fgl_r$ is given by 
$(e,\xi)\cdot g = (e\cdot g, \Ad(g^{-1})(\xi))$, and $\pi_{P_E/X}: P_E\to X$ is the projection. Taking the quotient of \eqref{eqn:TP} by the
free $GL_r$-action, we obtain the following short exact sequence of vector bundles on $X$:
\begin{equation}\label{eqn:AE}
0 \to \End(E)  \to A_E \to T_X \to 0.
\end{equation}
The vector bundle $A_E$ defines a class 
$$
[A_E] \in \Ext^1(T_X, \End(E)) = H^1(X, \Omega^1_X \otimes \End(E)),
$$ 
known as the Atiyah class of $E$ introduced in \cite{A57}.  Recall that $H^0(X, A_E)$ records the infinitesimal automorphisms of the pair $(X,E)$ and $H^1(X,A_E)$ the inifinitesimal deformations of the pair.
\begin{example}
Let $X=\bbP^m$ and $E=\ccO(k)$, then $P_E=\bC^{m+1}\setminus\{0\}$ has $\bC^*$-action with weights $(-k,\dots ,-k)$. The class of the extension in\cref{eqn:AE} in $[A_{\ccO(k)}]=k[A_{\ccO(1)}]$, where the latter is just the Euler sequence:
\[
[A_{\ccO(1)}]=[0\to\ccO\to\ccO(1)^{\oplus{m+1}}\to T_{\bP^m}\to 0].
\]
\end{example}

\subsection{A section $s$ of $E$.}
Let $s$ be a section of $\pi_{E/X}:E\to X$. This gives rise to a $GL_r$-equivariant map $\phi_s: P_E\to\bbC^r$ that fits in the Cartesian diagram
\begin{equation}
\xymatrix{
P_E \ar[d]\ar[r]^{\Id_{P_E}\times \phi_{s}} & P_E\times \bC^r\ar[d]\\
X=P_E/GL_r \ar[r]^{s\qquad} &\Vb(E) = (P_E\times \bC^r)/GL_r
}
\end{equation}
The map $\phi_{s}: P_E\to \bC^r$ is $GL_r$-equivariant: if $e\in P_E$, $v\in \bC^r$, and $g\in GL_r$, then
$$
\phi_{s}(e\cdot g) = \rho_0(g^{-1}) (\phi_{s}(e)). 
$$
Taking the differential of $\phi_{s}$, we obtain a $GL_r$-equivariant map
$$
T_{P_E} \stackrel{d\phi_{s}}{\lra} \phi_s^* (T_{\bC^r}) =P_E \times \bC^r.  
$$
Taking quotients by the free $GL_r$-action, we obtain
\begin{equation}\label{eqn:deltas}
A_E \stackrel{\delta s}{\lra} E.
\end{equation}

To summarize: a section $s\in H^0(X,E)$ defines an element $\delta s \in \Hom(A_E,E) = H^0(X, A_E^\vee\otimes  E)$.
\begin{example}
Let $X=\bbP^m$, $E=\ccO(k)$. We have established that $A_{\ccO(k)}\cong \ccO(1)^{m+1}$. Given $s$ some homogeneous degree $k$ polynomial in $m+1$ variables, \[
\delta s\in\Hom(\ccO(1)^{m+1},\ccO(k))= H^0(\bP^m,\ccO(k-1)^{\oplus m+1})
\]
 is given by the partial derivatives of $s$. 
\end{example}
Let 
$$
ds: T_X\lra s^*T_E
$$
be the differential of $s$. Let $Z=Z(s)$ be the zero locus of $s$, which is a closed subscheme of $X$. Let $i: Z\hookrightarrow X$ be the inclusion which is 
a closed embedding.
The pullback $i^*s^*T_E$ is a direct sum:
$$
i^*s^*T_E =  i^*T_X \oplus i^*E.
$$  
Let 
\begin{equation}\label{eqn:tangent}
i^*T_X \stackrel{Ds}{\lra} i^*E
\end{equation}
be the composition
$$
i^*T_X \stackrel{i^*ds}{\lra} i^*s^* T_E  =   i^*TX \oplus i^*E \lra i^*E,
$$ 
where the last arrow is projection to the second factor. 
\subsection{Regularity} 
We now assume that $s$ is a {\em regular} section in the sense of \cite[Section 14.1]{Ful}, i.e. the $r$ functions locally defining $Z$ form a regular sequence.
Then $Z$ is a smooth subscheme of $X$ of codimension $r$, and $Ds:i^*T_X\to i^*E$ is a surjective vector bundle map.
We have the following two short exact sequences of vector bundles over $Z$ (which are dual to each other):
$$
0\to T_Z \stackrel{di}{\lra} i^*T_X \stackrel{Ds}{\lra} i^*E = N_{Z/X} \to 0,
$$
$$
0 \to  i^*E^\vee = N^*_{Z/X} \stackrel{(Ds)^*}{\lra} i^* \Omega_X \lra \Omega_Z \to 0. 
$$
In particular,
$$
\bL_Z =  [ i^*E^\vee \stackrel{(Ds)^*}{\lra} i^*\Omega_X ] \simeq  [ 0\to \Omega_Z].  
$$
The condition that the section $s: X\to E$ is regular is equivalent to the condition that $0$ is a regular value of the 
$\bC^r$-valued function $\phi_{s}: P_E\to \bC^r$. It also implies that
$$
\delta s: A_E\to E
$$  
is a surjective vector bundle map.  

\subsection{Superpotential}
The section $s$ defines a regular function $W: \Vb(E^\vee) \to \bC$ on the total space of $E^\vee$, as follows.
Any point in $\Vb(E^\vee)$ is a pair $(x,p)$ where $x$ is a point in $X$ and $p$ is a vector in $E_x^\vee$, 
the fiber of $E\to X$ over $x$; $s(x)$ is a vector in $E_x$. Then
$$
W(x,p) = \langle p, s(x)\rangle
$$  
where $\langle , \rangle$ is the natual pairing between dual vector spaces.
Let $\tW:P_E\times\bC^r\to \bC$ be the composition
$$
P_E\times \bC^r \lra E^\vee = P_E\times_{\rho\vee}\bC^r \stackrel{W}{\lra} \bC.
$$
Then for any $e\in P_E$, $u\in \bC^r$ (viewed a $r\times 1$ matrix), and $g\in GL_r$ (viewed as an $r\times r$ matrix), 
$$
\tW(e, u) = u^t \phi_{s}(e),\quad \tW(e\cdot g, \rho_0^\vee(g^{-1})(u)) = \tW(e\cdot g,g^tu) = u^t g g^{-1} \phi_{s}(e) = \tW(e,u),  
$$
where $u^t \phi_{s}(e)$ is the product of the $1\times r$ matrix $u^t$ and the $r\times 1$ matrix $\phi_{s}(e)\in \bC^r$.
The differential of $\tW$ at $(e,u)\in P_E\times \bC^r$ is
$$
(d\tW)_{(e,u)}:T_eP_E \times \bC^r \lra \bC,\quad (\dot{e}, \dot{u})\mapsto  u^t (d\phi_{s})_e(\dot{e}) + (\dot{u})^t \phi_s(e).
$$
The critical point of $\tW$ is 
$$
\Crit(\tW)=\{ (e,u)\in P_E\times \bC^r:  \phi_{s}(e)=0,\quad {u}\in \left(\mathrm{Im}(d\phi_{s})_e)\right)^\perp \} \subset \phi_{s}^{-1}(0) \times \bC^r.
$$
If $s$ is a regular section, then $\mathrm{Im}\Big((d\phi_{s})_e\Big)=\bC^r$ for all $e\in \phi_{s}^{-1}(0)$, so 
$$
\Crit(\tW) = \phi_{s}^{-1}(0) \times \{0\} \subset P_E\times \bC^r.
$$
and
$$
\Crit(W) =\{ (x,p)\in E^\vee: s(x)=0, p=0\} = Z \subset X \subset E^\vee,
$$
where $X\subset E^\vee$ is the inclusion of the zero section.

\subsection{Classifying spaces}\label{class_spaces}
Let $BGL_r =[ \bullet/GL_r]$ be the classifying space of principal $GL_r$ bundles, or equivalently, the classifying space of rank $r$ vector bundles. 
The projection $\bullet \to [\bullet/GL_r]$ is the universal principal $GL_r$ bundle, and 
the projection $\pi: U_r:= [\bC^r/GL_r]\lra BGL_r= [\bullet/GL_r]$ is the universal rank $r$ vector bundle, where 
the $GL_r$ action on $\bC^r$ is given by $v\cdot g = \rho_0(g^{-1})v$. The vector bundle $E\to X$ defines a morphism
$\psi_E: X\to BGL_r$ such that the following diagram is Cartesian:

\[
\xymatrix{
P_E\ar[d]\ar[r] & \bullet\ar[d]\\
X\ar[r]^{\psi_E}& [\bullet/GL_r]
}
\] 
The morphism $\psi_E$ induces a distinguished triangle
\[
\psi_E^*\bbL_{BGL_r}\to\bbL_X\to \bL_{X/BGL_r}\to \psi_E^*\bbL_{BGL_r}[1].
\]
$\bbL_{BGL_r}$ is quasi-isomorphic to $\mathfrak{gl}_r$, the trivial rank $r$ bundle over a point carrying the adjoint representation of $GL_r$, in degree 1. Then 
\[ \psi_E^*\bL_{BGL_r}\simeq [0\to P_E\times_{\Ad}\mathfrak{gl}_r].
\]
 Since $\psi_E$ is a smooth representable morphism, the above distinguished triangle induces the short exact sequence
\[
0\to \Omega_X\to \Omega_{X/BGL_r}\to P_E\times_{\mbox{Ad}}\mathfrak{gl}_r\to 0 
\]
dual to \cref{eqn:AE}. In other words, the bundle $A_E$ is the dual of the cotangent bundle of the morphism $X\to BGL_r$ induced by the bundle $E$.

 The section $s: X\to E$ defines a morphism $\psi_s: X\to [\bC^r/GL_r]$
such that  $\pi\circ \psi_s =\psi_E$.  More explicitly, we have the following cartesian diagram:
\begin{equation}\label{eqn:XEs}
\xymatrix{
E  =(P_E\times \bC^r)/GL_r   \ar[d]^{\pi_{E/X}} \ar[r]^{\widetilde{\psi_E}}  &\qquad U_r=[\bC^r/GL_r]\ar[d]^\pi  \\
X  =P_E/GL_r \qquad \ar@/^/[u]^s \ar[r]^{\psi_E} \ar[ur]^{\psi_s}  & BGL_r=[\bullet/GL_r] 
}
\end{equation}
The composition  $\pi_{E/X}\circ s:X\to X$ is the identity map since
$s:X\to E$ is a section of $\pi_{E/X}:E\to X$, and $\psi_s = \widetilde{\psi_E}\circ s: X\to [\bC^r/GL_r]$. 
Therefore, $[\bC^r/GL_r]$ can be viewed as the classifying space of pairs $(E,s)$, where $E$ is a rank $r$ vector bundle
and $s$ is a section of $E$.

Diagram \eqref{eqn:XEs} is obtained by taking quotients by the $GL_r$-actions on the following 
cartesian diagram of $GL_r$-spaces (where all the arrows are $GL_r$-equivariant):
\begin{equation}\label{eqn:ts} 
\xymatrix{
P_E\times \bC^r   \ar[d]^{\pr_1} \ar[r]^{\pr_2}  & \bC^r\ar[d] \\
P_E  \ar@/^/[u]^{\Id_{P_E}\times \phi_{s}} \ar[r] \ar[ur]^{\phi_{s}}  & \bullet 
}
\end{equation}
where $\pr_1: P_E\times \bC^r \to P_E$ and $\pr_2: P_E\times \bC^r \to \bC^r$ are projections to the first and second factors, respectively.
The map $\delta s$ of \cref{eqn:deltas} is induced by the differential of $\psi_s$ in \cref{eqn:XEs}:
\begin{equation}\label{deltas}
\bbT_{X/BGL_r}=A_E\to \psi_s^*\bbT_{U_r/BGL_r}\cong s^*T_{Vb(E)/X}\cong E.
\end{equation}
Since $s$ is a regular section, $\delta s$ is surjective.
\section{Quantum Version}\label{sect: quantum}

We will fix a triple $(X,E,s)$ where $X$ is a smooth projective variety, $\pi_{E/X}:E\to X$ is a vector bundle, and
$s:X\to E$ is a regular section. We will denote a morphism $M_1\to M_2$ between moduli stacks $\pi_{M_1/M_2}$. 

\subsection{Moduli stacks independent of $(X,E,s)$}\label{ss: SandP}
Let $\fM=\fM_{g,n}$ be the moduli stack of genus $g$, $n$-pointed prestable curves. Then $\fM$ is a smooth Artin stack of dimension $3g-3+n$.
Let $\fC_{\fM}\to \fM$ be the universal curve.

The analog of $BGL_r$ is the moduli space $
\Bun_{g,n,r}$ parametrizing prestable genus $g$ $n$-pointed curves together with a rank $r$ vector bundle. This can be viewed as a moduli of sections of 
\[ \Bun_{g,n,r}=\Gamma(\fC_{\fM}\times BGL_r/\fC_{\fM}).\] 
For a scheme $T\xrightarrow{f} \fM$ denote pullback of the universal curve by $C_T$. Objects in the fiber of $\Bun_{g,n,r}$ over $T\to \fM$ are sections of ${f}^*\fC_{\fM}\times BGL_r = C_T\times BGL_r\to C_T$. That is, morphisms $C_T\to BGL_r$. So the fiber of $\Bun_{g,n,r}$ over $T/\fM$ consists of rank $r$ vector bundles over $C_T$. 
As shown in \cite{ciocan2014stable}, the moduli space $\Bun_{g,n,r}$ is itself a smooth Artin stack of dimension $(3+r^2)(g-1)+n$. The degree of the restriction of a vector bundle $F_T$ over $C_T$ to the fiber over a point $t\in T$ is constant for connected schemes $T$ since $C_T\to T$ is a flat projective family. We further fix $d\in \bZ$ and let
$$
\fB= \Bun_{g,n,r,d} 
$$
be the connected component of $\Bun_{g,n,r}$ corresponding to vector bundles which have degree $d$. Objects in $\fB(\bullet)$ are pairs $((C,x_1,\ldots,x_n),F)$ where $(C,x_1,\ldots,x_n)$ is a pointed prestable curve and $F$ a degree $d$ rank $r$ vector bundle. 

The morphism $\pi_{\fB/\fM}:\fB \lra \fM$ given by forgetting the vector bundle  is smooth of relative dimension $r^2(g-1)$. Let $\fC_{\fB}=\fC_{\fM}\times_{\fM}\fB$ be the universal curve on $\fB$, $\cP_{\fB}\to\fC_{\fB}$ the universal $GL_r$-torsor and $\cE_{\fB}$ be the associated universal vector bundle.

We can further form a cone stack $\fS$ over $\fB$ parametrizing curves with a vector bundle and a section:
$$
\fS =\Spec(\Sym \;R^1\pi_{\fB*}\cE_{\fB}^{\vee}\otimes\omega_{\pi_{\fB}}).
$$
For any scheme $T$ and morphism $T\to \fB$, $\fS(T)=H^0(C_T,\cE_T)$ by Serre duality.  Here $\cE_T\to C_T\to T$ is the pullback of the universal family $\cE_{\fB}\to \fC_{\fB}\to \fB$.
Similarly, we have $\fS_{g,n,r}\to\fB_{g,n,r}$.

By \cite{CL} Lemma 2.5, there is a relative dual perfect obstruction theory $\bE_{\fS/\fB}^{\vee}$ given by 
$$
\bE^{\vee}_{\fS/\fB} = \pi^*_{\fS/\fB} R^\bullet \pi_{\fB*} \cE_{\fB}.
$$
The relative virtual dimension is 
$$
d^\vir_{\fS/\fB} = h^0(C,V)-h^1(C,V)  = d + r(1-g)
$$
where $((C,x_1,\ldots,x_n), V)$ is any object in $\fB(\bullet)$. 

Finally, we consider the moduli 
$$
\fP =  \Spec_{\fB}\Sym \left(R^1\pi_{\fB*} \big( \cE_{\fB} \oplus \cE_{\fB}^\vee\otimes \omega_{\fC_{\pi_{\fB}}}\big)\right)
$$
The objects in the groupoid $\fP(\bullet)$ are 4-tuples $((C,x_1,\ldots,x_n),V,q,p)$ where
$((C,x_1,\ldots,x_n),V,q)$ is an object in $\fS(\bullet)$ and $p \in H^0(C, V^\vee\otimes \omega_C)$. There is
a morphism $\pi_{\fP/\fS}:\fP\to\fS$ given by forgetting $p$. There is a relative dual perfect theory $\bE^\bullet_{\fP/\fS}$ given by
$$
\bE^{\vee}_{\fP/\fS} = \pi^*_{\fP/\fB}R^\bullet\pi_* \left(\cE_{\fB}^\vee \otimes \omega_{\fC_{\fB}/\fB}\right).
$$
The relative virtual dimension is
$$
d^\vir_{\fP/\fS}=  h^0(C,V^\vee\otimes \omega_C) - h^1(V^\vee\otimes \omega_C) =  -d + r(g-1).
$$
The relative virtual dimension of $\pi_{\fP/\fB}:\fP\to \fB$ is $d^\vir_{\fP/\fB} =0$.  A perfect dual obstruction theory of $\pi_{\fP/\fB}$ is given by
\[
\bE^{\vee}_{\fP/\fB}  = \pi^*_{\fP/\fB}R^{\bullet}\pi_{\fB*} \left(\cE_{\fB}\oplus \cE_{\fB}^\vee \otimes \omega_{\pi_{\fB}}\right)
\]
and is compatible with $\bE^{\vee}_{\fP/\fS}$ and $\bE^{\vee}_{\fS/\fB}$.

We now define a relative cosection
\[
\si_{\fP/\fB}: \Ob_{\fP/\fB}\lra   \cO_{\fP}
\]
as follows. We have a map of vector bundles over $\fC_{\fB}$ 
\[
\cE_{\fB}\oplus \cE_{\fB}^{\vee}\otimes\omega_{\pi_{\fB}}\to \omega_{\pi_{\fB}}
\]
given by the dual pairing. Applying $\pi_{\fP/\fB}^*R^1\pi_{\fB *}$ to this map induces the relative cosection.
Given an object $\xi=((C,x_1,\ldots,x_n),F,q,p)$ in $\fP(\bullet)$, 
\begin{align*}
\si_{\fP/\fB}(\xi) : H^1(C,F)\oplus H^1(C,F^\vee\otimes \omega_C)& \lra H^1(C,\omega_C) =\bC\\
(\dot{q},\dot{p}) &\mapsto \langle  \dot{p},q\rangle + \langle p,\dot{q}\rangle
\end{align*}
where $\dot{q}\in H^1(C,F)$ and $\dot{p}\in H^1(C,F^\vee\otimes \omega_C)$ and the pairing is given by Serre duality. Then $\si_{\fP/\fB}(\xi)=0$ if and only if $q=p=0$. The degeneracy locus of the relative cosection is the zero section of the cone $\fP\to \fB$: 
\[
D(\si_{\fP/\fB} )= \{ (\si_{\fP/\fB})(\xi)=0\} = \fB \subset \fP.
\]
Unfortunately, even though the morphism $\fP\to\fB$ is representable, we cannot use the relative perfect obstruction theory to construct a cosection-localized virtual class on $\fP$ as the construction in \cite{KL} would require the stack $\fP$ to be a DM stack itself. 

\subsection{Moduli of maps to $X$}\label{ss: potX}
The triple $(X,E,s)$ gives rise to the following commutative diagram
\begin{equation}\label{eqn:cXSB}
\xymatrix{
X \ar[r]^{\psi_s} \ar[dr]^{\psi_E} & [\bC^r/GL_r] \ar[d]\\
& BGL_r =[\bullet/GL_r]
}
\end{equation}
The moduli of sections $\fC_{\fM}\times X \to \fC_{\fM}$ is the moduli of genus $g$, $n$-pointed prestable maps to $X$:
$$
\fX_{g,n} =\Gamma((\fC_{\fM}\times X)/\fC_{\fM}).
$$
The quantum version of \eqref{eqn:cXSB} is
\begin{equation}\label{eqn:qXSB-I}
\xymatrix{
\fX_{g,n} = \Gamma((\fC_{\fM}\times X)/\fC_{\fM}) \ar[r] \ar[dr]  & \fS_{g,n,r}\ar[d]\\
& \Bun_{g,n,r}\ar[d] \\
&\fM= \fM_{g,n} 
}
\end{equation}
We now fix an effective curve class $\beta\in H_2(X;\bZ)$, and let $d= \langle c_1(E),\beta\rangle\in \bZ$.  Let $\fX_{g,n,\beta}$ be the moduli stack
of genus $g$, $n$-pointed, degree $\beta$ prestable maps to $X$. Then $\fX_{g,n,\beta}$ is an open and closed substack of the Artin stack
$\fX_{g,n}$, and the above diagram \eqref{eqn:qXSB-I} restricts to 
\begin{equation}\label{eqn:qXSB-II}
\xymatrix{
 \fX_{g,n,\beta}  \ar[r] \ar[dr] & \fS  \ar[d]\\
& \fB= \Bun_{g,n,r,d}\ar[d] \\
& \fM= \fM_{g,n}
}
\end{equation}
Let $\fX$ denote the open substack of  $\fX_{g,n,\beta}$ obtained by imposing the usual stability conditions for stable maps. The functor $\fX(\bullet) \lra \fS(\bullet)$ sends $((C,x_1,\ldots,x_n), f)$ to $(C,f^*E, f^*s)$.

\subsection{A perfect obstruction theory for $\pi_{\fX/\fB}: \fX\to \fB$}
Let $\pi_{\fX}: \fC_{\fX}\to \fX$ be the universal curve over $\fX$, and let $ev_{\fX}: \fC_{\fX}\to X$ be the universal evaluation map. The morphism $\fX\to\fB$ over $\fM$ is induced by fixing the torsor $ev_{\fX}^*P_E\to\fC_{\fX}$, which we denote $\cP_{\fX}$. Recall $\cP_{\fB}$ denotes the universal $GL_r$-torsor over $\fC_{\fB}$. We have the following diagram, where all the squares are Cartesian. 
\[
\xymatrix{
P_E\ar[d]& \ar[l] \cP_{\fX}\ar[r] \ar[d] & \cP_{\fB}\ar[d] \\
X & \ar[l]_{ev_\fX}\fC_{\fX}\ar[r]^{\overline{\pi_{\fX/\fB}}} \ar[d]^{\pi_{\fX}}& \fC_{\fB}\ar[d] \\
& \fX\ar[r]^{\pi_{\fX/\fB}} & \fB 
}.
\]

Having fixed these $GL_r$-torsors, we have the following commutative diagram

\begin{equation}\label{Tor1}
\xymatrix{
\fC_{\fX}\ar[d]_{\overline{\pi_{\fX/\fB}}}\ar[r]^{ev_{\fX}} & X\ar[d]^{\psi_E}\\
\fC_{\fB}\ar[r]^{ev_{\fB}} & BGL_r
}
\end{equation}
which induces a morphism of cotangent complexes
\[
ev_{\fX}^*\bL_{X/BGL_r} \to\bL_{\fC_{\fX}/\fC_{\fB}}=\pi_{\fX}^*\bL_{\fX/\fB}.
\]
Dualizing and pushing forward to we obtain:
\[
\phi_{\fX/\fB}^\vee: \bT_{\fX/\fB}\to R^{\bullet}\pi_{\fX*}\pi_{\fX}^*\bT_{{\fX}/{\fB}}\to R^{\bullet}\pi_{\fX*}ev_{\fX}^*A_E.
\]
 
\begin{proposition}\label{potX}
The morphism 
\[
\phi_{\fX/\fB}:  \bE_{\fX/\fB}=(R^{\bullet}\pi_{\fX*}ev_{\fX}^*A_E)^{\vee}\to\bL_{\fX/\fB}.
\]
is a perfect obstruction theory for $\pi_{\fX/\fB}$. 
\end{proposition}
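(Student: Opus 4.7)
The plan is to realize $\pi_{\fX/\fB}:\fX\to\fB$ as a moduli stack of sections of a smooth representable morphism over the universal curve $\fC_{\fB}$, and then invoke the general perfect-obstruction-theory construction for such moduli (in the style of \cite{CL}, Proposition 2.5, adapted to an Artin base).

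\textbf{Identification as a section stack.} Form the fibre product $\cZ := \fC_{\fB}\times_{BGL_r}X$, using the classifying map $ev_{\fB}:\fC_{\fB}\to BGL_r$ of the universal torsor $\cP_{\fB}$ and $\psi_E:X\to BGL_r$. Since $\psi_E$ is smooth and representable, so is $\cZ\to\fC_{\fB}$, and $\cZ\to\fC_{\fB}$ is quasi-projective because $X$ is projective. The Cartesian diagram \eqref{Tor1} exhibits the pair $(ev_{\fX},\overline{\pi_{\fX/\fB}})$ as a tautological $\fC_{\fB}$-section $\mathfrak{e}:\fC_{\fX}\to\cZ$, and the universal property identifies $\fX$ with the stack $\Gamma(\cZ/\fC_{\fB})$ of such sections over $\fB$.

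\textbf{Candidate obstruction theory.} Since $\cZ\to\fC_{\fB}$ is the base change of $\psi_E$, its relative tangent sheaf is the pullback of $\bT_{X/BGL_r}=A_E$; hence $\mathfrak{e}^*\bT_{\cZ/\fC_{\fB}}=ev_{\fX}^*A_E$. The general construction for moduli of sections then produces the candidate $\bE^{\vee}_{\fX/\fB}=R^{\bullet}\pi_{\fX*}ev_{\fX}^*A_E$, together with a map to $\bT_{\fX/\fB}$. A direct check shows this agrees with the $\phi_{\fX/\fB}^{\vee}$ displayed in the text: it is the unit of $(\pi_{\fX}^*,R\pi_{\fX*})$ composed with the isomorphism $\pi_{\fX}^*\bT_{\fX/\fB}\simeq \bT_{\fC_{\fX}/\fC_{\fB}}\simeq ev_{\fX}^*A_E$, where the first iso comes from the flat base change $\fC_{\fX}=\fX\times_{\fB}\fC_{\fB}$ and the second from \eqref{Tor1} being Cartesian.

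\textbf{Perfectness and verification.} Perfectness of amplitude $[-1,0]$ is immediate: $ev_{\fX}^*A_E$ is a vector bundle on $\fC_{\fX}$ and $\pi_{\fX}$ is a flat proper family of nodal curves, so $R^{\bullet}\pi_{\fX*}ev_{\fX}^*A_E$ has Tor-amplitude $[0,1]$ and its dual amplitude $[-1,0]$. The main work is verifying the obstruction-theory axioms, namely that $h^0(\phi_{\fX/\fB})$ is an isomorphism and $h^{-1}(\phi_{\fX/\fB})$ is surjective. I would check this via the infinitesimal lifting criterion: for a square-zero extension $T\hookrightarrow T'$ of $\fB$-schemes with ideal $J$, lifting a $T$-point of $\fX$ to a $T'$-point over $\fB$ is equivalent to lifting the associated section $\fC_T\to\cZ_T$ to $\fC_{T'}\to\cZ_{T'}$ over $\fC_{T'}$. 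Since $\cZ\to\fC_{\fB}$ is smooth with relative tangent $A_E$, standard deformation theory for sections of a smooth morphism places the obstruction in $H^1(C_T,ev_{\fX}^*A_E\otimes p_T^*J)$ and makes the torsor of liftings $H^0$ of the same sheaf, which is exactly what the POT above predicts. This last step is the principal technical point and is essentially a special case of the section-moduli result in \cite{CL}, Section 2.
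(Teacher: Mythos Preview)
Your approach is essentially identical to the paper's: identify $\fX$ with (an open substack of) the moduli of sections $\Gamma(\fC_{\fB}\times_{BGL_r}X/\fC_{\fB})$ and then invoke the general perfect-obstruction-theory construction for section stacks from \cite{CL}. The paper does exactly this, noting $\fC_{\fB}\times_{BGL_r}X\cong\cP_{\fB}\times_{GL_r}P_E$ to see smooth representability, and then restricts the POT of $\Gamma\to\fB$ to the open substack $\fX$.

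Two small corrections that do not affect the argument. First, $\fX$ is only the \emph{open} substack of $\Gamma$ cut out by the stability condition, not all of $\Gamma$; you should say this explicitly, since the identification with sections gives prestable maps. Second, diagram \eqref{Tor1} is commutative but \emph{not} Cartesian: the fibre product $\fC_{\fB}\times_{BGL_r}X$ is precisely your $\cZ$, and $\fC_{\fX}$ maps to it via the tautological section $\mathfrak{e}$. Consequently $\bT_{\fC_{\fX}/\fC_{\fB}}\to ev_{\fX}^*A_E$ is the differential of $\mathfrak{e}$, a morphism rather than an isomorphism; this is exactly what is needed to define $\phi_{\fX/\fB}^{\vee}$ by adjunction, but it is not the isomorphism you claim.
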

\begin{proof}
We can see this by identifying $\fX$ with an open substack of the moduli of sections $\Gamma\coloneqq \Gamma(\fC_{\fB}\times_{BGL_r} X/ \fC_{\fB})$. To see this, fix an object $\xi=((C,x_1,\dots ,x_n),F)\in\fB(\bullet)$. An object $\overline{\xi}\in\fX_{g,n,\beta}(\bullet)$ (recall these are prestable maps to $X$) in the fiber of $\xi$ is the additional datum of a morphism $f:C\to X$ such that $f^*E\cong F$. Let $\psi_F:C\to BGL_r$ be the morphism that $F$ represents. Consider the following Cartesian diagram
\[
\xymatrix{
C\times_{BGL_r}X\ar[r]\ar[d]& X\ar[d]^{\psi_E}\\
C\ar[r]_{\psi_F} & BGL_r.
}
\]
A morphism $f:C\to X$ such that $\psi_E\circ f=\psi_F$ is the same as a a section $\phi_f$ of the projection $C\times_{BGL_r}X\to C$ above. The same argument applies for objects $\xi\in\fB(T)$ over general schemes $T$, so this gives the required identification.
Observe that $\fC_{\fB}\times_{BGL_r} X\cong \cP_{\fB}\times_{GL_r} P_E$, so $\fC_{\fB}\times_{BGL_r} X\to\fC_{\fB}$ is a smooth representable morphism, in fact a smooth fibration with fiber $P_E$. Then the following is a perfect obstruction theory for $\Gamma\to\fB$
\[
\phi_{\Gamma/\fB}:\bE_{\Gamma/\fB}\coloneqq (R^\bullet\pi_{\Gamma *}\mathfrak{e}^*\Omega^\vee_{\fC_{\fB}\times_{BGL_r} X/\fC_{\fB}})^\vee \to \bL_{\Gamma/\fB},
\]
where $\mathfrak{e}:\fC_{\Gamma}\to \fC_{\fB}\times_{BGL_r} X$ is the universal evaluation. Over $\fC_{\fX}$, $\mathfrak{e}$ restricts to the map $\mathfrak{f}: \fC_{\fX}\to \fC_{\fB}\times_{BGL_r} X$ induced by the diagram in \cref{Tor1}. Then
\begin{align*}
\pi_{\fX/\Gamma}^* (R^\bullet\pi_{\Gamma *}\mathfrak{e}^*\Omega^\vee_{\fC_{\fB}\times_{BGL_r} X/\fC_{\fB}})^\vee & \cong (R^\bullet\pi_{\fX*}\overline{\pi_{\fX/\Gamma}}^*\mathfrak{e}^*\Omega^\vee_{\fC_{\fB}\times_{BGL_r} X/\fC_{\fB}})^\vee  \\
&  \cong (R^\bullet\pi_{\fX*}\mathfrak{f}^*\Omega^\vee_{\fC_{\fB}\times_{BGL_r} X/\fC_{\fB}})^\vee \\
& \cong (R^\bullet\pi_{\fX*}ev_{\fX}^*\Omega^\vee_{X/BGL_r})^\vee.
\end{align*}
So $\phi_{\Gamma/\fB}$ restricts to $\phi_{\fX/\fB}$ over $\fX$, which proves that the latter is a perfect obstruction theory.
\end{proof}

Let 
\[
\phi_{\fX/\fM}:  \bE_{\fX/\fM}=(R^{\bullet}\pi_{\fX*}ev_{\fX}^*T_X)^{\vee}\to \bL_{\fX/\fM}
\]
 denote the usual perfect obstruction theory for $\pi_{\fX/\fM}:\fX\to\fM$. Let $(C,f)\in\fX(\bullet)$, the virtual dimension of $\fX/\fM$ given by $\phi_{\fX/\fM}$ is 
 \[
 d^{\vir}_{\fX/\fM}=\chi(f^*T_X)=\langle \beta, c_1(X)\rangle + dim(X)(1-g)
 \]
 and the virtual dimension of the obstruction theory above is 
 \[ d^{\vir}_{\fX/\fB}=\chi(f^*A_E)=\chi(f^*T_X)+\chi(f^*(E\otimes E^{\vee}))= d^{\vir}_{\fX/\fM}-d_{\fB/\fM}
 \]
 by the short exact sequence in \cref{eqn:AE}. So the virtual fundamental classes induced by the two obstruction theories have the same degree $\langle \beta, c_1(X)\rangle+(dim(X)-3)(1-g)+n=:d^{\vir}_{\fX}$.

\begin{proposition}\label{potXcompare}
The virtual fundamental class $[\fX/\fB]^{\vir}$ induced by $\phi_{\fX/\fB}$ agrees with $[\fX/\fM]^{\vir}$ induced by $\phi_{\fX/\fM}$.
\end{proposition}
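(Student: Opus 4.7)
My plan is to exhibit $(\phi_{\fX/\fB}, \phi_{\fX/\fM})$ as a compatible pair of perfect obstruction theories in the sense of Behrend--Fantechi relative to the smooth morphism $\pi_{\fB/\fM}:\fB\to\fM$, and then appeal to the standard fact that compatibility in a smooth base-change situation forces the induced virtual fundamental classes on $\fX$ to coincide.

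The starting point is the Atiyah short exact sequence \cref{eqn:AE}. Pulling it back along $ev_{\fX}$ and pushing forward by $R\pi_{\fX*}$ yields a distinguished triangle
\[
R\pi_{\fX*}ev_{\fX}^*\End(E)\lra R\pi_{\fX*}ev_{\fX}^*A_E\lra R\pi_{\fX*}ev_{\fX}^*T_X\lra\cdots
\]
on $\fX$ whose middle and right terms are, by definition, $(\bE^\bullet_{\fX/\fB})^\vee$ and $(\bE^\bullet_{\fX/\fM})^\vee$. The key step is then to identify the leftmost term with $\pi_{\fX/\fB}^*\bT_{\fB/\fM}[-1]$. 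This uses that $\fB\to\fM$ represents the moduli of rank $r$ vector bundles on prestable curves, whose relative tangent complex at $((C,\underline{x}),V)$ is $R\Gamma(C,\End(V))[1]$; flat base change along the Cartesian square $\fC_{\fX}=\fC_{\fB}\times_{\fB}\fX$ combined with $\overline{\pi_{\fX/\fB}}^*\cE_{\fB}\cong ev_{\fX}^*E$ globalizes this identification.

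Rotating and dualizing produces a distinguished triangle $\pi_{\fX/\fB}^*\bL_{\fB/\fM}\to\bE^\bullet_{\fX/\fM}\to\bE^\bullet_{\fX/\fB}\to\cdots$ parallel to the canonical triangle $\pi_{\fX/\fB}^*\bL_{\fB/\fM}\to\bL_{\fX/\fM}\to\bL_{\fX/\fB}\to\cdots$ of cotangent complexes. In the induced comparison diagram, the leftmost vertical is the identity (the cotangent complex is itself a perfect obstruction theory for the smooth $\pi_{\fB/\fM}$), the right vertical is $\phi_{\fX/\fB}$, and the middle is $\phi_{\fX/\fM}$. Commutativity should then reduce to naturality of the cotangent complex along the tower of universal curves $\fC_{\fX}\to\fC_{\fB}\to\fC_{\fM}$. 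With compatibility in hand, Proposition~7.5 of \cite{BF} (or, equivalently, Manolache's virtual pullback formalism applied to the smooth morphism $\pi_{\fB/\fM}$) gives $[\fX/\fB]^{\vir}=[\fX/\fM]^{\vir}$.

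The main obstacle I anticipate is verifying that the middle square of the compatibility diagram commutes. The map $\phi_{\fX/\fM}$ is the standard stable-maps obstruction theory arising from the tautological $ev_{\fX}^*T_X\to \pi_{\fX}^*\bT_{\fX/\fM}$, whereas $\phi_{\fX/\fB}$ was constructed in \Cref{potX} through the classifying-space interpretation $X\to BGL_r$ of \cref{class_spaces}. Both ultimately descend from the same functoriality of the cotangent complex, so the requisite diagram should commute tautologically once representatives are chosen carefully, but pinning down the morphism of distinguished triangles with the correct shifts and signs is where most of the care is needed.
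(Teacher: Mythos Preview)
Your proposal is correct and follows essentially the same route as the paper: both build the morphism of distinguished triangles
\[
\xymatrix{
\bE_{\fX/\fM}\ar[r]\ar[d]^{\phi_{\fX/\fM}} & \bE_{\fX/\fB}\ar[r]\ar[d]^{\phi_{\fX/\fB}} & \pi_{\fX/\fB}^*\bL_{\fB/\fM}[1]\ar[d]^{\cong}\ar[r]^{\qquad +1} & \\
\bL_{\fX/\fM}\ar[r] & \bL_{\fX/\fB}\ar[r] & \pi_{\fX/\fB}^*\bL_{\fB/\fM}[1]\ar[r]^{\qquad +1} &
}
\]
from the Atiyah sequence and the identification $\bL_{\fB/\fM}[1]\simeq(R\pi_{\fB*}\End(\cE_{\fB}))^\vee$, and then use this compatibility to deduce equality of virtual classes. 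The only substantive difference is the final step: where you invoke \cite[Prop.~7.5]{BF} or Manolache's virtual pullback, the paper instead proves a short lemma (a relative intrinsic-normal-cone version of the usual compatibility statement, valid for a smooth morphism of Artin stacks $\fB\to\fM$ with $\fX\to\fB$ of DM type) yielding a short exact sequence of cone stacks $\pi_{\fX/\fB}^*\ffN\to\ccC_{\fX/\fB}\to\ccC_{\fX/\fM}$ and concludes directly via $0^!$. Your citation of Manolache is the cleaner shortcut here, since $\fB$ is a non-DM Artin stack and the original Behrend--Fantechi statement is formulated for DM stacks; the paper's hand-rolled lemma is precisely what fills that gap.
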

\begin{proof}
 This follows closely \cite{CL} Propositions 2.8 and 2.9.
We have the following commutative diagram of evaluation morphisms
\begin{equation}\label{comp_evals}
\xymatrix{
\fC_{\fX}\ar[d]_{(\overline{\pi_{\fX/\fM}},ev_\fX)} \ar[r]^{ \overline{\pi_{\fX/\fB}}}& \fC_{\fB}\ar[d]_{(\overline{\pi_{\fX/\fB}},ev_\fB)} \ar[r]^{\overline{\pi_{\fB/\fM}}} & \fC_{\fM}\ar[d]_{id}\\
\fC_{\fM}\times X\ar[r]_{(id, \psi_E)} & \fC_{\fM}\times BGL_r\ar[r]_{\qquad pr_1} & \fC_{\fM} 
}
\end{equation}
The first row of \cref{comp_evals} induces the distinguished triangle
\begin{equation}\label{triangleLXBM}
\xymatrix{
\pi_\fX^*\bL_{\fX/\fM} \ar[d]^{\cong}\ar[r]& \pi_\fX^*\bL_{\fX/\fB} \ar[d]^{\cong}\ar[r]& \overline{\pi_{\fX/\fB}}^*\pi_{\fB}^*\bL_{\fB/\fM}[1]\ar[d]^{\cong}\ar[r]^{\qquad\qquad+1}& \\
\bL_{\fC_{\fX}/\fC_{\fM}}\ar[r]& \bL_{\fC_{\fX}/\fC_\fB}\ar[r] & \overline{\pi_{\fX/\fB}}^*\bL_{\fC_\fB/\fC_\fM}[1]\ar[r]^{\qquad\qquad+1}& \\
}
\end{equation}
The second row of \cref{comp_evals} induces
\[
\bL_{\fC_{\fM}\times X/\fC_{\fM}}\to 	\bL_{\fC_{\fM}\times X/\fC_{\fM}\times BGL_r}\to (id,\psi_E)^*\bL_{\fC_{\fM}\times BGL_r/\fC_{\fM}}[1]\xrightarrow{+1}
\]
which, after pulling back to $\fC_{\fX}$ via $(\overline{\pi_{\fX/\fM}},ev_\fX)$, is isomorphic to

\begin{equation}\label{triangleEXBM}
ev_{\fX}^*\bL_{X}\to 	ev_{\fX}^*\bL_{X/BGL_r}\to ev_{\fX}^*\psi_E^*\bL_{BGL_r}[1]\xrightarrow{+1}
\end{equation}
As we detailed in \cref{class_spaces} the above triangle is quasi-isomorphic to
\[
0\to ev_{\fX}^*T_X^\vee \to ev_{\fX}^*A_E^{\vee}\to ev_{\fX}^*(\End(E))^\vee\to 0.
\]
There is a morphism of distinguished triangles between \cref{triangleEXBM} and \cref{triangleLXBM} induced by the differential of the evaluation map. After dualizing, pushing forward by $\pi_{\fX}$ and dualizing again we obtain:  
\begin{equation}\label{XBMtriangle}
\xymatrix{
\bE_{\fX/\fM}\ar[r]\ar[d]^{\phi_{\fX/\fM}}& \bE_{\fX/\fB}\ar[r]\ar[d]^{\phi_{\fX/\fB}} &  (R^\bullet\pi_{\fX *}\End(\cE_{\fX}))^\vee\ar[d]\ar[r]^{\qquad\qquad+1}& \\
\bL_{\fX/\fM} \ar[r]& \bL_{\fX/\fB}\ar[r]& {\pi_{\fX/\fB}}^*\bL_{\fB/\fM}[1]\ar[r]^{\qquad\qquad+1}& .
}
\end{equation}
The last vertical arrow is a quasi-isomorphism, noting that 
\[
(R^\bullet\pi_{\fX *}\End(\cE_{\fX}))^\vee\cong {\pi_{\fX/\fB}}^*(R^\bullet\pi_{\fB *}\End(\cE_{\fB}))^\vee
\]
by cohomology and base change and that $\bL_{\fB/\fM}[1]\cong (R^\bullet\pi_{\fB *}\End(\cE_{\fB}))^\vee$. Let
 $\ffN$ denote the vector bundle stack $\cs(R^\bullet\pi_{\fB *}\End(\cE_{\fB}))$. By \cref{conelemma} below, the bottom triangle of \cref{XBMtriangle} induces a short exact sequence of cone stacks 
 \[
 \pi^*_{\fX/\fB}\ffN\to \ccC_{\fX/\fB}\to \ccC_{\fX/\fM}
 \]
 where $\ccC_{\fX/\fB}$ is the intrinsic normal cone of the morphism $\pi_{\fX/\fB}$, and similarly for $\ccC_{\fX/\fM}$. The top row of \cref{XBMtriangle} gives a similar short exact sequence of vector bundle stacks, and we have
 \begin{equation*}
 \xymatrix{
  \pi^*_{\fX/\fB}\ffN\ar[r]\ar[d]^{\cong}& \ccC_{\fX/\fB}\ar[r]\ar@{^{(}->}[d]& \ccC_{\fX/\fM}\ar@{^{(}->}[d]\\
 \pi^*_{\fX/\fB}\ffN \ar[r]&  E_{\fX/\fB}\ar[r] &  E_{\fX/\fM}}
 \end{equation*}
  where $E_{\fX/\fB}=\cs (\bE_{\fX/\fB})$ and similarly for $E_{\fX/\fM}$. Recall that by definition short exact sequences of cone stacks are locally split. Then \[
  [\fX/\fB]^{\vir}=0^!_{E_{\fX/\fB}}([\ccC_{\fX/\fB}])=0^!_{E_{\fX/\fM}}([\ccC_{\fX/\fM}])=[\fX/\fM]^{\vir}.
  \]
  
 \end{proof}
 \begin{lemma}\label{conelemma}
 Let 
$$
\fX \stackrel{\pi_{\fX/\fB}}{\lra} \fB \stackrel{\pi_{\fB/\fM}} {\lra} \fM.
$$
be morphisms of Artin stacks. Assume that $\fM$ is smooth and $\pi_{\fB/\fM}$ is smooth. (So $\fB$ is smooth.)  Let $\pi_{\fX/\fM} = \pi_{\fB/\fM}\circ \pi_{\fX/\fB}$.
Assume that $\pi_{\fX/\fB}$ and $\pi_{\fX/\fM}$ are of DM type. Let $\ccC_{\fX/\fB}$ and $\ccC_{\fX/\fM}$ denote the relative intrinsic normal cones.
Then we have the following short exact sequence of cone stacks over $\fX$:
$$
h^1/h^0(\pi_{\fX/\fB}^* \left(\bL_{\fB/\fM}[1]\right)^\vee ) \to \ccC_{\fX/\fB} \to \ccC_{\fX/\fM}.
$$
 \end{lemma}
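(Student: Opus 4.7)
The plan is to derive the desired sequence from the standard distinguished triangle of cotangent complexes for the composition $\fX\xrightarrow{\pi_{\fX/\fB}}\fB\xrightarrow{\pi_{\fB/\fM}}\fM$, namely
\[
\pi_{\fX/\fB}^*\bL_{\fB/\fM}\to\bL_{\fX/\fM}\to\bL_{\fX/\fB}\xrightarrow{+1}.
\]
Since $\pi_{\fB/\fM}$ is smooth, $\bL_{\fB/\fM}$ is a locally free sheaf concentrated in degree $0$, so truncation to $\tau_{\geq -1}$ is innocuous on the first term. Dualizing and rotating produces
\[
\pi_{\fX/\fB}^*(\bL_{\fB/\fM}[1])^\vee\to\bL_{\fX/\fB}^\vee\to\bL_{\fX/\fM}^\vee\xrightarrow{+1},
\]
a distinguished triangle of complexes supported in non-negative degrees. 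Applying the functor $\cs$ to the relevant $\tau_{\geq -1}$ truncations (as in the construction of the intrinsic normal sheaf in \cite{BF}) converts this triangle into a short exact sequence of abelian cone stacks over $\fX$:
\[
\cs(\pi_{\fX/\fB}^*(\bL_{\fB/\fM}[1])^\vee)\to\mathfrak{N}_{\fX/\fB}\to\mathfrak{N}_{\fX/\fM}.
\]

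Next, I would show that this sequence restricts to the intrinsic normal cones. Since $\pi_{\fX/\fB}$ is of DM type and $\fB$ is smooth, every point of $\fX$ admits an étale scheme chart $U\to\fX$ equipped with a factorization $U\hookrightarrow V'\to\fB$, where the first map is a closed immersion and the second is smooth. Composing with $\pi_{\fB/\fM}$, the morphism $V'\to\fM$ is smooth as well, so the \emph{same} $V'$ serves as a local factorization of $U\to\fM$. The intrinsic normal cones then admit the local presentations
\[
\ccC_{\fX/\fB}|_U=[C_{U/V'}/T_{V'/\fB}|_U],\qquad\ccC_{\fX/\fM}|_U=[C_{U/V'}/T_{V'/\fM}|_U],
\]
crucially based on the \emph{same} classical normal cone $C_{U/V'}$.

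Finally, the smooth tower $V'\to\fB\to\fM$ gives a short exact sequence of tangent bundles on $V'$:
\[
0\to T_{V'/\fB}\to T_{V'/\fM}\to T_{\fB/\fM}|_{V'}\to 0.
\]
Restricting to $U$ and quotienting exhibits $\ccC_{\fX/\fM}|_U$ as the further quotient of $\ccC_{\fX/\fB}|_U$ by $T_{\fB/\fM}|_U$. The latter bundle is identified with $\cs(\pi_{\fX/\fB}^*(\bL_{\fB/\fM}[1])^\vee)|_U$ (since $\bL_{\fB/\fM}$ is locally free in degree $0$), yielding the desired short exact sequence of cone stacks on $U$. The main subtlety is ensuring these local descriptions glue: this follows from the canonical (chart-independent) nature of the intrinsic normal cone together with the functoriality of the cotangent-complex triangle, giving a globally well-defined short exact sequence of cone stacks.
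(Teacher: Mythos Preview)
Your proposal is correct and follows essentially the same approach as the paper: both arguments pass from the cotangent-complex distinguished triangle to a short exact sequence of intrinsic normal sheaves via $\cs$, then verify the restriction to the intrinsic normal cones by choosing a single local smooth embedding $U\hookrightarrow V'$ over $\fB$ (which automatically serves as one over $\fM$), and conclude using the tangent-bundle sequence for $V'\to\fB\to\fM$. The paper adds one explicit sentence you leave implicit---namely that any local embedding of $\fX$ over $\fM$ can be refined to one over $\fB$ by base-changing along $\fB\to\fM$---but this is precisely the ``chart-independence'' you invoke in your gluing remark.
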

 \begin{proof}
We have a distinguished triangle
\[
\pi_{\fX/\fB}^*\bT_{\fB/\fM}[-1]\to\bT_{\fX/\fB}\to\bT_{\fX/\fM}
\]
so a short exact sequence
\[
h^1/h^0(\pi_{\fX/\fB}^*\bT_{\fB/\fM}[-1])\to \fN_{\fX/\fB}\to\fN_{\fX/\fM}.
\]
where $ \fN_{\fX/\fB}$ and $ \fN_{\fX/\fM}$ denote the relative intrinsic normal sheaves. 

The intrinsic normal cone $\ccC_{\fX/\fB}$ is the unique subcone of $\fN_{\fX/\fB}$ such that $\ccC_{\fX/\fB}\times_\fX U\cong[C_{U/Y}/i^*T_{Y/\fB}]$ for any local embedding of $\fX$ into $\fB$ given by a commutative diagram
\begin{equation}\label{UYB}
\xymatrix{
U\ar[r]^{i} \ar[d]_{p}&Y\ar[d]\\
\fX\ar[r]&\fB
}
\end{equation}
where $i$ is a closed embedding, $U$, $Y$ are schemes with $Y$ smooth over $\fB$ and $U$ \'{e}tale over $\fX$. This is Proposition 2.13 of \cite{VPB}. 
Since $\fB\to\fM$ is smooth, $Y\to\fM$ is smooth also and $\ccC_{\fX/\fM}\times_{\fX}U\cong[C_{U/Y}/i^*T_{Y/\fM}]$. On the other hand, given any local embedding of $\fX$ in $\fM$, ie. a diagram similar to \eqref{UYB}: 
\begin{equation}\label{UYM}
\xymatrix{
U'\ar[r]^{j} \ar[d]&Y'\ar[d]\\
\fX\ar[r]&\fM
}
\end{equation}
we can reduce it to \eqref{UYB} by setting $Y=Y'\times_\fM\fB$, $U=U'$ with $U\to Y$ induced by $U'\to\fX\to\fB$ and $U'\to Y'$. So we only have to show the short exact sequence in question holds when restricted to $U$ for any diagram of the form \eqref{UYB}, which follows from the distinguished triangle $p^*\pi^*_{\fX/\fB}\bT_{\fB/\fM}[-1]\to i^*T_{Y/\fB}\to i^*T_{Y/\fM}$.

\end{proof}

\subsection{Moduli of maps to $X$ with fields}\label{ss:fields}
Let
$$
\fX^E_{g,n,\beta} ={\Spec}_{\fX}\Sym \,R^1\pi_{\fX*}(\cE_{\fX})
$$
be the moduli of genus $g$, $n$-pointed, degree $\beta$ maps to $X$ with a $p$-field. Objects in 
the groupoid $\fX^E(\bullet)$ are triples $((C,x_1,\ldots,x_n), f, p)$ where $((C,x_1,\ldots, x_n), f)$ is an object in $\fX(\bullet)$ and
$p\in H^0(C,f^*E^\vee\otimes\omega_C)$. The diagram \eqref{eqn:qXSB-I} is extended to the following diagram:
\begin{equation}\label{qXSBP}
\xymatrix{
\fX^E:=\fX^E_{g,n,\beta} \ar[r] \ar[d] & \fP =\fP_{g,n,r,d}\ar[d]\\
\fX:= \fX_{g,n,\beta}  \ar[r] \ar[dr] & \fS=\fS_{g,n,r,d}  \ar[d]\\
& \fB= \Bun_{g,n,r,d}\ar[d] \\
& \fM= \fM_{g,n}
}
\end{equation}
From its definition, we see that $\fX^E$ has a dual relative perfect obstruction theory over $\fX$ given by 
\[
\bE_{\fX^E/\fX}^\vee = R^\bullet\pi_{\fX^E *}(\cE_{\fX^E}^\vee\otimes\omega_{\pi_{\fX^E}}).
\]

We can identify $\fX^E\to\fB$ with an open substack of the moduli of sections 
\[
\Gamma\left((\fC_{\fB}\times_{BGL_r}X)\times_{\fC_{\fB}}\Vb(\cE_{\fB}^{\vee}\otimes\omega_{\pi_{\fB}})\right).
\]
 Similarly to \cref{potX}, $\pi_{\fX^E/\fB}$ has a dual perfect obstruction theory given by 
\begin{align*}
\bE_{\fX^E/\fB}^{\vee}&=R^\bullet\pi_{\fX^E*}\left(ev^*A_E\oplus\cE_{\fX^E}^\vee\otimes\omega_{\pi_{\fX^E}}\right)
\end{align*}
The morphism $(\phi_{\fX^E/\fB})^{\vee}:\bT_{\fX^E/\fB}\to\bE^{\vee}_{\fX^E/\fB}$ is induced by the differential of the universal evaluation morphisms.
This perfect obstruction theory is just 
\[
\bE_{\fX^E/\fB}^{\vee}=\pi_{\fX^E/\fX}^*\bE_{\fX/\fB}^\vee\oplus R^\bullet\pi_{\fX^E *}(\cE_{\fX^E}^\vee\otimes\omega_{\pi_{\fX^E}}).
\]

In the sections below, we will construct a cosection of the (relative) perfect obstruction theory of $\fX^E$. Since \cref{thm:coslocvir} crucially needs a cosection of the absolute obstruction sheaf, we will need to check that our cosection lifts to a cosection of the absolute obstruction sheaf. 

\subsection{The cosection.}\label{ss: cosection}
The section $s\in H^0(X,E)$ induces $\psi_s:X\to U_r=[\bC^r/GL_r]$ which is a morphism over $BGL_r$. Pulling back by $\fC_{\fB}\to BGL_r$ gives
\[
\xymatrix{
\fC_{\fB}\times_{BGL_r}X\ar[r]^{\;\; \mathfrak{s}}\ar[dr]& \Vb(\cE_{\fB})\ar[d]\\
&\fC_{\fB}.
}
\]
Since $\fX$ and $\fS$ are the moduli of sections of $\fC_{\fB}\times_{BGL_r}X$ and $\Vb(\cE_{\fB})$ respectively, $\mathfrak{s}$ induces both the morphism  $\pi_{\fX/\fS}$ from \cref{eqn:qXSB-II} and a morphism 
\[
\delta\mathfrak{s}:\bE_{\fX/\fB}^\vee= R^\bullet\pi_{\fX *}(ev_{\fX}^*A_E) \to \pi_{\fX/\fS}^*\bE_{\fS/\fB}^\vee=R^\bullet\pi_{\fX *}(\cE_\fX)
\]
which by \cref{deltas} is just $R^{\bullet}\pi_{\fX *}(ev_{\fX}^*\delta s)$ for $\delta s: \bT_{X/BGL_r}\cong A_E\to E\cong \psi_s^*\bT_{U_r/BGL_r}$.
Similarly, the top row of \cref{eqn:qXSB-II} is induced by 
\[
(\mathfrak{s},id): (\fC_{\fB}\times_{BGL_r}X)\times_{\fC_{\fB}}\Vb(\cE_{\fB}^{\vee}\otimes\omega_{\fC_{\fB}/\fB})\to\Vb(\cE_{\fB})\times_{\fC_{\fB}}\Vb(\cE_{\fB}^{\vee}\otimes\omega_{\fC_{\fB}/\fB}).
\]
This also gives 
\[
\bE_{\fX^E/\fB}^{\vee}\to\pi_{\fX^E/\fP}^*\bE^{\vee}_{\fP/\fB}.
\]
Composing the resulting $\Ob_{\fX^E/\fB}\to\pi_{\fX^E/\fP}^*\Ob_{\fP/\fB}$ with the pullback of the cosection $\sigma_{\fP/\fB}$ of  \cref{ss: SandP} gives relative cosection $\sigma_{\fX^E/\fB}:\Ob_{\fX^E/\fB} \lra \cO_{\fX^E}$.

This relative cosection agrees with the one that can be constructed directly from the morphism
\begin{align*}
h_{\fX^E/\fB}:& (\fC_{\fB}\times_{BGL_r}X)\times_{\fC_{\fB}}\Vb(\cE_B^\vee\otimes\omega_{\fC_{\fB}/\fB})\to \Vb(\omega_{\fC_{\fB}/\fB}),\qquad (z,p)\mapsto p\cdot \mathfrak{s}(z).
\end{align*}
By construction of the relative perfect obstruction theory $\phi_{\fX^E/\fB}$, $h_{\fX^E/\fB}$ gives a map of complexes
\begin{equation}\label{cosecXEB}
\sigma_{\fX^E/\fB}^\bullet: \bE_{\fX^E/\fB}^{\vee}\to R^\bullet\pi_{\fX^E*}\omega_{\fC_{\fX^E}/\fX^E}.
\end{equation}
And $\sigma_{\fX^E/\fB}:\Ob_{\fX^E/\fB} \lra \cO_{\fX^E}$ is $h^1(\sigma_{\fX^E/\fB}^\bullet)$.

\begin{proposition}
Let $\xi=[(C,x_1,\dots,x_n),f,p]\in\fX^E$ be a closed point. The cosection above has local expression
\[
\sigma_{\fX^E/\fB}|_\xi: H^1(f^*A_E)\oplus H^1(f^*E^\vee\otimes\omega_C)\to\bC
\]
given by $(\dot{z},\dot{p})\mapsto \langle f^*\delta s(\dot{z}), p\rangle +\langle f^*s,\dot{p}\rangle$ where the brackets are given by Serre's duality. 

\end{proposition}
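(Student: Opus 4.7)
The plan is to unwind the construction of $\sigma_{\fX^E/\fB} = h^1(\sigma^\bullet_{\fX^E/\fB})$ step by step, computing the differential of $h_{\fX^E/\fB}$ explicitly and then invoking cohomology and base change to restrict to the closed fiber over $\xi$.

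First, I would compute the vertical differential of the map $h_{\fX^E/\fB}$. Writing $h_{\fX^E/\fB}$ locally as the bilinear pairing $(z,p)\mapsto p\cdot \mathfrak{s}(z)$, the product rule gives a morphism of vector bundles on $(\fC_{\fB}\times_{BGL_r} X)\times_{\fC_{\fB}}\Vb(\cE_{\fB}^{\vee}\otimes\omega_{\fC_{\fB}/\fB})$
\[
dh : \bT_{(\fC_{\fB}\times_{BGL_r} X)/\fC_{\fB}} \oplus (\cE_{\fB}^{\vee}\otimes\omega_{\fC_{\fB}/\fB}) \;\lra\; \omega_{\fC_{\fB}/\fB},\qquad (\dot z,\dot p)\longmapsto p\cdot \delta\mathfrak{s}(\dot z)+\mathfrak{s}(z)\cdot \dot p,
\]
where $\delta\mathfrak{s}$ is the fiberwise pullback of $\delta s:A_E\to E$ from \eqref{deltas} under the identification $\bT_{(\fC_{\fB}\times_{BGL_r} X)/\fC_{\fB}}\cong A_E$ along the universal $P_E$-fiber.

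Next, I would pull back along $\mathfrak{f}:\fC_{\fX^E}\to\fC_{\fB}\times_{BGL_r}X$ and its natural lift to $\Vb(\cE_{\fX^E}^\vee\otimes\omega)$, as in the proof of Proposition \ref{potX}. This produces a morphism of vector bundles on $\fC_{\fX^E}$
\[
ev_{\fX^E}^*A_E\;\oplus\;\cE_{\fX^E}^\vee\otimes\omega_{\fC_{\fX^E}/\fX^E}\;\lra\;\omega_{\fC_{\fX^E}/\fX^E},\qquad (\dot z,\dot p)\longmapsto p\cdot ev_{\fX^E}^*(\delta s)(\dot z)+ev_{\fX^E}^*(s)\cdot\dot p.
\]
Applying $R^\bullet\pi_{\fX^E*}$ to this morphism is by construction $\sigma^\bullet_{\fX^E/\fB}$, since the perfect obstruction theory $\bE_{\fX^E/\fB}^{\vee}$ was defined precisely as $R^\bullet\pi_{\fX^E*}$ of the left-hand side, and $R^\bullet\pi_{\fX^E*}\omega_{\fC_{\fX^E}/\fX^E}\simeq\cO_{\fX^E}[-1]$ by Serre duality.

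Finally, taking $h^1$ and restricting to the closed point $\xi$ via cohomology and base change exchanges $R^1\pi_{\fX^E*}$ with $H^1(C,-)$, so the fiber of $\sigma_{\fX^E/\fB}$ at $\xi$ is the map on $H^1$ induced by the pointwise bundle morphism on $C$ above. Under the canonical Serre duality pairings $H^1(f^*E)\otimes H^0(f^*E^{\vee}\otimes\omega_C)\to\bC$ and $H^1(f^*E^\vee\otimes\omega_C)\otimes H^0(f^*E)\to\bC$, this recovers exactly the formula $(\dot z,\dot p)\mapsto \langle f^*\delta s(\dot z),p\rangle+\langle f^*s,\dot p\rangle$. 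The only nontrivial step is the product-rule computation of $dh$; everything else is formal base change and functoriality of the perfect obstruction theory.
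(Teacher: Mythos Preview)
Your argument is correct and follows essentially the same route as the paper: compute the vertical differential of $h_{\fX^E/\fB}$ by the product rule, pull back along the universal evaluation, and take $h^1$; the paper merely performs the restriction to the fiber over $\xi$ first and then differentiates on $C$, whereas you work globally on $\fC_{\fX^E}$ and invoke base change at the end. One small slip: $R^\bullet\pi_{\fX^E*}\omega_{\fC_{\fX^E}/\fX^E}$ is not $\cO_{\fX^E}[-1]$ in general (the Hodge bundle sits in degree $0$), but this is harmless since you only use $R^1\pi_{\fX^E*}\omega\cong\cO_{\fX^E}$ after taking $h^1$.
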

\begin{proof}
Follows from the description of the cosection $\sigma_{\fP/\fB}$ and the observation that $\delta\mathfrak{s}$ is given by $R^{\bullet}\pi_{\fX *}(ev_{\fX}^*\delta s)$.
We show this explicitly, to illustrate how the cosection arises from the morphism $h_{\fX^E/\fX}$.
Let $C\to BGL_r$ be the morphism induced by $f^*E$.
We have 
\[
h_{\fX^E/\fB}|_\xi: (C\times_{BGL_r}X)\times_C\Vb(f^*E^\vee\otimes\omega_C)\to\Vb(\omega_C).
\]
This is induced by $X\xrightarrow{\psi_s} U_r$, which gives $C\times_{BGL_r}X\to\Vb(f^*E)$ and the pairing between dual vector spaces in the fibers. The evaluation morphism of $C$ into $ (C\times_{BGL_r}X)\times_C\Vb(f^*E^\vee\otimes\omega_C)$ is given by $ev_C=((id_C,f),p)$. Then $ev_C^*h_{\fX^E/\fB}|_{\xi}$ is simply
\[
C\to\Vb(f^*E)\times_C\Vb(f^*E^\vee\otimes\omega_C)\to\Vb(\omega_C)
\] 
with the first arrow given by the sections $f^*s$ and $p$ and the second by fiber-wise dual pairing. 
Following the definition of the cosection, we consider
\begin{equation}\label{dihacca}
ev_C^*(dh_{\fX^E/\fB}|_{\xi}): ev_C^*\bT_{(C\times_{BGL_r}X)\times_C (\Vb(f^*E^\vee\otimes\omega_C)/C}\to ev_C^* h_{\fX^E/\fB}^*\bT_{\Vb(\omega_C)/C}.
\end{equation}
Observe that $(id_C,f)^*\bT_{C\times_{BGL_r}X/C}$ is quasi isomorphic to $f^*A_E$. Recall that the differential of $\psi_s$ is the morphism $\delta s:A_E\to E$ from \cref{deltas}. Then we see that \cref{dihacca} is quasi isomorphic to the morphism of sheaves over $C$
\[
\sigma: f^*A_E\oplus f^*E^\vee\otimes\omega_C\to \omega_C
\]
 given by $\langle f^*(\delta s), p\rangle+\langle\dot{p}, f^*s\rangle$. The brackets are defined by the pairing $\langle\cdot,\cdot\rangle: V\otimes  H^0(C,V^\vee)\to \cO_C$ for a locally free sheaf $V$ on $C$. Since $\sigma_{\fX^E/\fB}|_\xi$ is by definition $H^1(\sigma)$ we are done.
\end{proof}

This local expression allows to easily compute the degeneracy locus
\[
D(\sigma_{\fX^E/\fB})\coloneqq \{ \xi\in\fX^E | \sigma_{\fX^E/\fB}|_\xi:\Ob_{\fX^E/\fB}\to\cO_{\fX^E}\otimes k(\xi) \; \mbox{is trivial}\}.
\]
Let $\fZ_{g,n,\beta'}$ be the moduli stack of genus $g$, $n$-pointed, degree $\beta'$ prestable maps to $Z =Z(s)$; it is an open and closed substack of
$$
\fZ_{g,n} =\Gamma((\fC_{\fM}\times Z)/\fC_{\fM}).
$$
Define
$$
\fZ =\bigcup_{\substack{ \beta'\in H_2(Z;\bZ)\\ i_*\beta'=\beta} } \fZ_{g,n,\beta'}.
$$
\begin{proposition} \label{prop: degeneracy}The degeneracy locus of the cosection is 
$D(\sigma_{\fX^E/\fB})\cong\fZ$.
\end{proposition}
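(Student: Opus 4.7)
\emph{Proof proposal.} The plan is to compute the degeneracy locus pointwise using the local expression for $\sigma_{\fX^E/\fB}|_\xi$ from the preceding proposition. At a closed point $\xi=[(C,\underline{x}),f,p]$ the cosection is the linear map
\[
\sigma_{\fX^E/\fB}|_\xi(\dot z,\dot p)= \langle f^*\delta s(\dot z), p\rangle + \langle f^*s,\dot p\rangle
\]
on the direct sum $H^1(C,f^*A_E)\oplus H^1(C,f^*E^\vee\otimes\omega_C)$, so it vanishes identically iff each of the two terms vanishes separately for all inputs.

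First I would analyze the component $\dot p \mapsto \langle f^*s,\dot p\rangle$. This is the Serre duality pairing
\[
H^0(C,f^*E)\times H^1(C,f^*E^\vee\otimes\omega_C)\lra \bC,
\]
which is perfect, so its vanishing for all $\dot p$ is equivalent to $f^*s=0$, i.e. to $f$ factoring through the closed subvariety $Z=Z(s)$.

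Next, assuming $f$ factors through $Z$, I would treat $\dot z \mapsto \langle f^*\delta s(\dot z), p\rangle$. The key input from \cref{class_spaces} is that regularity of $s$ forces $\delta s:A_E\to E$ to be a surjective morphism of vector bundles on $X$. Pulling back along $f$ gives a surjection $f^*\delta s:f^*A_E\twoheadrightarrow f^*E$ of vector bundles on the curve $C$, and since $C$ is one-dimensional the long exact sequence in cohomology yields a surjection $H^1(f^*\delta s):H^1(C,f^*A_E)\twoheadrightarrow H^1(C,f^*E)$. Hence the pairing vanishes on every $\dot z$ iff $p$ annihilates the entire space $H^1(C,f^*E)$, which by Serre duality again is equivalent to $p=0$.

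Combining both analyses, $D(\sigma_{\fX^E/\fB})$ consists exactly of triples $[(C,\underline{x}),f,0]$ with $f$ factoring through $Z$, i.e. the image of the canonical closed immersion $\fZ\hookrightarrow \fX^E$ sending $[(C,\underline{x}),g]$ to $[(C,\underline{x}),i\circ g,0]$, yielding the claimed isomorphism. I do not anticipate any substantive difficulty; the only delicate point is that the scheme-theoretic identification (not merely a bijection on points) follows by matching the universal properties of the two sides, and the argument relies essentially on the regularity of $s$, without which nonzero $p$ lying in an annihilator could produce spurious points of $D(\sigma_{\fX^E/\fB})$.
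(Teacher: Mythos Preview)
Your proposal is correct and follows essentially the same route as the paper: both arguments use the pointwise local expression of the cosection, set one input to zero and invoke perfectness of the Serre duality pairing to force $f^*s=0$, then use surjectivity of $\delta s$ (from regularity of $s$) together with the long exact sequence on the curve to force surjectivity of $H^1(f^*\delta s)$ and hence $p=0$. Your presentation is if anything slightly cleaner in that you directly deduce $p=0$ from $p$ annihilating all of $H^1(C,f^*E)$ via Serre duality, whereas the paper phrases this as a contradiction argument.
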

\begin{proof}
Let  $\xi=[(C,x_1,\dots,x_n),f,p]\in\fX^E$. We show $ \langle f^*\delta s(\dot{z}), p\rangle +\langle f^*s,\dot{p}\rangle=0$ for all $\dot{p},\dot{z}$ if and only if $f^*s=0$ and $p=0$. The if direction is clear. Conversely, if $\xi\in D(\sigma_1)$, $f^*s=0$ follows from picking $\dot{z}=0$, $\dot{p}\neq 0$. Then $f:C\to X$ factors as 
\[
C\xrightarrow{g} Z\xhookrightarrow{i} X.
\]
Suppose $p\neq 0$. Then, $H^1(C,f^*E)\neq0$. We just need to show that $H^1(f^*\delta s)$ is surjective, then by Serre's duality we can always pick $\dot{z}$ such that $\langle f^*\delta s(\dot{z}), p\rangle\neq 0$ and $\dot{p}=0$. But we observed that since $s$ is a regular section, $\delta s$ is surjective. So $H^1(f^*\delta s)$ is surjective, as we can see from the long exact sequence induced by $0\to\ker(f^*\delta s)\to f^*A_E\to f^*E\to 0$.
\end{proof}
In other words, $\fZ = \fX^E \times_{\fP} \fB$, where $\fB\to \fP$ is inclusion of the zero section.


\subsection{The cosection factors}\label{ss:lift}

We wish to use this cosection to define a cosection-localized virtual class, following \cite{KL}. To apply the construction, we need to show that the relative cosection $\sigma_{\fX^E/\fB}$ factors through the morphism $\Ob_{\fX^E/\fB}\to\Ob_{\fX^E}$. The absolute obstruction sheaf is defined as follows. Consider 
\[
\nu: \pi_{\fX^E/\fB}^*\bT_{\fB}[-1]\to\bT_{\fX^E/\fB}\xrightarrow{\phi_{\fX^E/\fB}^\vee}\bE_{\fX^E/\fB}^{\vee}
\]
then $\Ob_{\fX^E}=\mbox{Coker}( h^1(\nu))$. Following \cref{rklift}, it suffices to show that 
\begin{equation}\label{vanishlift}
h^1( \sigma^{\bullet}_{\fX^E/\fB}\circ \phi_{\fX^E/\fB})=0
\end{equation}
This is a standard argument, which originally appears in \cite[Section 3.4]{CL}.

Let $\ffH=\pi_{\fB*}\omega_{\pi_{\fB}}$ the Hodge bundle over $\fB$. The morphism $h_{\fX^E/\fB}$ from which we derive the cosection induces $\fX^E\to\ffH$ and a commutative diagram
\begin{equation}\label{coslift}
\begin{tikzcd}
\bT_{\fX^E/\fB} \arrow[r] \arrow[d, "\phi_{\fX^E/\fB}^{\vee}"]    & \pi^*_{\fX^E/\ffH} \bT_{\ffH/\fB} \arrow[d]                        \\
\bE^{\vee}_{\fX^E/\fB} \arrow[r, "\sigma^{\bullet}_{\fX^E/\fB}"] &  \pi^*_{\fX^E/\ffH}R^{\bullet}\pi_{\ffH *}\omega_{\fC_{\ffH}/\ffH}
\end{tikzcd}
\end{equation}
Then \cref{vanishlift} follows from observing $\bT_{\ffH/\fB}$ is concentrated in degree 0 since $\ffH$ is a vector bundle over $\fB$ so in particular $\ffH\to\fB$ is smooth and representable.

\subsection{Virtual classes on $\fZ$.}
To summarize, we now have two constructions for the virtual class of $\fZ$. On one hand, the morphism $\pi_{\fZ/\fB}:\fZ\to\fB$ given by fixing the bundle $N:=i^*E$ on $Z$ gives a perfect obstruction theory $\phi_{\fZ/\fB}:\bE_{\fZ/\fB}\to\bL_{\fZ/\fB}$ where
\begin{equation}\label{potZ}
(\bE_{\fZ/\fB})^{\vee}=R^{\bullet}{\pi_{\fZ}}_*ev_{\fZ}^*(A_{N})
\end{equation}
and $\phi_{\fZ/\fB}$ is determined by the universal evaluation morphism. 
For $A_{N}$ the Atiyah extension
\[
0\to\End(N)\to A_{N}\to T_Z\to 0.
\]
\begin{remark}\label{remZ}
The virtual class $[\fZ/\fB]^{\vir}$ agrees with the usual virtual class $[\fZ/\fM]^{\vir}$. The details are the same as in \cref{potX} and \cref{potXcompare}.
\end{remark}
On the other hand, realizing $\fZ$ as the critical locus of the cosection $\sigma_{\fX^E/\fB}$, gives a cosection localized vitual class $[\fX^E/\fB]_{\sigma_{\fX^E/\fB}}^{\vir}\in A_*(\fZ)$ by \cref{thm:coslocvir}.

 We will use degeneration to the normal cone to show that the two classes agree up to a sign.
\begin{example}[Convex case]
Fix $ g=0$ and $(n,\beta)$, $X$ and $E$ such that $H^1(C,f^*E)=0$ for any stable map $f:C\to X$. This is the case, for example, for $X=\bbP^m$, $E=\ccO(k)$ with $k>0$. Then all the $p$-fields are trivial, and the moduli space $\fX^E$ is just $\fX$. However, the perfect obstruction theories $\bbE_{\fX^E/\fB}$ and $\bbE_{\fX/\fB}$ of these two stacks are different. Together with 
\[
\bbE_{\fX^E/\fX}=\left(R^{\bullet}\pi_{\fX^E *}ev_{\fX^E}^*E^{\vee}\otimes\omega_{\pi_{\fX^E}}\right)^{\vee}
\]
they form a compatible triple in the sense of \cite[Definition 4.5]{VPB}.
Since $R^0\pi_{\fX^E *} ev_{\fX^E}^*E^{\vee}\otimes\omega_{\pi_{\fX}}=0$, we just have a locally-free obstruction sheaf given by $R^1\pi_{\fX^E *} ev_{\fX^E}^*E^{\vee}\otimes\omega_{\pi_{\fX}}$. The virtual fundamental class of $\fX^E$ given by this relative perfect obstruction theory is just 
\begin{align*}
[\fX^E/\fX]^{\vir}&=[\fX/\fB]^{\vir}\cap e(R^1\pi_{\fX^E *} ev_{\fX^E}^*E^{\vee}\otimes\omega_{\pi_{\fX}})\\
&=[\fX/\fB]^{\vir}\cap (-1)^{r(1-g)+\int_{\beta}c_1(E)}e(R^0\pi_{\fX^E *} ev_{\fX^E}^*E) 
\end{align*}
 
The cosection $\sigma_{\fX^E/\fB}$ is compatible with a relative cosection $\sigma_{\fX^E/\fX}$ now given locally by 
\begin{align*}
&H^1(C,f^*E^{\vee}\otimes\omega_C)\to\bbC\\
&\dot{p}\mapsto\langle f^*s,\dot{p} \rangle . 
\end{align*}
Cosection-localization gives a virtual class $[\fX^E]^{\vir}_{\sigma}\in A_*(\fZ)$ and by \cref{thm:coslocvir} we have
\[
\iota_*[\fX^E]^{\vir}_{\sigma}=[\fX^E/\fX]^{\vir}.
\]
In this case, the comparison theorem we aim to prove between $[\fX^E]^{\vir}_{\sigma}$ and $[\fZ]^{\vir}$ is equivalent to the Quantum Lefschetz hyperplane principle of \cite{KKP}, stating that
\[
[\fZ]^{\vir}=[\fX]^{\vir}\cap e(R^0\pi_{\fX} ev_{\fX}^*E).
\]
\end{example}

\section{Degeneration to the normal cone}\label{sec:degeneration}
 Let \[
V=Bl_{Z\times \{0\}}(X\times\bA^1)\setminus\widetilde{X\times\{0\}}.
\]
Let $\pi_{V/\bA^1}$ and $\pi_{V/X}$ denote the projections to $\bA^1$ and to $X$ respectively. Away from the zero fiber, $\pi_{V/X}$ is the projection to the first component, whereas 

\[
{\pi_{V/X}|}_{\pi_{V/\bA^1}^{-1}(0)}: \pi_{V/\bA^1}^{-1}(0)=Vb(N_{Z|X})\to X
\] 
is the projection $\Vb(N)\to Z$ followed by $Z\xrightarrow i X$.
There is a closed embedding $j:V\xhookrightarrow{} \Vb(E)\times\bA^1$ given by $V=Z(\tilde{s})$ for
\begin{equation}\label{tildes}
\tilde{s}=\pi^*s-yt\, ,\;\;\; \tilde{s}\in H^0\left(\Vb(E)\times\bA^1, \pi_{\Vb(E)\times\bA^1/X}^* E\right).
\end{equation}
Here $t$ is the coordinate on $\bA^1$ and $y$ is the section of $\pi^*_{\Vb(E)/X}E$ induced by the identity of $E$.
We can fix a vector bundle $\widetilde{E}:= \pi_{V/X}^*E$ on $V$ and the section $y$, which is $y=t^{-1}s$ away from the zero fiber. Over $t\neq 0$, the triple $(V,\tE,y)$ restricts to $(X,E,t^{-1}s)$ and over $0$ this gives the triple $(\Vb(N),\pi_{\Vb(N)/Z}^*N,1)$ where 1 is the section induced by the identity.

\subsection{The triple $(V,\tE,y)$}
The moduli of sections $\Gamma(\fC_{\fM}\times V/\fC_{\fM})$ is the moduli space of genus $g$, $n$-pointed prestable maps to the smooth variety $V$, denoted $\fV_{g,n}$. Define the degree of a map $f: C\to V$ as the degree of the composite map $\pi_{V/X}\circ f$. Let $\fV$ denote the stack of stable maps to $V$ of degree $\beta$. 
Since the domain curves are projective, stable maps to $V$ factor through the fibers of $\pi_{V/\bA^1}$. We have $\pi_{\fV/\bA^1}:\fV\to \bA^1$.
Similar to \cref{qXSBP}, we have a diagram
\begin{equation}\label{VPB}
\xymatrix{
\fV^{\tE}:=\Spec(\Sym (R^1ev_{\fV}^*\tE) )\ar[r] \ar[d] & \fP\times\bA^1\ar[d]\ar[r]&\fP\ar[d] \\
\fV  \ar[r] \ar[dr] & \fS\times\bA^1  \ar[d]\ar[r] & \fS\ar[d]\\
& \fB\times\bA^1\ar[d] \ar[r] & \fB\ar[d]\\
& \fM\times\bA^1\ar[r] & \fM
}
\end{equation}
Working relative to the last column of diagram (\ref{VPB}), the triple $(V,\tE,y)$  allows to define a (dual) perfect obstruction theory for $\fV$ given by 
\[
\bE_{\fV/\fB}^\vee=R^\bullet\pi_{\fV *}ev_{\fV}^*A_{\tE}.
\]
We have a (dual) perfect obstruction theory for $\fV_{\tE}\to\fB$ given by 
\[
\bE_{\fV^{\tE}/\fB}^\vee=\pi_{\fV^{\tE}/\fV}^*\bE_{\fV/\fB}^\vee\oplus R^{\bullet}\pi_{\fV^{\tE}*}\left(\ev_{\fV^{\tE}}^*\tE^\vee\otimes\omega_{\pi_{\fV^{\tE}}}\right)
\]
and a cosection $\sigma_{\fV^{\tE}/\fB}$ induced by $\delta y:A_{\tE}\to\tE$.

As in \cref{ss: cosection}, we can use the section $y$ to induce a morphism \[
\mathfrak{y}:\fC_{\fB}\times_{BGL_r}V\to\Vb(\cE_{\fB}).\]
 Together with dual pairing on the fibers, $\mathfrak{y}$ induces a morphism 
 \[
 h_{\fV^{\tE}/\fB}:\fC_{\fB}\times_{BGL_r}V\times_{\fC_{\fB}}\Vb(\cE_{\fB}^\vee\otimes\omega_{\pi_{\fB}})\to\ffH
 \]
  from which the cosection derives. This construction guarantees at once that the cosection lifts, as observed in \cref{ss:lift}.
  
\begin{lemma}\label{deglocy}
The degeneracy locus of the cosection is
\[D(\sigma_{\fV^{\tE}/\fB})=\fZ\times\bA^1\subset\fV^{\tE}.
\] 
\end{lemma}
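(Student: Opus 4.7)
The plan is to adapt the argument of \Cref{prop: degeneracy} to the new triple $(V,\tE,y)$. Two geometric facts about this triple need to be put in place first: (i) the zero locus of $y$ in $V$ is $Z\times\bA^1$, and (ii) $y$ is a regular section of $\tE$. For (i), I would work in the ambient $\Vb(E)\times\bA^1$ in which $V=Z(\tilde s)$ for $\tilde s=\pi^*s-yt$. The tautological section $y$ of $\pi^*_{\Vb(E)/X}E$ vanishes along the zero section $X\times\bA^1$; intersecting with $\tilde s=0$ forces $s(x)=0$, so $Z(y)\cap V=Z\times\bA^1$. For (ii), I would check regularity fiberwise over $\bA^1$: for $t\neq 0$ the identification $V_t\cong X$ sends $y$ to $t^{-1}s$, which is regular since $s$ is; on the central fiber $V_0\cong\Vb(N)$, $y|_{V_0}$ is the tautological section of $\pi^*_{\Vb(N)/Z}N$, cutting out the zero section $Z\subset\Vb(N)$, the prototype regular section of a rank $r$ bundle. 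Hence $\delta y\colon A_{\tE}\to\tE$ is surjective.

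With (i) and (ii) available, the remainder is a verbatim repeat of \Cref{prop: degeneracy} applied to $(V,\tE,y)$. The construction of $\sigma_{\fV^{\tE}/\fB}$ from the morphism $h_{\fV^{\tE}/\fB}$ yields, at a closed point $\xi=[(C,\underline x),f,p]\in\fV^{\tE}$, the local formula
\[
\sigma_{\fV^{\tE}/\fB}|_\xi(\dot z,\dot p)=\langle f^*\delta y(\dot z),p\rangle+\langle f^*y,\dot p\rangle.
\]
Vanishing of this pairing for all $(\dot z,\dot p)$ forces $f^*y=0$ by varying $\dot p$, so $f$ factors through $Z\times\bA^1\subset V$; then surjectivity of $\delta y$ implies surjectivity of $H^1(f^*\delta y)$, and Serre duality forces $p=0$. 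The converse is immediate.

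To conclude, since $C$ is connected and projective, any $f\colon C\to V$ factoring through $Z\times\bA^1\subset V$ has constant $\bA^1$-component. Stable maps to $Z\times\bA^1$ of $\pi_{V/X}$-degree $\beta$ therefore correspond bijectively to pairs of a stable map to $Z$ of some class $\beta'$ with $i_*\beta'=\beta$ together with a point $t\in\bA^1$, identifying the degeneracy locus with $\fZ\times\bA^1$ as desired. I expect the main (and essentially only) new content beyond \Cref{prop: degeneracy} to be the fiberwise regularity check on the central fiber $V_0$; the rest is formal once one notes that $h_{\fV^{\tE}/\fB}$ was set up as a direct analogue of $h_{\fX^E/\fB}$ so that the local formula for the cosection transfers without change.
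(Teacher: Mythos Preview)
Your proposal is correct and follows essentially the same approach as the paper: both arguments verify that $y$ is a regular section of $\tE$ with vanishing locus $Z\times\bA^1$ by checking fiberwise over $\bA^1$ (the paper phrases this as regularity of the embedding $Z\subset V_t$ together with $V_t\subset V$ being Cartier), then invoke the argument of \Cref{prop: degeneracy} verbatim, and finally identify the moduli of stable maps to $Z\times\bA^1$ with $\fZ\times\bA^1$ using that projective curves map to points in $\bA^1$. The only cosmetic difference is that you spell out the local formula and the Serre-duality step explicitly, whereas the paper simply refers back to \Cref{prop: degeneracy}.
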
 
\begin{proof}
Then the situation is analogous to \cref{prop: degeneracy}, because $y$ is a regular section of $\tE$ over $V$ with vanishing locus $\fZ\times\bA^1$. Indeed, $\{y=0\}\subset V$ is $\{\pi^*s-ty=0, y=0\}\subset \Vb(E)\times\bA^1$ which is $Z\times\bA^1$. To check that the section $y$ is regular, we check that the inclusion of the vanishing locus $Z\times\bA^1\subset V$ is a regular embedding. Fibers $V_t$ are effective Cartier divisors, so if $Z\subset V_t$ is regular for each $t$, then $Z\times\bA^1\subset V$ is regular. But on fibers the embedding is given by the vanishing of a regular section of a vector bundle, so it is regular. 

Then by the analog of \cref{prop: degeneracy}  we have that $D(\sigma_{\fV^{\tE}/\fB})$ is the moduli of stable maps to $Z\times\bA^1$, which we can identify with $\fZ\times\bA^1$ since the image of a projective curve on $\bA^1$ needs to be constant.
\end{proof}

This is a good place to point out that the formation of the Atiyah extension does not commute with pullback, in particular the restriction of $A_{\tE}$ to $V|_t\cong X$ is not isomorphic to the bundle $A_E$ since $\mbox{rk}(A_{\tE})=\mbox{rk}(A_E)+1$. 
We need to consider the moduli spaces $\fV$ and $\fV^{\tE}$ relative to the penultimate column of (\ref{VPB}). To construct a perfect obstruction theory for $\fV\to\fB\times\bA^1$, consider the morphism
\[
V\to BGL_r\times\bA^1
\]
induced by $\pi_{V/\bA^1}$ and the bundle $\tE$. As we check below, this morphism is smooth and induces a  relative Atiyah extension $A$ and a short exact sequence coming from the distinguished triangle $V\to  BGL_r\times\bA^1\to BGL_r$
\begin{equation}\label{SESA}
0\to A\to A_{\tE}\to\cO_V\to 0.
\end{equation}

\begin{lemma}\label{VrelA}
The morphism $f:=(\psi_{\tE},\pi_{V/\bA^1}):V\to BGL_r\times\bA^1$ is smooth. It has a locally free tangent sheaf $A$ on $V$ or rank $r^2+n$. Over $t\neq 0$, we can identify $V_t\cong X$ and $A$ restricts to the sheaf $A_E$. Over $V_0\cong \Vb(N)$, $A$ restricts to $\pi_{\Vb(N)/Z}^*(A_N\oplus N)$. 
\end{lemma}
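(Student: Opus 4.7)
The plan is to establish smoothness of $f$ by checking fibers and smooth covers, then compute the restriction of the relative tangent sheaf fiberwise using base change.

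First I would verify that $V$ itself is smooth: since $s$ is a regular section, $Z\hookrightarrow X$ is a regular embedding, so $Z\times\{0\}\subset X\times\bA^1$ is a smoothly embedded subvariety, the blowup is smooth, and $V$ is an open subscheme thereof. Next I would show that $\pi_{V/\bA^1}:V\to\bA^1$ is smooth. Its two scheme-theoretic fibers $X$ and $\Vb(N)$ are smooth of the same dimension $\dim X$; hence by miracle flatness (applied to a map from a smooth, thus Cohen–Macaulay, variety to a smooth curve with constant fiber dimension) it is flat, and flatness together with smooth fibers gives smoothness. To conclude that $f=(\psi_{\tE},\pi_{V/\bA^1})$ is smooth, I would pull back along the smooth surjective cover $\bullet\times\bA^1\to BGL_r\times\bA^1$. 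The base change is $\cP_{\tE}\to\bA^1$, which factors as the $GL_r$-torsor $\cP_{\tE}\to V$ composed with the smooth $\pi_{V/\bA^1}$; composition of smooth morphisms is smooth, so by smooth descent $f$ is smooth. Then $A=\bT_{V/BGL_r\times\bA^1}$ is locally free, and its rank equals the relative dimension $(\dim X+1)-(1-r^2)=\dim X+r^2$ (the ``$n$'' in the statement should read $\dim X$).

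Since the formation of the relative tangent sheaf for a smooth morphism commutes with base change, for each $t\in\bA^1$ one has $A|_{V_t}\cong \bT_{V_t/BGL_r}$, where the classifying map $V_t\to BGL_r$ corresponds to $\tE|_{V_t}$. Over $t\neq 0$, $V_t\cong X$ and $\tE|_{V_t}\cong E$, so by definition $A|_{V_t}=\bT_{X/BGL_r}=A_E$.

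For the special fiber, I would use that $\pi_{V/X}|_{V_0}$ equals $i\circ\pi$, so $\tE|_{V_0}=\pi^*N$ is pulled back from $Z$, and the classifying map factors as $\Vb(N)\xrightarrow{\pi}Z\xrightarrow{\psi_N}BGL_r$. Applying the transitivity triangle to this factorization (and using that $\pi$ is smooth with $\bT_{\Vb(N)/Z}=\pi^*N$) produces the short exact sequence
\[
0\to \pi^*N\to A|_{V_0}\to \pi^*A_N\to 0.
\]
I would then argue this sequence splits: the bundle projection $\pi:\Vb(N)\to Z$ has a zero section $s_0$ which realizes $Z$ as the $\bC^*$-fixed locus of the scaling action on fibers, giving a canonical $\bC^*$-equivariant splitting of the Atiyah-type extension pulled back from $Z$. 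Together with $\bT_{\Vb(N)/Z}=\pi^*N$ this yields $A|_{V_0}\cong \pi^*(A_N\oplus N)$. The main obstacle is precisely this last splitting, since Atiyah extensions on total spaces of vector bundles do not split in general; the argument must use the fact that $\tE|_{V_0}$ is a pullback from $Z$ together with the linear structure on $\Vb(N)$.
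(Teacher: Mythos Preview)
Your smoothness argument is correct and close to the paper's: both pass to the frame bundle $P_{\tE}\to\bA^1$ after pulling back along $\bA^1\to BGL_r\times\bA^1$. The paper checks smoothness by writing $P_{\tE}$ explicitly as the zero locus of $\phi_{\tilde s}(e,v,t)=\phi_s(e)+tv$ inside $P_E\times\bC^r\times\bA^1$ and reading off the fibers (graph of $\phi_s$ for $t\neq 0$, and $\phi_s^{-1}(0)\times\bC^r\cong P_N\times\bC^r$ for $t=0$), whereas you first prove $V\to\bA^1$ smooth via miracle flatness and then compose with the torsor map. Both are fine, and your remark that the ``$n$'' in the rank should be $\dim X$ is correct.

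The genuine gap is in your identification of $A|_{V_0}$. Your transitivity triangle for $\Vb(N)\to Z\to BGL_r$ correctly produces
\[
0\to \pi^*N\to A|_{V_0}\to \pi^*A_N\to 0,
\]
but the $\bC^*$-equivariance argument you sketch does not force this to split. The equivariant extension class lives in the weight-zero piece of $H^1\bigl(\Vb(N),\pi^*(A_N^\vee\otimes N)\bigr)$, and after pushing down via $\pi_*\cO_{\Vb(N)}=\Sym^\bullet N^\vee$ the weight-zero piece is $H^1(Z,A_N^\vee\otimes N\otimes N^\vee)$, which has no reason to vanish. So weight considerations alone are not enough, and you yourself flag this as the main obstacle.

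The paper sidesteps the splitting problem entirely by using exactly the explicit fiber computation above: over $t=0$ the frame bundle of $\pi^*N$ on $\Vb(N)$ is the \emph{product} $P_N\times\bC^r$, with $GL_r$ acting diagonally. Hence $T_{P_N\times\bC^r}=\mathrm{pr}_1^*T_{P_N}\oplus\mathrm{pr}_2^*T_{\bC^r}$ already splits $GL_r$-equivariantly, and descending gives $A|_{V_0}\cong\pi^*A_N\oplus\pi^*N$ directly, with no extension class to analyze. If you want to keep your transitivity-triangle approach, the cleanest way to close the gap is to invoke this product description of the frame bundle to exhibit the splitting; without it your argument is incomplete.
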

\begin{proof}
To check smoothness consider the smooth atlas $\bA^1\to  BGL_r\times\bA^1$, we have the following Cartesian diagram
\[
\xymatrix{
P_{\tE}\ar[r]^{\tilde{f}}\ar[d] & \bA^1\ar[d]\\
V\ar[r]^{f} &  BGL_r\times\bA^1
}
\]
where  $P_{\tE}$ is the $GL_r$-torsor on $V$ associated to the bundle $\tE$. We check smoothness by giving an explicit expression for the top morphism $\tilde{f}:P_{\tE}\to\bA^1$. To this end, we describe below an embedding of $P_{\tE}$ into the torsor corresponding to the vector bundle $\pi_{\Vb(E)\times\bA^1/X}^* E$ over $\Vb(E)\times\bA^1$. The composition of this embedding with the projection to the $\bA^1$ factor coincides with the map $\tilde{f}$. 

Since $E$ trivializes when pulled back by $P_E\to X$, we have the following cartesian diagram of $GL_r$-torsors
\begin{equation*}
\xymatrix{
P_{\tE}\ar[r] \ar[d]& P_E\times\bC^r\times\bA^1\ar[r] \ar[d]& P_E\ar[d]\\
V\ar[r]^{j\qquad} & \Vb(E)\times\bA^1\ar[r] & X.
}
\end{equation*}
The right $GL_r$-action on $P_E\times\bC^r\times\bA^1$ is given by
\[
(e,v,t)g=(eg,\rho_0(g^{-1})v,t).
\]
The section $\tilde{s}$ of \cref{tildes} can be lifted to a $GL_r$-equivariant map
\begin{align}\label{PCA}
\phi_{\tilde{s}}:&P_E\times\bC^r\times\bA^1\lra {\bC}^r\nonumber\\
&(e,v,t)\mapsto \phi_{s}(e)+tv
\end{align}
where $GL_r$ acts on the right on $\bC^r$ by $vg=\rho_0(g^{-1})v$.
The torsor $P_{\tE}$ is vanishing locus of this regular function $P_{\tE}\cong \phi_{\tilde{s}}^{-1}(0)$. To show $\tilde{f}$ is smooth by \cite[Theorem 10.2]{Ha} it suffices to show that the geometric fibers are regular and equidimensional of dimension $\dim{P_{\tE}}-1=r^2+n$. By \cref{PCA} we see that the fibers over $t\neq 0$ consist of the graph of $\phi_s$, so are isomorphic to $P_E$ and the fiber over zero to $\phi_s^{-1}(0)\times\bC^r\cong P_N\times \bC^r$. So all geometric fibers are smooth of the correct dimension. 
We can then define $A=[T_{P_{\tE}/\bA^1}/GL_r]\cong T_{V/BGL_r\times\bA^1}$. Let $i_t:t\xhookrightarrow{} \bA^1$ be the inclusion of a point, the restriction of $f$ to the fiber over $t$ is the morphism 
\[
\psi_{\tE|_{V_t}}:V_t\to BGL_r
\]
induced by the restriction of $\tE$. So $i_t^*A\cong T_{V_t/BGL_r}$, which is the bundle defined by the Atiyah extension of $i_t^*\tE$. For $t\neq 0$, after identifying $V_t\cong X$ we have $i_t^*\tE\cong E$ and $i_t^*A\cong A_E$. Over $t=0$, $V_0\cong\Vb(N)$ and $i_0^* \tE\cong \pi_{\Vb(N)/Z}^*N$. The corresponding $GL_r$-torsor is $P_N\times\bC^r\to\Vb(N)$. Then $i_0^*A\cong\pi_{\Vb(N)/Z}^*( A_N\oplus N)$. 
\end{proof}
Let $\fC_{\fB\times\bA^1}:=\fC_{\fB}\times\bA^1$, we can identify $\fV$ with an open substack of the moduli of sections of $V\times_{BGL_r\times \bA^1}\fC_{\fB\times\bA^1}$ over $\fC_{\fB\times\bA^1}$, then by the usual moduli of sections construction the bundle $A$ induces a (dual) perfect obstruction theory 
\[
\bE^\vee_{\fV/\fB\times\bA^1}=R^\bullet\pi_{\fV *} ev_{\fV}^* A.
\]
Similarly, for $\fV^{\tilde{E}}$, which is (an open substack of) the moduli of sections of $(V\times_{BGL_r\times \bA^1}\fC_{\fB\times\bA^1})\times_{\fC_{\fB\times\bA^1}}\Vb(\fE_{\fB\times\bA^1}^{\vee}\otimes\omega_{\pi_{\fB\times\bA^1}})$ we obtain
\[
\bE^\vee_{\fV^{\tE}/\fB\times\bA^1}=R^\bullet\pi_{\fV^{\tE} *} \left(ev_{\fV^{\tE}}^* A\oplus ev_{\fV^{\tE}}^*\tE^{\vee}\otimes\omega_{\pi_{\fV^{\tE}}}\right).
\]
 There is a cosection $\sigma_{\fV^{\tE}/\fB\times\bA^1}$ induced by $A\to A_{\tE}\xrightarrow{\delta y} \tE$ together with the usual dual pairing. At the level of moduli of sections, it is induced by a map to $\Vb(\omega_{\pi_{\fB\times\bA^1}})$, which guarantees that it lifts by the argument of \cref{ss:lift}.
 The degereracy locus is the same as that of $\sigma_{\fV^{\tE}/\fB}$, because the above composition is surjective.
 We have a cosection localized virtual fundamental class
 \[
[\fV^{\tE}/\fB\times\bA^1]^{\vir}_{\sigma_{\fV^{\tE}/\fB\times\bA^1}}\in A_*(\fZ\times\bA^1).
 \]
 However, $\bE_{\fV/\fB\times\bA^1}$ and $\bE_{\fV/\fB}$ are not compatible perfect obstruction theories in the sense of  \cref{a1inv} , and do not give the same absolute obstruction sheaf. This is because its obstruction bundle $\Ob_{\fV^{\tilde{E}}/\fB}$ has an extra factor of $R^1\pi_{\fV^{\tilde{E} }*}\ccO_{\fC_{\fV^{\tilde{E}}}}$, as we can see from \cref{SESA}. To remedy this, we need to introduce a different obstruction theory $\bE'_{\fV^{\tilde{E}}/\fB}$. 
We can argue as in \cite[Theorem 4.6]{chang2020invariants}, by defining $\bE'^{\vee}_{\fV^{\tilde{E}}/\fB}$ as the mapping cone of $f:\ccO_{\fV^{\tilde{E}}}[-1]\to\bE^{\vee}_{\fV^{\tE}/\fB\times\bA^1}$. We have 
 \[\xymatrix{
 \ccO_{\fV^{\tilde{E}}}[-1]\ar[r]^{f}& \bE^{\vee}_{\fV^{\tilde{E}}/\bA^1\times \fB} \ar[r] & \bE'^{\vee}_{\fV^{\tilde{E}}/\fB}\ar[r]^{+1} & \\
\ccO_{\fV^{\tilde{E}}}[-1]\ar[r]\ar[u]^{\cong}& \bT_{\fV^{\tilde{E}}\bA^1\times \fB} \ar[u]^{\phi^{\vee}_{\fV^{\tilde{E}}\bA^1\times \fB}}\ar[r] &\bT_{\fV^{\tilde{E}}/\fB}\ar[u]^{\phi^{\vee}_{\fV^{\tilde{E}}/\fB}}\ar[r]^{+1} &
 }
 \]
where $\phi^{\vee}_{\fV^{\tilde{E}}\bA^1\times \fB}$ is induced by the mapping cone axiom and $\bE'^{\vee}_{\fV^{\tilde{E}}/\fB}$ defines a (dual) perfect obstruction theory as we can easily check from the long exact sequences coming from the rows of the above diagram. The cosection $\sigma_{\fV^{\tilde{E}}/\fB}$ costructed above lifts to the absolute obstruction sheaf, so it induces a cosection $\sigma'_{\fV^{\tilde{E}}/\fB}$ of $\Ob'_{\fV^{\tilde{E}}/\fB}=h^1(\bE'^{\vee}_{\fV^{\tilde{E}}/\fB})$. 
This gives us the required perfect obstruction theories on $\fV^{\tilde{E}}$ to apply \cref{deformationthm}.
 
\subsection{The fiber over $0$}\label{ss:zerofiber} The triple  $(\Vb(N),\pi_{\Vb(N)/Z}^*N,1)$ gives rise to a moduli space $\fN^{N}$ of stable maps to $\Vb(N)$ with p-fields together with a dual relative perfect obstruction theory
\begin{equation}\label{potN}
\bE^\vee_{\fN^N/\fB}=R^\bullet\pi_{\fN^N/\fB *}ev_{\fN^N}^*\pi_{\Vb(N)/Z}^*( A_N\oplus N)\oplus ev_{\fN^N}^*\pi_{\Vb(N)/Z}^*N^\vee\otimes\omega_{\pi_{\fN^N}}
\end{equation}
and a cosection $\sigma_{\fN^N/\fB}$ induced by $\delta (1)$. By \cref{VrelA}, and base-change, we see that $\bE_{\fN^N/\fB}=\iota_0^*\bE_{\fV^{\tilde{E}}/\fB\times\bA^1}$. We note here that the degree of a map to $\Vb(N)$ was defined to be $\pi_{\Vb(N)/Z*}i_*[C]\in H_2(X;\bbZ)$ so we do obtain that $D(\sigma_{\fN^N/\fB})$ is the stack of stable maps to $Z$ of degrees $\beta'$ such that $i_*\beta'=\beta$.

To describe this cosection explicitly, recall that the $GL_r$-torsor associated to \[\pi_{\Vb(N)/Z}^*N\to\Vb(N)\] is $P_N\times\bC^r$ and the tautological section $1$ of the vector bundle induces the  $GL_r$-equivariant map $P_N\times\bC^r\to\bC^r$ given by projection onto the second coordinate. Then $\delta (1)$ is the projection $ A_N\oplus N\to N$. The section $1$ of $\pi_{\Vb(N)/Z}^*N\to\Vb(N)$ is regular and has vanishing locus $Z$  regularly embedded in $\Vb(N)$ by the zero section. So by the argument in \cref{prop: degeneracy} have that the degeneracy locus of the cosection is $\fZ$. We obtain the following: 
 
 \begin{proposition}
The triple $(\Vb(N),\pi_{\Vb(N)/Z}^*N,1)$ defines the moduli space of stable maps to $\Vb(N)$ with p-fields, with a perfect obstruction theory and a cosection localized virtual class
\[
[\fN^N/\fB]_{\sigma_{\fN^N/\fB}}^\vir\in A_*(\fZ).
\]
\end{proposition}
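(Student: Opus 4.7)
The strategy is to verify that the triple $(\Vb(N),\pi_{\Vb(N)/Z}^*N,1)$ satisfies all the hypotheses needed to fall inside the general framework developed in \cref{sect: quantum}, so that the entire construction of the moduli space, the perfect obstruction theory, the relative cosection, and the cosection-localized virtual class goes through verbatim. First I would observe that $\Vb(N)$ is a smooth quasi-projective variety (as the total space of a vector bundle on the smooth variety $Z$), the pullback $\pi_{\Vb(N)/Z}^*N$ is a rank $r$ vector bundle on $\Vb(N)$, and the tautological section $1$ is a global section whose vanishing locus is $Z\subset\Vb(N)$, included as the zero section. The reason the main theorems of \cref{sect: quantum} apply despite $\Vb(N)$ being only quasi-projective is that any stable map from a projective curve into $\Vb(N)$ must have image in a fiber of some compactification, and more directly because the degree condition $\pi_{\Vb(N)/Z*}i_*[C]=\beta$ together with properness of $Z$ forces the image to land in the zero section, cf.\ \cref{ss:zerofiber}.

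Next I would verify the perfect obstruction theory. Applying \cref{potX} to the pair $(\Vb(N),\pi_{\Vb(N)/Z}^*N)$ gives a perfect obstruction theory for the morphism $\fN\to\fB$, where $\fN$ denotes the stack of stable maps to $\Vb(N)$, with obstruction complex $R^\bullet\pi_{\fN*}ev_{\fN}^*A_{\pi_{\Vb(N)/Z}^*N}$. Adding the $p$-field factor exactly as in the construction of $\bE^\vee_{\fX^E/\fB}$ yields the complex $\bE^\vee_{\fN^N/\fB}$ written in \cref{potN}. The crucial computation is the identification $A_{\pi_{\Vb(N)/Z}^*N}\cong\pi_{\Vb(N)/Z}^*(A_N\oplus N)$, which follows because $\pi_{\Vb(N)/Z}$ is a smooth affine-bundle projection and the Atiyah extension pulls back to a direct sum with the vertical tangent bundle $\pi_{\Vb(N)/Z}^*N$; this is the case $t=0$ already checked in \cref{VrelA}.

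For the cosection, I would construct $\sigma_{\fN^N/\fB}$ exactly as in \cref{ss: cosection}, using the section $1$ in place of $s$ to define the morphism $\mathfrak{y}\colon\fC_{\fB}\times_{BGL_r}\Vb(N)\to\Vb(\cE_{\fB})$ and then combining with the dual pairing to obtain the map $h_{\fN^N/\fB}$ into the Hodge bundle. The argument of \cref{ss:lift}, which depends only on the formal fact that $\ffH\to\fB$ is smooth and representable, shows that $\sigma_{\fN^N/\fB}$ lifts to the absolute obstruction sheaf. Identifying $\delta(1)$ with the projection $A_N\oplus N\to N$ (from the explicit description of the torsor $P_N\times\bC^r$ and the projection $(e,v)\mapsto v$) makes this completely explicit.

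Finally I would determine the degeneracy locus and invoke Kiem--Li. The content here is that $1$ is a \emph{regular} section of $\pi_{\Vb(N)/Z}^*N$, which amounts to checking that the zero-section inclusion $Z\hookrightarrow\Vb(N)$ is a regular embedding of codimension $r$: this is standard for the zero section of a vector bundle on a smooth base. Granted regularity, the analog of \cref{prop: degeneracy} applies word-for-word and yields $D(\sigma_{\fN^N/\fB})=\fZ$. The cosection-localized virtual class $[\fN^N/\fB]^{\vir}_{\sigma_{\fN^N/\fB}}\in A_*(\fZ)$ is then produced by the construction of \cite{KL}, whose DM hypothesis is satisfied because $\fN^N\to\fB$ is representable and $\fN^N$ is itself a DM stack (being a cone over the DM stack of stable maps to $\Vb(N)$). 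The main point that required any verification beyond bookkeeping is the identification of $A_{\pi_{\Vb(N)/Z}^*N}$ in terms of data on $Z$; once this is in hand, every other step is a direct specialization of \cref{sect: quantum}.
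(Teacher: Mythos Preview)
Your proposal is correct and follows essentially the same approach as the paper: the proposition is stated in the paper immediately after a discussion that establishes exactly the ingredients you list (the perfect obstruction theory \eqref{potN}, the identification of $\delta(1)$ as the projection $A_N\oplus N\to N$, regularity of the tautological section, the lifting argument of \cref{ss:lift}, and the analog of \cref{prop: degeneracy} giving $D(\sigma_{\fN^N/\fB})=\fZ$), so the proposition is really a summary of that discussion rather than a separately proven statement.

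One small correction: your aside explaining why quasi-projectivity of $\Vb(N)$ is harmless contains a slip. The degree condition does \emph{not} force the image of a stable map to land in the zero section; a stable map $C\to\Vb(N)$ is the data of a stable map $g:C\to Z$ together with an arbitrary section $q\in H^0(C,g^*N)$, and $q$ need not vanish. The correct reason the framework of \cref{sect: quantum} applies is simply that $\fN^N$ is a DM stack (a cone over the proper DM stack $\fZ$, cf.\ \cref{prop:potN^N}) and the cosection-localized construction of \cite{KL} does not require properness of the ambient moduli space, only that the degeneracy locus be proper.
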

We can also view $\fN^N$ as a cone over $\fZ$. In fact composing a stable map to $\Vb(N)$ with the projection to $Z$ gives a morphism $\fN^N\to\fZ$. 
Let $\cE_{\fZ}=ev_{\fZ}^*N$, by cohomology and base-change we can rewrite the dual perfect obstruction theory of $\fN^N/\fB$ as
\begin{equation}
\bE_{\fN^N/\fB}^{\vee}=\pi^*_{\fN^N/\fZ}\left(\bE_{\fZ/\fB}^{\vee}\oplus R^{\bullet}\pi_{\fZ*}(\cE_{\fZ}\oplus\cE_{\fZ}^{\vee}\otimes\omega_{\fZ})\right).
\end{equation}
If we let $\cE_{\fN^N}=\pi_{\fN^N/\fZ}^*\cE_{\fZ}$, note that this is also isomorphic to $ev_{\fN^N}^*\pi_{\Vb(N)/Z}^*N$, the relative obstruction sheaf becomes 
\[
\Ob_{\fN^N/\fB}=\pi_{\fN^N/\fZ}^*\Ob_{\fZ/\fB}\oplus R^1\pi_{\fN^N *}\left( \cE_{\fN^N}\oplus\cE_{\fN^N}^{\vee}\otimes\omega_{\pi_{\fN^N}}\right)
\]
and the cosection $\sigma_{\fN^N/\fB}$ simply comes from the natural pairing of $\cE_{\fZ}\oplus\cE^{\vee}_{\fZ}\to\cO_{\fZ}$.

\begin{proposition}\label{prop:potN^N}
We can identify
\[
\fN^N=\Spec_{\fZ}\Sym(R^1(\cE_{\fZ})\oplus R^1(\cE_{\fZ}^{\vee}\otimes\omega_{\pi_{\fZ}})).
\]
That is, $\fN^N\cong \fZ\times_{\fB}\fP$ where the morphism $\fZ\to \fB$ is induced by the bundle $N$ over $Z$. This induces a (dual) perfect obstruction theory of virtual dimension 0
\[
\bE^\vee_{\fN^N/\fZ}=\pi^*_{\fN^N/\fZ}R^\bullet\pi_{\fZ *}\cE_{\fZ}\oplus \cE_{\fZ}^\vee\otimes\omega_{\pi_{\fZ}}
\]
which is compatible with $\bE^\vee_{\fN^N/\fB}$ from \cref{potN} and $\bE^\vee_{\fZ/\fB}$ from \cref{potZ}.
\end{proposition}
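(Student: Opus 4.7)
The plan is to identify $\fN^N$ modularly with the fiber product $\fZ \times_{\fB} \fP$, transport the Spec/Sym presentation and the relative perfect obstruction theory of $\pi_{\fP/\fB}$ along this identification, and finally check compatibility by a direct decomposition of $\bE^\vee_{\fN^N/\fB}$.

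First I would unpack $\fN^N(\bullet)$: it consists of tuples $((C,\underline{x}),f,p)$ with $f\colon C \to \Vb(N)$ a Kontsevich stable map and $p \in H^0(C, f^*\pi_{\Vb(N)/Z}^* N^\vee \otimes \omega_C)$. Because $\pi_{\Vb(N)/Z}$ is an affine morphism and $C$ is proper, the datum of $f$ is equivalent to a pair $(g,q)$ with $g = \pi_{\Vb(N)/Z}\circ f\colon C\to Z$ and $q \in H^0(C, g^*N)$: any component on which $g$ is constant maps under $f$ into a single affine fiber $N_z$ and hence is also constant under $f$, so Kontsevich stability of $f$ matches that of $g$. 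Under this identification $p$ becomes a section of $g^*N^\vee \otimes \omega_C$, so comparing with \cref{ss: SandP} one obtains an equivalence with $\fZ \times_{\fB} \fP$, where $\fZ \to \fB$ is the morphism classifying $N$.

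Once $\fN^N \cong \fZ \times_{\fB} \fP$ is in place, both remaining assertions follow by base change. The Spec/Sym presentation
\[
\fP = \Spec_{\fB}\Sym\bigl(R^1\pi_{\fB*}(\cE_\fB \oplus \cE_\fB^\vee \otimes \omega_{\pi_\fB})\bigr)
\]
pulls back under $\fZ \to \fB$ to the stated description of $\fN^N$, using $\cE_\fZ = ev_\fZ^*N$ and cohomology and base change along the square $\fC_\fZ \to \fC_\fB$ over $\fZ \to \fB$. The same base change turns $\bE_{\fP/\fB}^\vee = \pi_{\fP/\fB}^*R^\bullet\pi_{\fB*}(\cE_\fB \oplus \cE_\fB^\vee \otimes \omega_{\pi_\fB})$ into the relative perfect obstruction theory $\bE_{\fN^N/\fZ}^\vee = \pi_{\fN^N/\fZ}^*R^\bullet\pi_{\fZ*}(\cE_\fZ \oplus \cE_\fZ^\vee \otimes \omega_{\pi_\fZ})$. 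Virtual dimension zero is immediate from Serre duality, since $\chi(V) + \chi(V^\vee \otimes \omega_C) = 0$ for any vector bundle $V$ on a smooth projective curve $C$.

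For compatibility with \cref{potN} and \cref{potZ}, I would use $\pi_{\Vb(N)/Z} \circ ev_{\fN^N} = ev_\fZ \circ \overline{\pi_{\fN^N/\fZ}}$ together with $\omega_{\pi_{\fN^N}} = \overline{\pi_{\fN^N/\fZ}}^*\omega_{\pi_\fZ}$ to rewrite the integrands of $\bE^\vee_{\fN^N/\fB}$ from \cref{potN} as pullbacks from $\fC_\fZ$. Pushing forward and applying cohomology and base change splits $\bE_{\fN^N/\fB}^\vee$ canonically as $\pi_{\fN^N/\fZ}^* \bE_{\fZ/\fB}^\vee \oplus \bE_{\fN^N/\fZ}^\vee$, giving the distinguished triangle of obstruction theories compatible with the tower $\fN^N \to \fZ \to \fB$. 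The main substantive point is the stability comparison in step one; everything after it is routine base change.
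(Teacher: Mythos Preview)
Your proposal is correct and follows essentially the same route as the paper: both identify a stable map $f\colon C\to \Vb(N)$ with a pair $(g,q)$ where $g=\pi_{\Vb(N)/Z}\circ f$ and $q\in H^0(C,g^*N)$, then obtain the Spec/Sym description and the perfect obstruction theory by cohomology and base change from $\fP\to\fB$, and deduce compatibility from the evident splitting of $\bE^\vee_{\fN^N/\fB}$. You are in fact a bit more careful than the paper in two places---you check explicitly that Kontsevich stability of $f$ and $g$ agree (the paper passes over this), and you spell out the Serre-duality reason for virtual dimension zero---but these are refinements of the same argument rather than a different approach.
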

\begin{proof}
The first claim follows by observing that the moduli of stable maps to $\Vb(N)$ is the cone $\Spec_{\fZ}\Sym(R^1(ev_{\fZ}^*N))$ over $\fZ$. In fact, a map $f:C\to\Vb(N)$ is given by the composite map $g:C\to\Vb(N)\to Z$ together with a map of $\cO_Z$-algebras $\Sym^*(N^\vee)\to f_*\cO_C$. This is determined by the image of degree 1 generators. Hence, by adjunction, corresponds to a map of $\cO_C$-modules $g^*N^\vee\to\cO_C$, which gives some $q\in H^0(C,g^*N)$. In the same way, $(g,q)$ determine $f:C\to\Vb(N)$ wich commutes with $g$ and $\pi_{\Vb(N)/Z}$. 
Given a triple $((C,x_1,\dots ,x_n),f,p)$ in $\fN^N(\bullet)$, where $p\in H^0(C,f^*\pi^*_{\Vb(N)/Z}N^\vee\otimes\omega_C)$, we can construct uniquely a quadruple $((C,x_1,\dots ,x_n),g, q,p)$ where $(g,q)$ are as described above and $p\in H^0(C,g^*N^\vee\otimes\omega_C)$ since $g=\pi_{\Vb(N)/Z}\circ f$. Conversely, $(g,q)$ determine a morphism $f: C\to \Vb(N)$ over $g:C\to Z$.

Then the claim that $\bE_{\fN^N/\fZ}^\vee$ is a perfect obstruction theory follows from the result on perfect obstruction theories of moduli of sections \cite[Proposition 2.5]{CL} and cohomology and base change, see for example \cite[Theorem 5.2]{VPB}. 
Similarly, the claim about compatibility follows from pulling back the split exact sequence
\[
0\to ev_{\fZ}^*N\oplus ev_{\fZ}^*N^\vee\otimes \omega_{\pi_{\fZ}}\to ev_{\fZ}^*N\oplus ev_{\fZ}^*N^\vee\otimes\omega_{ \pi_{\fZ}}\oplus ev_{\fZ}^*A_N\to ev_{\fZ}^*A_N\to 0
\]
to $\fC_{\fN^N}$ and using the compatibility of evaluation morphisms. 
\end{proof}
 Using this, we can think of objects in $\fN^N(\bullet)$ as quadruples $((C,x_1,\dots ,x_n),f,p,q)$, where $((C,x_1,\dots ,x_n),f)$ is a stable map to $Z$ of degree $\beta'$ for some $i_*\beta '=\beta$, $q\in H^0(C,f^*N)$ and $p\in H^0(C,f^*N^\vee\otimes\omega_C)$. The cosection $\sigma_{\fN^N/\fB}$ agrees with that induced by $\sigma_{\fP/\fB}$. It has local expression
\begin{equation}\label{cosectionNN}
\sigma_{\fN^N/\fB}|_{\xi}: H^1(C,f^*A_N)\oplus H^1(C,f^*N)\oplus H^1(C,f^*N^\vee\otimes\omega_C)\to\bC
\end{equation}
given by $(\dot{f}, \dot{q},\dot{p})\mapsto \langle \dot{p},q \rangle+\langle p,\dot{q}\rangle$ over $\xi=[(C,x_1,\dots ,x_n),f,p,q]$.

\subsection{Deformation invariance}\label{ss:deform}
We are now in the position to apply \cref{deformationthm}, since we have $\fV^{\tilde{E}}$ with compatible perfect obstruction theories, relative to $\fB$ and $\fB\times\bA^1$ respectively, given by $\bE'_{\fV^{\tE}/\fB}$ and $\bE_{\fV^{\tE}/\fB\times\bA^1}$ . 
From \cref{VrelA} we can see that 
\[
\bE_{\fX^E/\fB}=\iota_1^*\bE_{\fV^{\tE}/\fB\times\bA^1}
\]
 since the restriction of $A$ to $V_1$ is the bundle $A_E$. So we just need to check that the bundle map $A\to A_{\tE}\to\tE$ restricted to $V_1$ agrees with $\delta s:A_E\to E$. This guarantees that the cosection $\sigma_{\fX^E/\fB}$ of \cref{ss: cosection} agrees with the one induced by $\sigma_{\fV^{\tE}/\fB}$. But $A\to\tE$ is induced by the differential of a morphism $V\to U_r\times \bA^1$ over $BGL_r\times\bA^1$, where $V\to U_r$ is induced by the section $y$. Restricting to $X\cong V_1$ this is $\delta s:A_E\to E$. 
Similarly,
\[
\bE_{\fN^N/\fB}=\iota_0^*\bE_{\fV^{\tE}/\fB\times\bA^1}
\]
and $\sigma_{\fN^N/\fB}$ is induced by restriction from $\sigma_{\fV^{\tE}/\fB}$.
\begin{theorem}\label{thm:def}
\[
[\fN^N/\fB]^\vir_{\sigma_{\fN^N/\fB}}=[\fX^E/\fB]^\vir_{\sigma_{\fX^E/\fB}}\in A_*(\fZ)
\]
\end{theorem}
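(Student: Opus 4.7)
The plan is to apply the deformation invariance result \cref{deformationthm} twice to the degeneration $\fV^{\tE}\to\bA^1$, once at $c=1$ (where the fiber is $\fX^E$) and once at $c=0$ (where the fiber is $\fN^N$), and then invoke the $\bA^1$-invariance of Chow groups to identify the two refined pullbacks of the $\bA^1$-family class $[\fV^{\tE}/\fB\times\bA^1]^\vir_{\sigma_{\fV^{\tE}/\fB\times\bA^1}}\in A_*(\fZ\times\bA^1)$.

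First I would spell out the setup at $c=1$. The discussion immediately preceding the theorem already checks that $\bE_{\fX^E/\fB}\simeq \iota_1^*\bE_{\fV^{\tE}/\fB\times\bA^1}$ (using $A|_{V|_1}\cong A_E$ from \cref{VrelA}) and that the restriction of $A\to A_{\tE}\xrightarrow{\delta y}\tE$ to the fiber over $1$ is precisely $\delta s:A_E\to E$, so the induced cosection agrees with $\sigma_{\fX^E/\fB}$. Thus \cref{deformationthm} yields
\[
[\fX^E/\fB]^\vir_{\sigma_{\fX^E/\fB}}=i_1^![\fV^{\tE}/\fB\times\bA^1]^\vir_{\sigma_{\fV^{\tE}/\fB\times\bA^1}}.
\]

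Next I would perform the analogous verification at $c=0$. Here \cref{VrelA} gives $A|_{V|_0}\cong\pi_{\Vb(N)/Z}^*(A_N\oplus N)$, and $\tE|_{V|_0}\cong\pi_{\Vb(N)/Z}^*N$, so
\[
\iota_0^*\bE^\vee_{\fV^{\tE}/\fB\times\bA^1}\simeq R^\bullet\pi_{\fN^N *}ev_{\fN^N}^*\pi_{\Vb(N)/Z}^*(A_N\oplus N)\oplus R^\bullet\pi_{\fN^N *}ev_{\fN^N}^*\pi_{\Vb(N)/Z}^*N^\vee\otimes\omega_{\pi_{\fN^N}},
\]
which matches $\bE^\vee_{\fN^N/\fB}$ of \cref{potN}. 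The section $y$ specializes to the tautological section $1$ of $\pi_{\Vb(N)/Z}^*N$, so the bundle map $A|_{V|_0}\to\tE|_{V|_0}$ is exactly the projection $\pi_{\Vb(N)/Z}^*(A_N\oplus N)\to\pi_{\Vb(N)/Z}^*N$ described in \cref{ss:zerofiber}, which is $\delta(1)$. Hence the induced cosection on $\iota_0^*\Ob_{\fV^{\tE}/\fB\times\bA^1}$ coincides with $\sigma_{\fN^N/\fB}$, and a second application of \cref{deformationthm} gives
\[
[\fN^N/\fB]^\vir_{\sigma_{\fN^N/\fB}}=i_0^![\fV^{\tE}/\fB\times\bA^1]^\vir_{\sigma_{\fV^{\tE}/\fB\times\bA^1}}.
\]

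Finally I would combine the two identities. The common class lives in $A_*(\fZ\times\bA^1)$, and for the trivial family $\fZ\times\bA^1\to\bA^1$ the flat pullback $\pi^*:A_*(\fZ)\to A_{*+1}(\fZ\times\bA^1)$ is an isomorphism; equivalently, $i_0^!=i_1^!$ as maps $A_*(\fZ\times\bA^1)\to A_*(\fZ)$. Applying this equality to the common class on the right-hand side yields the theorem. I expect the main technical point to be the comparison of the two cosections at $c=0$, since one must trace through the identification of the section $y$ of $\tE$ on $V|_0\cong\Vb(N)$ with the tautological section $1$; once that is pinned down via the explicit torsor description in the proof of \cref{VrelA}, everything else is a bookkeeping check of compatibilities for \cref{deformationthm}.
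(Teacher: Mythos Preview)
Your proposal is correct and follows essentially the same approach as the paper: apply \cref{deformationthm} at $c=1$ and $c=0$ to the family $\fV^{\tE}\to\bA^1$, then use $i_0^!=i_1^!$ on $A_*(\fZ\times\bA^1)$. The paper's proof is terser and leaves the $c=0$ compatibility check (which you spell out via \cref{VrelA} and the specialization $y\mapsto 1$) to the reader, so your version is if anything more complete.
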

\begin{proof}
By \cref{deformationthm} we have that $[\fX^E/\fB]^\vir_{\sigma_{\fX^E/\fB}}=i_1^![\fV^{\tE}/\fB]_{\sigma_{\fV^{\tE}}/\fB}$ and $[\fN^N/\fB]^\vir_{\sigma_{\fN^N/\fB}}=i_0^![\fV^{\tE}/\fB]_{\sigma_{\fV^{\tE}}/\fB}$. So the two classes are equal since $i_0^!=i_1^!:A_*(\fZ\times\bA^1)\to A_*(\fZ)$.  
\end{proof}

\section{Torus localization}\label{section:localization}
To compare $[\fN^N/\fZ]^\vir_{\sigma_{\fN^N/\fZ}}$ with $[\fZ/\fB]^\vir$, we will use virtual torus localization for cosection-localized virtual classes \cref{cosloc}.  

Let $T=\bC^*$ act on $Z$ trivially, and choose a non-trivial equivariant structure on $N$. We may choose the action that scales the fibers of $N$ with weight 1 and the fibers of the dual $N^{\vee}$ with weight -1. This induces a $T$-action on $\fN^N$, on an object $\xi$ in $\fN^N(\bullet)$, $t\in T$ acts by
\[
t((C,x),f,q,p,)=((C,x),f,tq,t^{-1}p)
\]
The fixed locus is $(\fN^N)^T\cong\fZ\xrightarrow{\iota}\fN^N$, the inclusion is by vanishing of the $p$ and $q$-fields and geometrically is just the inclusion of the vertex of a cone, and $\iota\circ\pi_{\fN^N/\fZ}=Id_{\fZ}$. By \cref{prop:potN^N}, we can identify $\fN^N$ with an open substack of the moduli of sections of 
\begin{equation}\label{evalN^N}
\cZ_{\fN^N}:=\left(Z\times_{BLG_r}\fC_{\fB}\right)\times_{\fC_{\fB}}\Vb\left(\cE_{\fB}\oplus \cE_{\fB}^{\vee}\otimes\omega_{\fB}\right)\to
\fC_{\fB}
\end{equation}
over $\fB$. Let $T$ act with weight 1 on $\cE_{\fB}$ and weight -1 on $\cE_{\fB}$, trivially on $\fB$ and $\fC_{\fB}$. The morphism $\fN^N\to\fB$ as well as the universal evaluation $Ev_{\fN^N}:\fC_{\fN^N}\to \left(Z\times_{BLG_r}\fC_{\fB}\right)\times_{\fC_{\fB}}\Vb\left(\cE_{\fB}\oplus \cE_{\fB}^{\vee}\otimes\omega_{\fB}\right)$ are $T$-equivariant.

\begin{lemma}
The relative perfect obstruction theory $\bE_{\fN^N/\fB}$ is $T$-equivariant. Restricted to the $T$-fixed locus $\fZ$ it splits as $\bE_{\fN^N/\fB}|_{\fZ}=\bE_{\fN^N/\fB}|_{\fZ}^\fixd\oplus\bE_{\fN^N/\fB}|_{\fZ}^\movn$. The fixed part is the usual perfect obstruction theory for $\fZ/\fB$ of \cref{potZ}. 
The moving part is $N^\vir=(\bE_{\fN^N/\fB}|_{\fZ}^{\movn})^\vee=R^\bullet\pi_{\fZ *}(\cE_{\fZ}\oplus\cE_{\fZ}^{\vee}\otimes\omega_{\fZ})$.
\end{lemma}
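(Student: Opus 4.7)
The plan is straightforward given the explicit description of $\bE_{\fN^N/\fB}^\vee$ furnished by \cref{prop:potN^N}. First, for $T$-equivariance: the moduli $\fN^N$ was realized as an open substack of the moduli of sections of $\cZ_{\fN^N}\to\fC_\fB$ in \cref{evalN^N}, and the $T$-action on $\fN^N$ is induced from the $T$-action on $\cZ_{\fN^N}$ (weight $+1$ on the $\Vb(\cE_\fB)$ factor, weight $-1$ on the $\Vb(\cE_\fB^\vee\otimes\omega_{\pi_\fB})$ factor, trivial on $Z\times_{BGL_r}\fC_\fB$). Since $\fC_{\fN^N}\to\fN^N$, its projection, and the universal evaluation morphism are all $T$-equivariant by construction, with $\fB$ and $\fC_\fB$ carrying the trivial action, the derived pushforward construction of $\bE_{\fN^N/\fB}^\vee$ in \cref{prop:potN^N} canonically equips it with a $T$-equivariant structure for which $\phi_{\fN^N/\fB}^\vee$ is equivariant.

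Next, the zero-section inclusion $\iota:\fZ\hookrightarrow\fN^N$ satisfies $\pi_{\fN^N/\fZ}\circ\iota=\Id_\fZ$, so restricting the formula of \cref{prop:potN^N} gives
\[
\bE_{\fN^N/\fB}^\vee\bigl|_\fZ \;=\; \bE_{\fZ/\fB}^\vee\,\oplus\, R^\bullet\pi_{\fZ*}\cE_\fZ\,\oplus\, R^\bullet\pi_{\fZ*}\bigl(\cE_\fZ^\vee\otimes\omega_{\pi_\fZ}\bigr).
\]
Any $T$-equivariant perfect complex on the fixed locus splits canonically into weight-isotypic summands, and $R^\bullet\pi_{\fZ*}$ preserves weights, so the fixed/moving decomposition is read off summand-by-summand. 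The relevant weight computations are: $\cE_\fZ=ev_\fZ^*N$ has weight $+1$ and $\cE_\fZ^\vee\otimes\omega_{\pi_\fZ}$ has weight $-1$, while $\bE_{\fZ/\fB}^\vee=R^\bullet\pi_{\fZ*}ev_\fZ^*A_N$ has weight $0$, since in the Atiyah extension $0\to\End(N)\to A_N\to T_Z\to 0$ both $\End(N)=N\otimes N^\vee$ and $T_Z$ carry the trivial $T$-action.

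This identifies the weight-$0$ isotypic component of $\bE_{\fN^N/\fB}^\vee|_\fZ$ as $\bE_{\fZ/\fB}^\vee$, and the sum of the nonzero isotypic components as $R^\bullet\pi_{\fZ*}(\cE_\fZ\oplus\cE_\fZ^\vee\otimes\omega_{\pi_\fZ})$. Dualizing (which swaps the weights $\pm 1$ but preserves the fixed/moving dichotomy) yields the claimed decomposition of $\bE_{\fN^N/\fB}|_\fZ$, and $N^\vir=(\bE_{\fN^N/\fB}|_\fZ^\movn)^\vee$ is then the stated complex. The main point needing care is that this weight-$0$ piece is not merely abstractly isomorphic to $\bE_{\fZ/\fB}^\vee$ but equals it together with its structure morphism $\phi_{\fZ/\fB}^\vee$; this is precisely the compatibility between $\bE_{\fN^N/\fB}^\vee$, $\bE_{\fZ/\fB}^\vee$ and the fiber contribution asserted at the end of \cref{prop:potN^N}, which was obtained by pulling back a split short exact sequence on $\fC_\fZ$ and so manifestly respects the weight-grading.
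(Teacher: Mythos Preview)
Your proposal is correct and follows essentially the same approach as the paper: both establish $T$-equivariance via the universal-evaluation description \eqref{evalN^N}, use the splitting formula from \cref{prop:potN^N} restricted along $\iota$, and compute the weights summand-by-summand, observing that $A_N$ has trivial $T$-action from the Atiyah extension. Your final remark about compatibility with $\phi_{\fZ/\fB}^\vee$ is a slight elaboration beyond what the paper makes explicit, but it is correct and in the same spirit.
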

\begin{proof}
The perfect obstruction theory $\bE_{\fN^N/\fB}$ described in \cref{potN} is quasi-isomorphic to the one induced by induced by the universal evaluation of $Ev_{\fN^N}$ of \cref{evalN^N}. This follows from the discussion in \cref{ss:zerofiber}.  The diagram
\[
\xymatrix{
 & \cZ_{\fN^N}\ar[d]\\
 \fC_{\fN^N}\ar[r]\ar[ur]^{Ev_{\fN^N}} & \fC_{\fB}\\
}
\]
is equivariant. Then we have a $T$-equivariant morphism of $T$-equivariant cotangent complexes $\bL_{\cZ_{\fN^N}/\fC_{\fB}}\to Ev_{\fN^N}^*\bL_{\fC_{\fN^N}/\fC_{\fB}}$. Applying $R^{\bullet}\pi_{\fN^N *}$ to this morphism recovers the perfect obstruction theory of $\fN^N\to\fB$ as a $T$-equivariant morphism. The dual perfect obstruction theory of $\fN^N/\fB$ is quasi-isomorphic to 
\[
\bE_{\fN^N/\fB}^{\vee}=\pi^*_{\fN^N/\fZ}\left(\bE_{\fZ/\fB}^{\vee}\oplus R^{\bullet}\pi_{\fZ*}(\cE_{\fZ}\oplus\cE_{\fZ}^{\vee}\otimes\omega_{\fZ})\right).
\]
The torus $T$ acts on the second component only. Recall that $T$ acts trivially on $Z$ and with opposite weights on $N$ and $N^{\vee}$ so the torus action on $A_N$ is trivial by the short exact sequence
\[
0\to N^{\vee}\otimes N\to A_N\to T_Z\to 0.
\]
The restriction to the fixed locus $\fZ\xhookrightarrow{}\fN^N$ is $\bE_{\fN^N/\fB}|_{\fZ}=\bE_{\fZ/\fB}\oplus\pi^*_{\fZ/\fB} 0^*\bE_{\fP/\fB}$ where $0$ denotes the inclusion of $\fB$ in $\fP$ as the zero section. The fixed part is $\bE_{\fN^N/\fB}|_{\fZ}^{\fixd}=\bE_{\fZ/\fB}$. The moving part is $N^\vir=R^\bullet\pi_{\fZ *}(\cE_{\fZ}\oplus\cE_{\fZ}^{\vee}\otimes\omega_{\fZ})$.
\end{proof}

Consider the absolute perfect obstruction theory $\bE_{\fN^N}$ induced by $\bE_{\fN^N/\fB}$. That is the perfect obstruction theory which completes the equivariant commutative diagram
\[
\xymatrix{
\bE_{\fN^N/\fB} \ar[r]\ar[d]&\bL_{\fB}[1]\ar[d]^{=}\\ 
\bL_{\fN^N/\fB}\ar[r] & \bL_{\fB}[1]
}
\]
to an equivariant distinguished triangle in the $T$-equivariant derived category of $\fN^N$
\[
\xymatrix{
\bE_{\fN^N}\ar[d]\ar[r]& \bE_{\fN^N/\fB} \ar[r]\ar[d]&\bL_{\fB}[1]\ar[d]^{=}\\ 
\bL_{\fN^N}\ar[r] & \bL_{\fN^N/\fB}\ar[r] & \bL_{\fB}[1].
}
\]
Since $\fB$ is a smooth Artin stack, taking the long exact sequence in cohomology shows that the complex $\bE_{\fN^N}$ is concentrated in degrees $[-1,0]$. It is a perfect obstruction theory by  the 5-lemma and is $T$-equivariant by construction. 
The obstruction sheaf $\Ob_{\fN^N}=h^1(\bE_{\fN^N}^{\vee})$ is the same as that constructed \'{e}tale locally in \cite[Definition 4.1]{KL}. The cosection $\sigma_{\fN^N/\fB}$ induced by the tautological section of the vector bundle $\pi_{\Vb(N)/Z}^*N$ is $T$-equivariant by the expression in \cref{cosectionNN}. So the absolute cosection $\sigma_{\fN^N}$ is $T$-equivariant.
To apply the cosection localized torus localization result \cite[Theorem 3.5]{CKL}, we need to show that $N^\vir$ has a global equivariant resolution $[N_0\to N_1]$ by locally free sheaves.
Following in \cite[Proposition 5]{GWinAG}, choose some ample line bundle $L$ on $Z$ with trivial $T$-action. Now 
\[
\cL\coloneqq ev_{\fZ}^*L\otimes \omega_{\pi_{\fZ}}(\Sigma)
\]
 is $\pi_{\fZ}$-ample, where $\Sigma$ is union of the image of the marked points $s_i:\fZ\to\fC_{\fZ}$, and has trivial $T$-action. Then for $N$ sufficiently large 
 \[
 \pi_{\fZ}^*\pi_{\fZ*}(\cE_{\fZ}\otimes\cL^{\otimes N})\to(\cE_{\fZ}\otimes\cL^{\otimes N})
 \]
 is surjective and 
 \[
 R^1\pi_{\fZ*}(\cE_{\fZ}\otimes\cL^{\otimes N})=0.
 \]
  Fix this $N$ and let 
  \[
  F_1=\pi_{\fZ}^*\pi_{\fZ*}(\cE_{\fZ}\otimes\cL^{\otimes N})\otimes\cL^{\otimes-N}.
  \]
  So $F_1\to\cE_{\fZ}$ is surjective and $T$-equivariant by construction. The is kernel some locally free sheaf $F_0$. By construction, $R^1\pi_{\fZ*} F_1$ (hence $R^1\pi_{\fZ*} F_0$) is locally free. Then we have
  \[
  0\to \pi_{\fZ*}(\cE_{\fZ})\to R^1\pi_{\fZ*}F_0\to R^1\pi_{\fZ*}F_1\to R^1\pi_{\fZ*}(\cE_{\fZ})\to 0
  \]
 Let $\widetilde{F_i}:=R^1\pi_{\fZ*}F_i$, we can take $[\widetilde{F_0}\oplus \widetilde{F_1}^{\vee}\to \widetilde{F_1}\oplus \widetilde{F_0}^{\vee}]$ as an equivariant locally free resolution for $R^\bullet\pi_{\fZ *}(\cE_{\fZ}\oplus\cE_{\fZ}^{\vee}\otimes\omega_{\fZ})$.
which is the required resolution. We are ready to apply \cref{cosloc}, which in our situation asserts that 
\[
[\fN^N]_{\sigma_{\fN^N}}^\vir=[\fN^N/\fB]_{\sigma_{\fN^N/\fB}}^\vir=\frac{[\fZ/\fB]^\vir}{e_T(N^{\vir})}\in A^T_*(\fZ)\otimes_{\bbQ}\bbQ[t,t^{-1}].
\]

By definition,
\[
e_T(N^{\vir})=\frac{e_T(R^1\pi_{\fZ*}N_0)}{e_T(R^1\pi_{\fZ*}N_1)}.
\]
for a locally free resolution $[N_0\to N_1]\cong N^\vir$. Let $[\widetilde{F_0}\to \widetilde{F_1}]$ be a global locally-free resolution by sheaves of rank $f_0$ and $f_1$ respectively of $R^\bullet\pi_{\fZ*}\cE_{\fZ}$, constructed above. Then 
\[
e_T(N^{\vir})=\frac{e_T(\widetilde{F_0})e_T(\widetilde{F_1}^{\vee})}{e_T(\widetilde{F_1})e_T(\widetilde{F_0}^{\vee})}=(-1)^{f_0+f_1}.
\]
Observe that
\[
f_0+f_1\equiv f_0-f_1=\chi(f^*N)=\int_{\beta}c_1(E)+r(1-g) \;\; \mbox{mod} 2.
\]
By the results of \cref{thm:def}, \cref{remZ} and \cref{cosloc} we conclude the following.
\begin{theorem}
The cosection localized virtual class of the moduli of stable maps with $p$-fields to $X$ agrees up to a sign with the virtual class of the moduli of stable maps to $Z$:
\[
[\fX^E/\fB]^{\vir}_{\sigma_{\fX^E/\fB}}=(-1)^{\int_{\beta}c_1(E)+r(1-g)}[\fZ/\fB]^{vir}=(-1)^{\int_{\beta}c_1(E)+r(1-g)}[\fZ/\fM]^{vir}.
\]
\end{theorem}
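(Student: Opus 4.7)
The plan is to chain together the three principal results already established in the paper: the deformation-to-the-normal-cone comparison (Theorem~\ref{thm:def}), the cosection-localized torus localization formula (Theorem~\ref{cosloc}), and the explicit sign computation arising from $e_T(N^{\vir})$.

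First I would invoke Theorem~\ref{thm:def} to reduce the statement on $\fX^E$ to a statement on $\fN^N$: namely, $[\fX^E/\fB]^{\vir}_{\sigma_{\fX^E/\fB}}=[\fN^N/\fB]^{\vir}_{\sigma_{\fN^N/\fB}}\in A_*(\fZ)$. Since $\fN^N$ is a cone over $\fZ$ with a fiberwise $T=\bC^*$-action (scaling the $q$-fields with weight $1$ and the $p$-fields with weight $-1$) whose fixed locus is precisely the zero section $\fZ$, it is on this cone that the virtual localization machinery becomes applicable.

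Next I would apply Theorem~\ref{cosloc} to obtain
\[
[\fN^N/\fB]^{\vir}_{\sigma_{\fN^N/\fB}}=\frac{[\fZ/\fB]^{\vir}}{e_T(N^{\vir})},
\]
where the virtual normal bundle is $N^{\vir}=R^{\bullet}\pi_{\fZ *}(\cE_{\fZ}\oplus \cE_{\fZ}^{\vee}\otimes\omega_{\pi_{\fZ}})$ by the splitting of $\bE_{\fN^N/\fB}|_{\fZ}$ into fixed and moving parts. Serre duality makes the two summands dual up to the trivial character, so on any equivariant locally free resolution $[F_0\to F_1]$ of $R^{\bullet}\pi_{\fZ *}\cE_{\fZ}$ the induced resolution of $N^{\vir}$ is $[F_0\oplus F_1^{\vee}\to F_1\oplus F_0^{\vee}]$, giving the key cancellation
\[
e_T(N^{\vir})=\frac{e_T(F_0)\,e_T(F_1^{\vee})}{e_T(F_1)\,e_T(F_0^{\vee})}=(-1)^{f_0+f_1}.
\]
Using $f_0+f_1\equiv f_0-f_1=\chi(f^*N)=\int_{\beta}c_1(E)+r(1-g)\pmod 2$ identifies the sign as claimed.

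Finally, invoking Remark~\ref{remZ} identifies $[\fZ/\fB]^{\vir}=[\fZ/\fM]^{\vir}$, giving the second equality in the theorem and hence completing the chain. The main conceptual obstacle has already been handled earlier: it is the passage, via Theorem~\ref{thm:def}, from a moduli of stable maps with $p$-fields over an arbitrary smooth projective $X$ to the cone $\fN^N$ over $\fZ$, together with the verification that the deformed cosection matches $\sigma_{\fN^N/\fB}$. With those in hand, the proof of the final statement is essentially a bookkeeping combination: substitute Theorem~\ref{thm:def} into Theorem~\ref{cosloc}, read off the sign from the Serre-dual structure of $N^{\vir}$, and apply Riemann--Roch to rewrite $\chi(f^*N)$ in terms of $c_1(E)$ and $r(1-g)$.
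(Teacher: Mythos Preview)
Your proposal is correct and follows essentially the same route as the paper: the final theorem is assembled by combining Theorem~\ref{thm:def}, Theorem~\ref{cosloc}, the sign computation $e_T(N^{\vir})=(-1)^{f_0+f_1}$ with $f_0+f_1\equiv\chi(f^*N)=\int_\beta c_1(E)+r(1-g)\pmod 2$, and Remark~\ref{remZ}. There is no substantive difference between your argument and the paper's.
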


\section{The Deligne--Mumford case}

\subsection{Target}
We now adapt our discussion to consider a more general target. Let $\ccX$ denote a smooth Deligne--Mumord stack with projective coarse moduli $X$. Let $\ccE$ be a locally-free sheaf of rank $r$ on $\ccX$ and $s$ denote a regular section. The vanishing locus of $s$ is a smooth Deligne--Mumford stack $\ccZ$ with projective coarse moduli space $Z$. \'{E}tale-locally $\ccX$ is the quotient $[U/\Gamma]$ of an affine scheme $U$ by a finite group $\Gamma$ and $\ccE$ is an equivariant vector bundle $F$ over $U$. The section $s$ locally is some $\Gamma$-equivariant section $\tilde{s}:U\to F$, which gives a local description of $\ccZ$ as $[Z(\tilde{s})/\Gamma]$.

We also introduce the inertia stacks of $\ccX$ and $\ccZ$,  these will be the target of the evaluation morphisms at the marked points. These evaluations did not play a role in the non-orbifold construction, but will be important here to compute the virtual dimensions of the moduli spaces. 

 Let 
\[
I\ccX=\bigsqcup_{j\in\bbN}\Hom\mathrm{Rep}(B\mu_j,\ccX)=\bigsqcup_{\lambda\in\Lambda}\ccX_{\lambda}
\]
be the inertia stack of $\ccX$. An object of $I\ccX$ is a pair $(x,g)$ of an object $x$ of $\ccX$ and $g\in\Aut_{\ccX}(x)$. $\ccX_\lambda$ are the connected components, indexed by the finite set $\Lambda$. On each conencted component, $\mathrm{order}(g)$ is constant. There is a natural projection $q:I\ccX\to\ccX$ which is a finite \'{e}tale morphism.

$\ccZ$ is a closed substack of $\ccX$, in particular $\Aut_{\ccZ}(x)=\Aut_{\ccX}(x)$ for points $x$ of $\ccZ$. Then $I\ccZ$ is a substack of $I\ccX$ and we can decompose the inertia stack of $\ccZ$ as 
\[
I\ccZ=\bigsqcup_{\lambda\in\Lambda}\ccZ_{\lambda}.
\]
The substacks $\ccZ_{\lambda}$ may be disconnected (if $\ccZ$ itself is disconnected).

\subsubsection{The pair $(\ccX,\ccE)$}
We can consider the frame bundle of the vector bundle $\Vb(\ccE)$, which is a morphism
\[
\psi_{\ccE}:\ccX\to BGL_r=[\bullet/GL_r]
\]
as before, this morphism is smooth and defines a locally-free sheaf $A_{\ccE}=h^0(\bbT_{\psi_{\ccE}})$ which fits into a short exact sequence

\begin{equation}\label{AcurlyE}
0\to\End(\ccE)\to A_{\ccE}\to T_{\ccX}\to 0.
\end{equation}

The formation of the Atiyah sequence above is possible for DM stacks by the observation that for $f:U\to V$ an \'{e}tale morphism and $F$ a vector bundle on $V$, we have $f^*A_{F}\cong A_{f^*F}$ as detailed for example in \cite{MR2923408}.

\subsection{A section $s$ of $E$.}
Let $s$ be a section of the vector bundle $\Vb(\ccE)\to\ccX$. This defines a morphism $\psi_s$ as below
\[
\xymatrix{
\Vb(\ccE)\ar[r]\ar[d] & [\bbC^r/GL_r]\ar[d]\\
\ccX\ar[r]^{\psi_{\ccE}}\ar[ur]^{\psi_s} & BGL_r.
}
\]
Define $\delta s$ as the relative differential of $\psi_s$:
\[
\delta s: A_{\ccE}=\bbT_{\ccX/BGL_r}\to\psi_s^*\bbT_{[\bbC^r/GL_r]/BGL_r}=\ccE.
\]

\subsubsection{Regularity} 
We now assume that $s$ is a {regular} section, in the sense that $s_U$ is a regular section of $\ccE_U$ for any \'etale cover $U\to\ccX$.
Then $\ccZ$ is a smooth substack of $\ccX$ of codimension $r$, with projective coarse moduli space $Z$. This also implies that
$$
\delta s: A_{\ccE}\to \ccE
$$  
is a surjective map of locally-free sheaves.  
As before, the section $s$ can be seen as defining a superpotential
\begin{align*}
W:\Vb(\ccE^{\vee})\to\bbC\\
(x,p)\mapsto\langle s(x),p\rangle.
\end{align*}
Since $s$ is regular, the degeneracy locus of $W$ is $\ccZ$. 

\subsection{Quantum Version}
We need to make modifications to account for the fact that the target of our stable maps may have non-trivial stabilizers. This requires to extend the domain curves being considered to allow for some orbifold structure at nodes and marked points.
\subsection{Moduli stacks independent of $(\ccX,\ccE,s)$}\label{ss: SandP}
Let $\fM^{\tw}=\fM^{\tw}_{g,n}$  be the moduli stack of genus $g$, $n$-pointed twisted balanced curves with sections of all the gerbes \cite[Definition 3.5.5]{AGV01}. For a connected scheme $T$, an object of $\fM^{\tw}$ is a diagram
\[
\xymatrix{
\Sigma_i\ar[dr]\subset&C_T\ar[d]\\
&|C_T|\ar[d]\\
&T
}
\]
and isomorphisms $\varphi_i:T\times B\mu_{r_i}\to (\Sigma_T)_i$ for $i=1,\dots n$ where \begin{enumerate}
\item $C_T$ is a Deligne--Mumford stack, proper over $T$ and \'{e}tale locally a nodal curve over $T$.
\item $(\Sigma_T)_i\subset C$ for $i=1,\dots,n$ are disjoint closed substacks in the smooth locus of $C_T\to T$. 
\item $(\Sigma_T)_i\to T$ is a gerbe banded by $\mu_{r_i}$ (the order $r_i$ is well-defined since $T$ is connected) trivialized by $\varphi_i$.
\item $|C_T|$ is the coarse moduli of $C_T$, a nodal curve over $T$ isomorphic to $C_T$ away from nodes and markings
\item The action of the stabilizer group at the nodes is balanced.
\end{enumerate}
We will denote an object of $\fM^{\tw}(\bullet)$ by $[(C,\Sigma_1,\dots ,\Sigma_n)]$.
 Then $\fM^{\tw}$ is a smooth Artin stack of dimension $3g-3+n$ and the morphism $\fM^{\tw}\to\fM$ fogetting the orbifold structure is a DM stack locally of finite type \cite[Theorem 1.10, Corollary 1.12]{O07}, \cite[Proposition 2.2.1]{AJ03}. $\fM^{\tw}$ decomposes into open and closed substacks indexed by $(r_1,\dots,r_n)$, $n$-tuples recording the orders of the automorphism groups at the marked points. 
Let $\fC_{\fM^{\tw}}\to \fM^{\tw}$ be the universal curve.

We can repeat the constructions of the previous sections to define the moduli space 
\[
\fB^{\tw}:=\Bun^{\tw}_{g,n,r,d}
\] of twisted genus $g$ $n$-pointed curves together with a rank $r$ vector bundle of degree $d\in\bbQ$. This is a smooth Artin stack, but it has components of various dimensions. To describe them, we recall the notion of the \textit{age} of a vector bundle over a DM stack. 
\begin{definition}\label{def:age}
Let $\ccF$ be a locally-free sheaf on a DM stack $\ccY$,and $q:I\ccY\to\ccY$ be the inertia stack of $\ccY$. Let $(y,g)\in I\ccY$. Since $g$ is finite, the action of $g$ on the fiber $q^*\ccF$ is diagonalizable and decomposes it into a direct sum $\bigoplus_{0\leq k< j}q^*\ccF_{(y,g)}^{(k)}$, where $g$ acts on $q^*\ccF_{(y,g)}^{(k)}$ by $e^{2\pi\sqrt{-1}k/j}$. The age of $\ccF$ at $(y,g)$ is defined as
\[
\age_{(y,g)}(\ccF)=\frac{1}{j}\sum_{0\leq k< j}k\cdot\mathrm{dim}(q^*\ccF_{(y,g)}^{(k)}).
\]
\end{definition}
Note that with our definitions, a marked point gives an object of the inertia stack $I C$ by composing its inclusion map with $\varphi_i$, $[B\mu_{r_i}\cong\Sigma_i\to C]\in\mathrm{Ob}(IC)$. So it makes sense to speak of the age of a vector bundle at a marked point. 
With this definition, we have the following version of Riemann--Roch for orbi-curves.
\begin{theorem}\cite[Theorem 7.2.1]{AGV08}
Let $(C,\Sigma_1,\dots,\Sigma_n)$ be a balanced twisted curve over $\bbC$ of genus $g$ and $\ccF$ a locally-free sheaf of rank $r$, then
\[
\chi(\ccF)=r(1-g)+\deg(\ccF)-\sum_{i=1}^n\age_{\Sigma_i}(\ccF).
\]
\end{theorem} 
Then the relative dimension of $\fB^{\tw}\to\fM^{\tw}$ at a point $[(C,\Sigma_1,\dots,\Sigma_n),\ccF]$  is $-\chi(\End(\ccF))$, which depends on the age of the bundle at the marked points. Say $\Sigma_i\cong B\mu_{r_i}$, the generator $\zeta_{r_i}$ of $\mu_{r_i}$ acts diagonalizably on the fiber of $\ccF$ with eigenvalues $e^{2\pi\sqrt{-1}k/{r_i}}$ for $0\leq k<r_i$ each with multiplicity $m_{k}$ (possibly zero), so that \[\age_{\Sigma_i}(\ccF)=\sum_{0\leq k< r_i}\frac{k}{r_i}m_{k}.\] Note that $(m_0,\dots,m_{r_i-1})$ gives a partition of $r$. We can see that 
\begin{equation}\label{dimflag}
\age_{\Sigma_i}(\End(\ccF))=\sum_{0\leq k<l<r_i} m_{k} m_l=\dim(F_{m_0,\dots,m_{r_i-1}})
\end{equation}
where $F_{m_0,\dots,m_{r_i-1}}$ is the partial flag variety defined by the given partition.

This age only depends on the conjugacy class $a_i$ defined by the image of the generator $\zeta_{r_i}$. Fix an n-tuple $\underline{a}:=(a_1,\dots ,a_n)$, we can define an open and closed substack $\fB^{tw}(\underline{a})$ of $\fB^{\tw}$ of constant dimension by fixing the representations of the universal marked points on the universal bundle. 
Indeed, having fixed a trivialization of the gerbes gives evaluation maps at the marked points $ev_i: \fB^{tw}\to IBGL_r$ the inertia stack of $BGL_r$ has components indexed by conjugacy classes $IBGL_r=\bigsqcup (BGL_r)_a$. We are decomposing $\fB^{\tw}$ according to the image of the evaluation maps at the marked points. 

The dimension of $\fB^{\tw}(\underline{a})$ is 
\begin{align*}
d_{\fB^{\tw}(\underline{a})}&=3g-3+n-(r^2(1-g)-\sum_{i=1}^n \age_{\Sigma_i}(\End(\ccF))\\
&= (3+r^2)(g-1)+n+\sum_{i=1}^n \dim(F_{m_0,\dots,m_{r_i-1}}).
\end{align*}

Finally, let $\ffH^{\tw}:=\pi_{\fB^{\tw}*}\omega_{\pi_{\fB^{\tw}}}$ denote the Hodge bundle over $\fB^{\tw}$, the projection $\ffH^{\tw}\to\fB^{\tw}$ is a vector bundle of rank $g$.

\subsection{Moduli of maps to $\ccX$}\label{curvestoX}
Fix the triple $g,n,\beta$. The moduli of stable twisted maps of degree $\beta\in H_2(X,\bZ)$ to $\ccX$ with sections of all the gerbes is the moduli space $\fX^{\tw}=\fM_{g,n}(\ccX,\beta)\subset\Hom_{\ccX}(\fC_{\fM^{\tw}},\ccX)$. A map from a twisted curve $[(C,\Sigma_1,\dots ,\Sigma_n)]\in\fM^{\tw}$ is a representable morphism
\[
f:(C,\Sigma_1,\dots,\Sigma_n)\to \ccX
\]
such that the underling morphism of coarse moduli spaces $|f|:|C|\to X$ has degree $\beta$ and the automorphism group $\Aut_{\ccX}(f,\Sigma_i)$ of $f$ fixing the marked points is finite. 

$\fX^{\tw}$ is a proper stack with projective coarse moduli space, the morphism $\fX^{\tw}\to\fX=\fM_{g,n}(X,\beta)$ is of finite type. Let, in keeping with our notation so far, $\pi_{\fX^{\tw}}: \fC_{\fX^{\tw}}\to \fX^{\tw}$ be the universal curve over $\fX^{\tw}$, and let $ev_{\fX^{\tw}}: \fC_{\fX^{\tw}}\to \ccX$ be the universal evaluation map. 
 There is a perfect obstruction theory relative to $\fM^{\tw}$ given by
\begin{equation}\label{potXtwMtw}
\phi_{\fX^{\tw}/\fM^{\tw}}:\bE_{\fX^{\tw}/\fM^{\tw}}=\left(R^\bullet \pi_{\fX^{\tw}*}ev^*_{\fX^{\tw}}T_{\ccX} \right)^{\vee}.
\end{equation}
The virtual dimension of $\fX^{\tw}$ is more subtle to compute than that of its counterpart $\fX$, as the orbifold version of the Riemann-Roch formula takes into account the age of marked points.

\begin{definition}\cite[Definition 2.3.3]{Tseng}
The age of a substack $\ccX_{\lambda}\subset I\ccX$ is defined as
\[
\age(\ccX_\lambda)=\age_{(x,g)}T_{\ccX}
\]
for any object $(x,g)$ of $\ccX_\lambda(\bullet)$. It is independent of the chosen representative. 
\end{definition}
\begin{remark}
More generally, the age of any vector bundle on $\ccX$ is constant on connected components of the inertia stack, so we will use notation $\age_{\ccX_{\lambda}}(\ccE)$.
\end{remark}

The universal family of $\fX^{\tw}$ includes the data of evaluation maps at the marked points:
\[
ev_i:\fX^{\tw}\to I\ccX=\bigsqcup_{\lambda\in\bbN}\ccX_\lambda. 
\]
These allow to decompose the $\fX^{\tw}$ into open and closed substacks
\[
\fX^{\tw}=\bigsqcup_{\lambda_1,\dots,\lambda_n}\fX^{\tw}(\lambda_1,\dots,\lambda_n)
\]
with
\[
\fX^{\tw}(\lambda_1,\dots,\lambda_n)=\bigcap_{i=1}^nev_i^{-1}(\ccX_{\lambda_i}).
\]

Let $[(C,\Sigma_1,\dots,\Sigma_n),f]$ an object of $\fX^{\tw}(\lambda_1,\dots,\lambda_n)$, we have
\begin{align*}
d^{\vir}_{\fX^{\tw}(\lambda_1,\dots,\lambda_n)/\fM^{\tw}}&=\chi(f^*T_{\ccX})\\
&= \dim(\ccX)(1-g)+\deg(f^*T_{\ccX})-\sum_{i=1}^n \age_{\Sigma_i}(f^*T_{\ccX})\\
&= \dim(\ccX)(1-g)+\deg(f^*T_{\ccX})-\sum_{i=1}^n \age(\ccX_{\lambda_i})
\end{align*}
The perfect obstruction theory in \cref{potXtwMtw} restricts to each open and closed substack $\fX^{\tw}(\lambda_1,\dots,\lambda_n)$ and we can define $[\fX^{\tw}/\fM^{\tw}]^{\vir}$ as
\[
[\fX^{\tw}/\fM^{\tw}]^{\vir}=\sum_{\lambda\in\Lambda}[\fX^{\tw}(\lambda_1,\dots,\lambda_n)/\fM^{\tw}]^{\vir}\in A_*(\fX^{\tw}).
\]
Where the cycles on the right-hand-side are understood as pushed forward to $ A_*(\fX^{\tw})$. This is now a linear combination of cycles in different dimensions.

\subsubsection{A perfect obstruction theory for $\pi_{\fX^{\tw}/\fB^{\tw}}: \fX^{\tw}\to \fB^{\tw}$}\label{curvestoX/B}

Having fixed the sheaf $\ccE$ gives a forgetful morphism 
\[
\fX^{\tw}\to \fB^{\tw}
\]
where the degree $d$ implicit in the notation of the second stack is 
\[
d=\frac{1}{m}\int_{\beta}c_1(L)
\]
where $L$ is a line bundle on $X$ such that $det(\ccE)^{\otimes m}=\pi_{\ccX/X}^*L$.

As in \cref{Tor1} have the following commutative diagram

\begin{equation}
\xymatrix{
\fC_{\fX^{\tw}}\ar[d]_{\overline{\pi_{\fX^{\tw}/\fB^{\tw}}}}\ar[r]^{ev_{\fX^{\tw}}} & \ccX\ar[d]^{\psi_\ccE}\\
\fC_{\fB^{\tw}}\ar[r]^{ev_{\fB^{\tw}}} & BGL_r
}
\end{equation}
which induces a morphism of cotangent complexes
\[
ev_{\fX^{\tw}}^*\bL_{\ccX/BGL_r} \to\bL_{\fC_{\fX^{\tw}}/\fC_{\fB^{\tw}}}=\pi_{\fX}^*\bL_{\fX^{\tw}/\fB^{\tw}}.
\]
Dualizing and pushing forward to we obtain:
\[
\phi_{\fX^{\tw}/\fB^{\tw}}^\vee: \bT_{\fX^{\tw}/\fB^{\tw}}= R^{\bullet}\pi_{\fX^{\tw}*}\pi_{\fX^{\tw}}^*\bT_{\fC_{\fX^{\tw}}/\fC_{\fB^{\tw}}}\to R^{\bullet}\pi_{\fX^{\tw}*}ev_{\fX^{\tw}}^*A_\ccE.
\]
 
\begin{proposition}
The morphism 
\begin{equation}\label{potXtwBtw}
\phi_{\fX^{\tw}/\fB^{\tw}}:  \bE_{\fX^{\tw}/\fB^{\tw}}=(R^{\bullet}\pi_{\fX^{\tw}*}ev_{\fX^{\tw}}^*A_{\ccE})^{\vee}\to\bL_{\fX^{\tw}/\fB^{\tw}}.
\end{equation}
is a perfect obstruction theory for $\pi_{\fX^{\tw}/\fB^{\tw}}$. 

\end{proposition}
\begin{proof}
This is identical to the proof of \cref{potX}.
\end{proof}
We can now construct a virtual fundamental class $[\fX^{\tw}/\fB^{\tw}]^{\vir}$ in the same way as $[\fX^{\tw}/\fM^{\tw}]^{\vir}$. 

Consider $\psi_{\ccE}:\ccX\to BGL_r$, the corresponding map $I\psi_{\ccE}$ between inertia stacks sends each component $\ccX_{\lambda_i}$ to a component $(BGL_r)_{a_i}$ for some conjugacy class $a_i$. Let $a(\underline{\lambda})$ denote the $n$-tuple of conjugacy classes of $GL_r$ obtained from an $n$-tuple $\lambda_1,\dots,\lambda_n$. We have a commutative diagram 
\[
\xymatrix{
\fX^{\tw}\ar[r]^{ev_i}\ar[d] & I\ccX\ar[d]^{I\psi_{\ccE}}\\
\fB^{\tw}\ar[r]^{ev_i} & IBGL_r.
}
\]
This shows that the image of each component $\fX^{\tw}(\lambda_1,\dots,\lambda_n)$ lies in $\fB^{\tw}(a(\underline{\lambda}))$. 

The relative perfect obstruction theory in \cref{potXtwBtw} restricts to the substacks $\fX^{\tw}(\lambda_1,\dots,\lambda_n)\to\fB^{\tw}(a(\underline{\lambda}))$, which each have constant relative virtual dimension

\begin{align*}
d^{\vir}_{\fX^{\tw}(\lambda_1,\dots,\lambda_n)/\fB^{\tw}a(\underline{\lambda})}&=\chi(f^*A_{\ccE})\\
&= (\dim(\ccX)+r^2)(1-g)+\deg(f^*A_{\ccE})-\sum_{i=1}^n \age_{\Sigma_i}(f^*A_{\ccE})\\
&= (\dim(\ccX)+r^2)(1-g)+\deg(f^*T_{\ccX})-\sum_{i=1}^n \age(\ccX_{\lambda_i})+\age_{\ccX_{\lambda_i}}(\End(\ccE))\\
&= d^{\vir}_{\fX^{\tw}(\lambda_1,\dots,\lambda_n)/\fM^{\tw}}+\dim(\fM^{\tw})-\dim(\fB^{\tw}(a(\underline{\lambda}))).
\end{align*}
where the second line follows by the short exact sequence defining $A_{\ccE}$ and the last by the observation that 
\[
\age_{\ccX_{\lambda_i}}(\End(\ccE))=\age_{\Sigma_i}(\End(\ccE))
\]
is the same age appearing in \cref{dimflag}.
So the stacks $\fX^{\tw}(\lambda_1,\dots,\lambda_n)$, with either of the perfect obstruction theories introduced so far, have absolute virtual dimension
\[
d^{\vir}_{\fX^{\tw}(\lambda_1,\dots,\lambda_n)}=(\dim(\ccX)-3)(1-g)+\deg(f^*T_{\ccX})-\sum_{i=1}^n (\age(\ccX_{\lambda_i})-1).
\]

Then define
\[
[\fX^{\tw}/\fB^{\tw}]^{\vir}=\sum_{(\lambda_1,\dots,\lambda_n)}[\fX^{\tw}(\lambda_1,\dots,\lambda_n)/\fB^{\tw}]^{\vir}\in A_*(\fX^{\tw}).
\]
We know that the cycles $[\fX^{\tw}(\lambda_1,\dots,\lambda_n)/\fM^{\tw}]^{\vir}$ and $[\fX^{\tw}(\lambda_1,\dots,\lambda_n)/\fB^{\tw}]^{\vir}$ have the same virtual dimension, it remains to check that they agree in the same way as \cref{potXcompare}.

\begin{proposition} With the definitions above we have 
\[
[\fX^{\tw}(\lambda_1,\dots,\lambda_n)/\fB^{\tw}]^{\vir}=[\fX^{\tw}(\lambda_1,\dots,\lambda_n)/\fM^{\tw}]^{\vir}\in A_{d^{\vir}_{\fX^{\tw}(\lambda_1,\dots,\lambda_n)}}( \fX^{\tw})
\]
for each set of indices $(\lambda_1,\dots,\lambda_n)$, so 
\[
[\fX^{\tw}/\fM^{\tw}]^{\vir}=[\fX^{\tw}/\fB^{\tw}]^{\vir}\in A_*(\fX^{\tw}).
\]

\end{proposition}

\begin{proof}
The proof is as in \cref{potXcompare}, replacing $\fX$ by $\fX^{\tw}(\lambda_1,\dots,\lambda_n)$ and $\fM$ and $\fB$ by their respective twisted versions. 
\end{proof}

\subsubsection{Moduli of maps to $\ccX$ with fields}
Let
\[
\fX^{\ccE} ={\Spec}_{\fX^{\tw}}\Sym \,R^1\pi_{\fX^{\tw}*}(ev_{\fX^{\tw}}^*\ccE)
\]

be the moduli of genus $g$, $n$-pointed, degree $\beta$ twisted maps to $\ccX$ with a $p$-field. In the notation of \cite[Section 2]{CL}, this is the direct image cone
\[
\fX^{\ccE} =C\left(\pi_{\fX^{\tw}*}\left(ev_{\fX^{\tw}}^*\ccE^{\vee}\otimes \omega_{\pi_{\fX^{\tw}}}\right)\right)
\] 
 Objects of $\fX^{\ccE}(\bullet)$ are triples $[(C,\Sigma_1,\ldots,\Sigma_n), f, p]$ where $[(C,\Sigma_1,\ldots,\Sigma_n), f]$ is an object in $\fX^{\tw}(\bullet)$ and
$p\in H^0(C,f^*\ccE^\vee\otimes\omega_C)$. 

From its definition, we see that $\fX^{\ccE}$ has a dual relative perfect obstruction theory over $\fX^{\tw}$ given by 
\[
\bE_{\fX^{\ccE}/\fX^{\tw}}^\vee = R^\bullet\pi_{\fX^{\ccE} *}\left(ev_{\fX^{\ccE}}^*\ccE^{\vee}\otimes \omega_{\pi_{\fX^{\ccE}}}\right).
\]
We can similarly decompose $\fX^{\ccE}$ into 
\[
\fX^{\ccE}=\bigsqcup_{(\lambda_1,\dots,\lambda_n)}\fX^{\ccE}(\lambda_1,\dots,\lambda_n)
\]
 with each component being the preimage of $\fX^{\tw}(\lambda_1,\dots,\lambda_n)$.
Then the relative virtual dimension of the graded pieces is constant
\begin{align*}
d^{\vir}_{\fX^{\ccE}(\lambda_1,\dots,\lambda_n)/\fX^{\tw}(\lambda_1,\dots,\lambda_n)}&=\chi(f^*\ccE^{\vee}\otimes\omega_C)\\
&=-\chi(f^*\ccE)\\
&=r(g-1)-\deg(f^*\ccE)+\sum_{i=1}^n\age_{\ccX_{\lambda_i}}(\ccE)
\end{align*}
The absolute virtual dimension is then
\[
d^{\vir}_{\fX^{\ccE}(\lambda_1,\dots,\lambda_n)}=(\dim(\ccX)-r-3)(1-g)+\deg(f^*T_{\ccX})-\deg(f^*\ccE)-\sum_{i=1}^n\age_{\ccX_{\lambda_i}}(T_{\ccX})-\age_{\ccX_{\lambda_i}}(\ccE)-1.
\]
For the comparison results, we will need to consider the relative perfect obstruction theory of $\fX^\ccE\to\fB^{\tw}$. The situation is analogous to \cref{ss:fields}, so we have $\phi_{\fX^\ccE/\fB^{\tw}}:\bbE_{\fX^\ccE/\fB^{\tw}}\to\bbL_{\fX^\ccE/\fB^{\tw}}$ where
\[
\bbE_{\fX^\ccE/\fB^{\tw}}=\left(R^\bullet\pi_{\fX^\ccE*}\left(ev_{\fX^\ccE}^*A_\ccE\oplus ev_{\fX^\ccE}^*\cE^\vee\otimes\omega_{\pi_{\fX^\ccE}}\right)\right)^{\vee}.
\]

\subsubsection{The cosection}\label{sec:twcos}
The section $s$ induces a morphism $\ccW$ over $\fB^{\tw}$:
\[
\xymatrix{
\fX^{\ccE}\ar[r]^{\ccW}\ar[dr]& \ffH^{\tw}\ar[d]\\
& \fB^{\tw}}
\]
The local description of $\ccW$ is
\[
[(C,\Sigma_1,\dots,\Sigma_n),f,p]\mapsto[(C,\Sigma_1,\dots\Sigma_n), p\otimes f^*s]
\]
 where $p\otimes f^*s\in H^0(C,f^*\ccE\otimes f^*\ccE^{\vee}\otimes\omega_C)$ is interpreted as a section of $\omega_C$. Globally, this morphism comes from the restriction of the morphism between direct image cones.
The relative differential of this morphism is
\begin{equation}\label{dW}
d\ccW:\bbT_{\fX^{\ccE}/\fB^{\tw}}\to\ccW^*T_{\ffH/\fB^{\tw}}.
\end{equation}
On the other hand, we have a perfect obstruction theory $\phi_{\ffH^{\tw}/\fB^{\tw}}$ for $\ffH^{\tw}\to\fB^{\tw}$ given by
\[
\bbE_{\ffH^{\tw}/\fB^{\tw}}=\left(R^{\bullet}\pi_{\ffH^{\tw}*}\omega_{\pi_{\ffH^{\tw}}}\right)^{\vee}.
\]
Here the virtual dimension is $g-1$ because the obstruction sheaf, which here is just a trivial line bundle, is too large. 

We can view $\ccW$ as inducing a morphism (c.f. \cref{cosecXEB}):
\begin{equation}\label{deltaW}
\delta \ccW: \bbE_{\fX^{\ccE}/\fB^{\tw}}^{\vee}\to\ccW^*\bbE_{\ffH^{\tw}/\fB^{\tw}}^{\vee}.
\end{equation}
The relative cosection is
\[
\sigma_{\fX^{\ccE}/\fB^{\tw}}=h^1(\delta\ccW):\Ob_{\fX^{\ccE}/\fB^{\tw}}\to\ccO_{\fX^{\ccE}}.
\]
The cosection factors as an absolute cosection
\[
\sigma_{\fX^{\ccE}}:\Ob_{\fX^{\ccE}}\to\cO_{\fX^{\ccE}}
\]
by the observation that \cref{dW} and \cref{deltaW} commute with $\phi_{\fX^{\ccE}/\fB^{\tw}}^{\vee}$ and $\phi_{\ffH^{\tw}/\fB^{\tw}}^{\vee}$. Then $h^1(\phi_{\fX^{\ccE}/\fB^{\tw}}^{\vee}\circ\delta\ccW)=0$, which is a sufficient condition for the cosection to factor by \cref{ss:lift}.

\begin{proposition}\label{kjgt}
The relative cosection $\sigma_{\fX^{\ccE}/\fB^{\tw}}$ has local expression
\begin{align*}
\sigma_{\fX^{\ccE}/\fB^{\tw}}|_{\xi}: & H^1(f^*A_\ccE)\oplus H^1(f^*\ccE^\vee\otimes\omega_C)\to\bC\\
&(\dot{z},\dot{p}) \mapsto \langle f^*\delta s(\dot{z}), p\rangle +\langle f^*s,\dot{p}\rangle
\end{align*}
for $\xi=[(C,\Sigma_1,\dots\Sigma_n),f,p]$.
\end{proposition}

\subsubsection{Moduli of maps to \ccZ}
Let $i:\ccZ\to\ccX$ and consider the moduli of twisted stable maps to $\ccZ$ with sections of the marked gerbes of degrees $\beta'$ such that $|i|_*\beta'=\beta\in H_2(X,\bbZ)$ for $|i|:Z\xhookrightarrow{}X$. 
\[
\fZ^{\tw}=\bigsqcup_{\beta'}\fM_{g,n}(\ccZ,\beta')\xrightarrow{\iota}\fX^{\tw}.
\]
There are universal evaluation maps at the markings
\[
ev_i:\fZ^{\tw}\to I\ccZ
\]
and we can decompose it into open and closed substacks
\[
\fZ^{\tw}(\lambda_1,\dots,\lambda_n)=\bigcap_{i=1}^nev_i^{-1}(\ccZ_{\lambda_i}).
\]
And $\iota:\fZ^{\tw}\to\fX^{\tw}$ restricts to $\fZ^{\tw}(\lambda_1,\dots,\lambda_n)\to\fX^{\tw}(\lambda_1,\dots,\lambda_n)$. Let $\ccN=i^*\ccE$, fixing this bundle gives $\fZ^{\tw}\to\fB^{\tw}$ with perfect obstruction theory $\phi_{\fZ^{\tw}/\fB^{\tw}}$
\[
\bE_{\fZ^{\tw}/\fB^{\tw}}=\left(R^{\bullet}{\pi_{\fZ^{\tw}*}}ev_{\fZ^{\tw}}^*(A_{\ccN})\right)^{\vee}.
\]
The virtual dimension is constant on each $\fZ^{\tw}(\lambda_1,\dots,\lambda_n)$ and equal to the one coming from the familiar perfect obstruction theory $\bE_{\fZ^{\tw}/\fB^{\tw}}=(R^{\bullet}{\pi_{\fZ^{\tw}*}}ev_{\fZ^{\tw}}^*T_{\ccZ})^\vee$, it is given by

\[
d^{\vir}_{\fZ^{\tw}(\lambda_1,\dots,\lambda_n)}=\dim(\ccZ)(1-g)+\deg(f^*T_{\ccZ})-\sum_{i=1}^n\age_{\ccZ_{\lambda_i}}(T_{\ccZ})-1.
\]
All this follows from the discussion in \cref{curvestoX} and \cref{curvestoX/B}. The short exact sequence of sheaves on $\ccZ$
\[
0\to T_{\ccZ}\to i^*T_{\ccX}\to i^*\ccE\to 0
\]
implies that
\[
d^{\vir}_{\fZ^{\tw}(\lambda_1,\dots,\lambda_n)}=d^{\vir}_{\fX^{\ccE}(\lambda_1,\dots,\lambda_n)}.
\]
From \cref{kjgt} we see that
\begin{proposition}
The cosection $\sigma_{\fX^{\ccE}/\fB^{\tw}}$ restricted to the substack $\fX^{\ccE}(\lambda_1,\dots,\lambda_n)$ has degeneracy locus
\[
\fZ^{\tw}(\lambda_1,\dots,\lambda_n).
\]
\end{proposition}
We then have two virtual fundamental classes for ${\fZ^{\tw}(\lambda_1,\dots,\lambda_n)}$ of the same degree. The usual virtual fundamental class
\[
[\fZ^{\tw}(\lambda_1,\dots,\lambda_n)]^{\vir}:=[\fZ^{\tw}(\lambda_1,\dots,\lambda_n)/\fB^{\tw}]=[\fZ^{\tw}(\lambda_1,\dots,\lambda_n)/\fM^{\tw}]
\]
and the cosection-localized class of $\fX^{\ccE}(\lambda_1,\dots,\lambda_n)$
\[
[\fX^{\ccE}(\lambda_1,\dots,\lambda_n)]^{\vir}_{\sigma_{\fX^{\ccE}}}.
\]

\begin{theorem}\label{mainthmDM}
For each indexing set $(\lambda_1,\dots,\lambda_n)$ we have
\[
[\fX^{\ccE}(\lambda_1,\dots,\lambda_n)]^{\vir}_{\sigma_{\fX^{\ccE}}}=(-1)^{r(1-g)+d-\sum_{i=1}^n\age_{\ccX_{\lambda_i}}(\ccE)}[\fZ^{\tw}(\lambda_1,\dots,\lambda_n)]^{\vir}
\]
\end{theorem}
The proof of this theorem proceeds as usual via degeneration to the normal cone and virtual localization. The arguments go without major modifications, but we summarize the statements below in the notation of this section.

\subsection{Degeneration to the normal cone}
We can consider the total space of the deformation to the normal cone of $\ccZ$ in $\ccX$, namely
 \[
\ccV=Bl_{\ccZ\times \{0\}}(\ccX\times\bA^1)\setminus\widetilde{\ccX\times\{0\}}.
\]
This is a DM stack over $\bA^1$ with generic fiber isomorphic to $\ccX$ and special fiber $\Vb(\ccN)$. Let $\pi_{\ccV/\ccX}$ denote the projection to $\ccX$ and fix the locally-free sheaf $\tilde{\ccE}=\pi_{\ccV/\ccX}^*\ccE$. Over $t\neq 0$, this is the vector bundle $\ccE$ on $\ccX\cong\ccV|_{t}$. Over $0\in\bA^1$, this is $\pi^*\ccN$ over $\Vb(\ccN)$ where $\pi:\Vb(\ccN)\to\ccZ$.
Fix the section $y$ to be $y=t^{-1}s$ over $t\neq 0$ and, over 0, the tautological section given by the identity of $\ccN$. This is a regular section with vanishing locus $Z(\tilde{s})=\ccZ\times\bA^1$ (c.f. \cref{deglocy}). 

\begin{proposition}[The inertia stack of $\ccV$]\label{inertiaV}
The inertia stack of $\ccV$ decomposes into disjoint components
\[
I\ccV=\bigsqcup_{\lambda\in\Lambda}\ccV_{\lambda}
\]
where $\Lambda$ is the same finite indexing set indexing the inertia stack of $\ccX$. For each component, we have a projection $\pi:\ccV_{\lambda}\to\bA^1$ such that $\pi^{-1}(t)=\ccX_{\lambda}$ and $\pi^{-1}(0)=\ccZ_{\lambda}$.
\end{proposition}
\begin{proof}
Let $\pi$ be the composition $I\ccV\to\ccV\to\bA^1$. We claim that the left-hand square in the following diagram is fibered
\[
\xymatrix{
I\ccV\ar[r]&\ccV\ar[r] & \bA^1\\\
I\ccV_t\ar[u]\ar[r] & \ccV_t\ar[u]\ar[r] & t\ar[u].
}
\]
This follows by the fact that the inclusion of the fiber $\ccV_t\to\ccV$ is a fully faithful morphism. Then by \cite[\href{https://stacks.math.columbia.edu/tag/06R5}{Lemma 06R5}]{stacks-project} the left square is fibered. In practice, for a point $x\in\ccV_t$ the only automorphisms of $x\in\ccV$ are those that come from $\ccV_t$. 
\end{proof}

The moduli of twisted stable maps with fields defined by $(\ccV,\tilde{\ccE})$ is $\ccV^{\tilde{\ccE}}$. This is a family over $\bA^1$ with fiber $\ccX^{\ccE}$ over $t\neq0$ and fiber over $0\in\bA^1$ given by
\[
\fN^{\ccN}=\Spec_{\fZ^{\tw}}\Sym(R^1\pi_{\fZ^{\tw}*}ev_{\fZ^{\tw}}^*\ccN\oplus ev_{\fZ^{\tw}}^*\ccN^{\vee}\otimes\omega_{\pi_{\fZ^{\tw}}} ). 
\]
The latter is a cone of virtual dimension $0$ over $\fZ^{\tw}$, with a symmetric perfect obstruction theory. We can decompose it into substacks $\fN^{\ccN}(\lambda_1,\dots,\lambda_n)$ each supported over $\fZ^{\tw}(\lambda_1,\dots,\lambda_n)$.

\begin{lemma}\label{2potsofV}
The moduli of stable maps with fields to $(\ccV,\tilde{\ccE})$ has a perfect obstruction theory relative to $\fB^{\tw}\times\bA^1$ given by
\[
\bbE_{\ffV^{\tilde{\ccE}}/\fB^{\tw}\times\bA^1}=R^{\bullet}\pi_{\ffV^{\tilde{\ccE}}}\left(ev_{\ffV^{\tilde{\ccE}}}^*A\oplus ev_{\ffV^{\tilde{\ccE}}}^*\tilde{\ccE}^{\vee}\otimes\omega_{\pi_{{\ffV^{\tilde{\ccE}}}}}\right)
\]
where $A=T_{\ccV/BGL_r\times\bA^1}$.
\end{lemma}
\begin{proof}
This is exactly as \cref{VrelA} and the discussion in that section.
\end{proof}

The section $y\in\Gamma(\ccV,\tilde{\ccE})$ defines a cosection of the obstruction sheaf. We can introduce a perfect obstruction theory $\bE'_{\fV^{\tilde{\ccE}}/\fB^{\tw}}$ compatible with $\bbE_{\ffV^{\tilde{\ccE}}/\fB^{\tw}\times\bA^1}$, as in \cref{ss:deform}. The cosection lifts as usual, so we obtain an absolute cosection $\sigma_{\fV^{\tilde{\ccE}}}$.
 
Now that we have the perfect obstruction theory and cosection, we can break $\fV^{\tilde{\ccE}}$ into substack of constant virtual dimension to construct the virtual fundamental class. This decomposition is compatible by restriction with the decompositions of $\fX^{\ccE}$ and $\fN^{\ccN}$ because of \cref{inertiaV} and because the evalaution maps commute with the inclusions:
\[\xymatrix{
\fX^{\ccE}\ar[r]^{ev_i} \ar[d]& I\ccX\ar[d]\\
\fV^{\tilde{\ccE}}\ar[r]^{ev_i} & I\ccV.}
\]

Let 
\[
\fV^{\tilde{\ccE}}(\lambda_1,\dots\lambda_n):=\bigsqcup_{i=1}^nev_i^{-1}(I\ccV).
\]
On each such substack, we have a cosection-localized virtual fundamental class
\[
[\fV^{\tilde{\ccE}}(\lambda_1,\dots\lambda_n)]^{\vir}_{\sigma}\in A_*(\fZ(\lambda_1,\dots\lambda_n)\times\bA^1)
\]
which can be constructed using $\bbE_{\ffV^{\tilde{\ccE}}/\fB^{\tw}\times\bA^1}$ and the (absolute) cosection constructed from $y$ following the recipe of \cref{sec:twcos}.
\begin{theorem}\label{definvDMcase}
\[
\iota_0^![\fV^{\tilde{\ccE}}(\lambda_1,\dots\lambda_n)]^{\vir}_{\sigma_{\fV^{\tilde{\ccE}}}}=\iota_1^![\fV^{\tilde{\ccE}}(\lambda_1,\dots\lambda_n)]^{\vir}_{\sigma_{\fV^{\tilde{\ccE}}}}=[\fX^{\ccE}(\lambda_1,\dots\lambda_n)]^{\vir}_{\sigma_{\fX^{\ccE}}}\in A_*(\fZ^{\tw})
\]
\end{theorem}
\begin{proof}
The first equality follows by deformation-invariance of the cosection localized virtual fundamental class, as per \cref{deformationthm}. This requires introducing the two perfect obstruction theories of $\fV^{\tilde{\ccE}}$ and checking their compatibility. The second equality follows by checking that
\begin{enumerate}
\item
 $\iota_1^*\bbE_{\fV^{\tilde{\ccE}}/\fB^{\tw}\times\bA^1}=\bbE_{\fX^{\ccE}/\fB^{\tw}}$
 \item $A\to A_{\tilde{\ccE}}\xrightarrow{\delta y}\tilde{\ccE}$ restricts to $\delta s:A_{\ccE}\to\ccE$ over $\ccV_1$.
 \end{enumerate}
The first point follows from the fact that $A|_{\ccV_1}\cong A_E$ and the second from the observation that at $t=1$, $y=\pi_{\ccV/\ccX}^*s$.
\end{proof}
\subsection{Torus localization}
Finally, to prove \cref{mainthmDM} we need to establish the following result by torus localization:
\[
\iota_0^![\fV^{\tilde{\ccE}}(\lambda_1,\dots\lambda_n)]^{\vir}_{\sigma_{\fV^{\tilde{\ccE}}}}=(-1)^{r(1-g)+d-\sum_{i=1}^n\age_{\ccX_{\lambda_i}}(\ccE)}[\fZ^{\tw}(\lambda_1,\dots,\lambda_n)]^{\vir}
\]

We first identify $\iota_0^![\fV^{\tilde{\ccE}}(\lambda_1,\dots\lambda_n)]^{\vir}_{\sigma}$ with a cosection localized class on $\fN^{\ccN}(\lambda_1,\dots,\lambda_n)$. Let the evaluation map of $\fN^{\ccN}$ to $\ccZ$ be denoted by $\ev$ in what follows, then on any connected component of $\fN^{\ccN}$ the relative perfect obstruction theory to $\fB^{\tw}$ is given by 
\[
\bbE_{\fN^{\ccN/\fB^{\tw}}}=\left( R^\bullet\pi_{\fN^{\ccN}*}\ev^*(A_{\ccN}\oplus\ccN)\oplus ev^*\ccN^{\vee}\otimes\omega_{\pi_{\fN^{\ccN}}} \right)^{\vee}.
\]
A point in $\fN^{\ccN}$ is $\xi=[(C,\Sigma_1,\dots\Sigma_n),f,p,q]$ with $[(C,\Sigma_1,\dots\Sigma_n),f]\in\fZ^{\tw}$ and $p\in H^0(C,f^*\fN^{\vee}\otimes\omega_C)$, $q\in H^0(C,f^*\ccN)$.
The cosection coming from the restriction deformation of $\sigma_{\ffV^{\tilde{\ccE}}/\fB^{\tw}\times\bA^1}$ is just 
\begin{align*}
\sigma_{\fN^{\ccN}/\fB^{\tw}}|_{\xi}&:H^1(C,f^*A_{\ccN})\oplus H^1(C,\ccN))\oplus H^1(C,f^*\ccN^{\vee}\otimes\omega_C)\to \bbC\\
&(\dot{x},\dot{q},\dot{p})\mapsto \langle\dot{p},q\rangle+\langle p,\dot{q}\rangle.
\end{align*}
This cosection is equivariant with respect to the $\bC^*$-action induced on $\fN^{\ccN}$ by rescaling the fibers of $\ccN$ with opposite weights. The torus action also preserves the components $\fN^{\ccN}(\lambda_1,\dots,\lambda_n)$, and $(\fN^{\ccN}(\lambda_1,\dots,\lambda_n))^{\bC^*}=\fZ(\lambda_1,\dots,\lambda_n)$. At the level of perfect obstruction theories, the situation is precisely analogous to \cref{section:localization}. We have $\bE_{\fN^{\ccN}/\fB^{\tw}}^{\fix}|_{\fZ^{\tw}}=\bE_{\fZ^{\tw}/\fB^{\tw}}$ and the induced cosection vanishes. So we only have to compute the equivariant Euler characteristic of the virtual normal bundle
\[
N^{\vir}:=(\bE_{\fN^{\ccN}/\fB^{\tw}}^{\mov}|_{\fZ^{\tw}})^{\vee}=R^{\bullet}\pi_{\fZ^{\tw}*}(ev_{\fZ^{\tw}}^*\ccN\oplus ev_{\fZ^{\tw}}^*\ccN^{\vee}\otimes\omega_{\pi_{\fZ^{\tw}}}).
\]
Computing this at a point $[(C,\Sigma_1,\dots,\Sigma_n),f]$ of $\fZ^{\tw}(\lambda_1,\dots,\lambda_n)$ gives
\[
e_T(N^{\vir})=(-1)^{\chi(f^*N)}.
\]
Let $h=i\circ f:C\to \ccX$, then $\chi(f^*N)=\chi(h^*\ccE)=r(1-g)+d-\sum_{i=1}^n\age_{\ccX_{\lambda_i}}(\ccE)$.
 Then by \cref{cosloc} we have
\[
\iota_0^![\fV^{\tilde{\ccE}}(\lambda_1,\dots\lambda_n)]^{\vir}_{\sigma_{\fV^{\tilde{\ccE}}}}=[\fN^{\ccN}(\lambda_1,\dots,\lambda_n)/\fB^{\tw}]^{\vir}_{\sigma_{\fN^{\ccN}/\fB^{\tw}}}=(-1)^{r(1-g)+d-\sum_{i=1}^n\age_{\ccX_{\lambda_i}}(\ccE)}[\fZ^{\tw}(\lambda_1,\dots,\lambda_n)]^{\vir}.
\]
which together with \cref{definvDMcase} proves \cref{mainthmDM}.

\section{Examples}
As mentioned in the introduction, a big motivation for the introduction of the theory of stable maps with $p$-fields is the failure of the quantum Lefschetz hyperplane principle in higher genus. Interestingly, as observed in \cite{MR3039825}, quantum Lefschetz can fail even for positive orbifold hypersurfaces. The paper gives two explicit examples, which are interesting to revisit in light of the theory we have developed in this chapter. We consider a slightly more minimal ``trivial'' example than the authors, because it exhibits the same behavior. 

\subsection{Trivial example}
 Let $X=\bP[1,2,2]$ and $Z=\bP[2,2]$ the vanishing locus of the section $s=x_1$ of $E=\cO(1)$.
We have $IX=X_0\sqcup X_1$ where $X_0=X$ and $X_1=[\bP^1\times\{-1\}/\mu_2]\cong\bP[2,2]$. Similarly, $IZ=Z\sqcup X_1$.
Let $i:Z\xhookrightarrow{}X$, this coincides with the restriction of the morphism $IX\to X$ to $X_1$.
 Consider the moduli space of genus $0$, $4$-pointed degree $0$ twisted stable maps to $X$ with the condition that the evaluation maps at the marked points lands in $X_1$. In the notation of \cite{MR3039825}, this is $X_{0,\vec{4},0}=\fX^{\tw}(1,1,1,1)=:\fX$. Similarly, we have $ Z_{0,\vec{4},0}:=\fZ$ the moduli space of stable twisted maps to $Z$ of genus 0 and degree 0 with isotropy $\mu_2$ at the marked points. 
As DM stacks, these spaces are isomorphic, smooth, of pure dimension $2$:
\[
\fX\cong \fZ \cong \overline{\ccM}^{\tw}_{0,\vec{4}}\times\bP[2,2]=:M,
\]
where $\overline{\ccM}^{\tw}_{0,\vec{4}}$ is the moduli space of \textit{stable} twisted curves with 4 marked points each with isotropy $\mu_2$. This is because $\beta=0$ implies that the domain curve must itself be stable, so we just need to record the domain curve and a point in the chosen component of the inertia stack. 
The obstruction theories are however different, in fact we get different virtual dimensions $d^{\vir}_{\fX}=1$
whereas $d^{\vir}_{\fZ}=2$.
So it is clear from the get-go that it cannot be the case that $[\fZ]^{\vir}=e(\ccE)\cap[\fX]^{\vir}$ for a vector bundle $\ccE$ on $\fX$. For both moduli spaces, we have that their virtual fundamental class is $[M]\cap e(\Ob)$ with $\Ob$ the appropriate obstruction sheaf. Let the universal families of $\fX$ and $\fZ$ be denoted respectively by $(\ccC_{\fX},\pi,ev)$ and $(\ccC_{\fZ},\pi,ev)$. We have
\[
\Ob_{\fX}=R^1\pi_*(ev^*T_X)
\]
Consider a smooth point of $\overline{\ccM}_{0,\vec{4},0}$, which is an orbifold curve $(C,\Sigma_1,\dots,\Sigma_4)$ with coarse moduli space $|C|=\bP^1$ and four orbifold points $\Sigma_i\cong B\mu_2$. Such a curve, known as the pillowcase, arises by quotienting the a smooth genus 1 curve by $z\mapsto -z$. Note that the dualizing sheaf of $C$ is $\omega_C=\cO_C(\Sigma_1+\Sigma_2-\Sigma_3-\Sigma_4)=\omega_C$, which has degree 0. Let $\xi=[(C,\Sigma_1,\dots,\Sigma_4),f]$ be an object of $\fX$ over this, with for $f:C\to \bP[1,2,2]$ constant with image in $X_1$. 
\[
\Ob_{\fX}|_{\xi}=H^1(C,f^*T_X)
\]
The tangent bundle of $X$ sits in the Euler sequence:
\[
0\to \cO_X\to\cO_X(1)\oplus\cO_X(2)\oplus\cO_X(2)\to T_{X}\to 0
\]
from which we see $H^1(C,f^*T_X)=H^1(C,f^*(\cO_X(1)\oplus\cO_X(2)\oplus\cO_X(2)))$. $f$ has degree 0 and we get 
\[
f^*\cO_X(1)=\omega_C
\]
and
\[
f^*\cO_X(2)=\cO_C.
\]
Then $\Ob_{\fX}|_{\xi}=H^1(C,\omega_C)$. Globally, this is the line bundle $\ccL=R^1\pi_*\ev^*\cO_X(1)$. So the perfect obstruction theory on $\fX$ is
\[
\bE_{\fX}=[\ccL^{\vee}\xrightarrow{0} \Omega_M].
\]
On the other hand, the obstruction sheaf $\Ob_{\fZ}$ is trivial, as we can see by the same computation. 

Now consider the moduli of stable maps with fields $\fX^E$, which parametrizes $\overline{\xi}=[(C,\Sigma_1,\dots,\Sigma_n),f,p]$ with $f:C\to\bP[1,2,2]$ constant as before and 
\[
p\in H^0(C,f^*\cO_X(-1)\otimes\omega_C)=H^0(C,\omega_C^{\otimes 2})=H^0(C,\cO_C).
\]
Globally, $\fX^E$ is the total space of the dual of the obstruction bundle $\ccL=R^1\pi_*ev^*\cO_X(1)$. Let $q:\Vb(\ccL^{\vee})\to M$ be the projection. We choose a perfect obstruction theory on it given by
\[
\bE_{\fX^E}=[q^*\ccL^{\vee} \xrightarrow{0}\Omega_{\Vb(\ccL^{\vee})}]
\]
The section $s$ gives a map $\delta s: \cO_X(1)\oplus\cO_X(2)\oplus\cO_X(2)\to\cO_X(1)$ which is just projection to the first coordinate. We get a cosection
\begin{align*}
\sigma|_{\overline{\xi}}:\Ob_{\fX^E}|_{\overline{\xi}}=H^1(C,\omega_C) \to \cO_{C}\\
\dot{x_1}\mapsto \langle\dot{x_1}, p \rangle
\end{align*}
The degeneracy locus is the zero section $\{p=0\}=M\subset \fX^E$. The cosection gives an isomorphism $\Ob_{\fX^E}=q^*\ccL\to \cO_{\fX^E}(-M)$, the cosection localized virtual class is now just 
\[
[\fX^E]^{\vir}_{\sigma}=-[M]=-[\fZ]^{\vir}
\]
 as we expect from \cref{mainthmDM}.
 Incidentally, this is (the dual of) the situation described in \cite{MR3607000} with $N=\fX^E$. 

\subsection{Non-trivial example}
Let $X=\bP[1,1,1,2,2,2,2]$ and $Z=[1,1,2,2,2]$ be the vanishing locus of $s=x_1+x_7$ as a section of $E=\cO(1)\oplus\cO(2)$. Then $IX=X\sqcup \bP[2,2,2,2]$ and $IZ=Z\sqcup \bP[2,2,2]$. Again consider $\fX:=\fX^{\tw}(1,1,1,1)= \overline{\ccM}_{0,\vec{4},0}\times \bP[2,2,2,2]$ and $\fZ:=\fZ^{\tw}(1,1,1,1)=\overline{\ccM}_{0,\vec{4},0}\times \bP[2,2,2]\xhookrightarrow{\iota} \fX$ with universal families $(\ccC_\fX,\pi,ev)$ and $(\ccC_{\fZ},\pi,ev)$. These are smooth stacks of dimensions respectively 4 and 3. The obstruction sheaves are
\[
\Ob_{\fX}=R^1\pi_*ev^*\cO_X(1)^{\oplus 3}=\cL^{\oplus 3}
\]
and
\[
\Ob_{\fZ}=R^1\pi_*ev^*\cO_Z(1)^{\oplus 2}=\iota^*\cL^{\oplus 2}
\]
so both $\fX$ and $\fZ$ have virtual dimension 1. 
The moduli of stable maps with fields is still the line bundle $q: \fX^E=\Vb(\cL^{\vee})\to \fX$, because $\fX^E=\Spec\Sym(R^1\pi_*ev^*(\cO_X(1)\oplus\cO_X(2)))=\Spec_{\fX}\Sym(\cL)$. The relative perfect obstruction theory we consider is
\[
\bE_{\fX^E/\fX}^{\vee}=R^{\bullet}\pi_*(ev^*E^{\vee}\oplus\omega_{C_{\fX^E}})\cong [q^*\ccL^{\vee}\xrightarrow{0}q^*\cL]
\]
so we see that the absolute perfect obstruction theory
\[
\bE_{ \fX^E}^{\vee}=[T_{ \fX^E}\xrightarrow{0} q^*\cL^{\oplus 4}]
\]
gives the compatibility diagram
\[
\xymatrix{
\bT_{ \fX^E/ \fX}\ar[d]\ar[r] & \bT_{ \fX^E}\ar[d]\ar[r] &  q^*\bT_{ \fX}\ar[d]\ar[r]^{+1} &\\
\bE^{\vee}_{ \fX^E/ \fX}\ar[r] & \bE^{\vee}_{ \fX^E}\ar[r] &  q^*\bE^{\vee}_{ \fX}\ar[r]^{+1} & .
}
\]
The virtual dimension of $ \fX^E$ is 1. 
The local expression for the cosection at $\overline{\xi}=[(C,\Sigma_1,\dots,\Sigma_n),f,p]$ is
\begin{align*}
\Ob_{ \fX^E}|_{\overline{\xi}}: H^0(C,\omega_C)^{\oplus 4} \to \bC\\
(\dot{x_1},\dot{x_2},\dot{x_3},\dot{p})\mapsto \langle \dot{x_1}, p \rangle + \langle \dot{p},f^*s  \rangle
\end{align*}
where $f^*s\in H^0(C,f^*\cO_X(1)\oplus f^*\cO_X(2))=H^0(C,\cO_C)$ is just $f^*x_7$. The degeneracy locus is $\{p=f^*x_7=0\}$, which we can identify with $\overline{\ccM}_{0,\vec{4},0}\times\bP[2,2,2]\subset \overline{\ccM}_{0,\vec{4},0}\times\bP[2,2,2,2]\subset  \fX^E$.  
\cref{mainthmDM} gives is $[ \fX^E]_{\sigma}^{\vir}=(-1)^{\chi(f^*E)}[ \fZ]^{\vir}=[ \fZ]^{\vir}$.

\bibliography{references}
\bibliographystyle{amsplain} 
\end{document}